\newtheorem{thm}{Theorem}[section]
\newtheorem{pro}[thm]{Proposition}
\newtheorem{cor}[thm]{Corollary}
\theoremstyle{definition}
\newtheorem{rem}[thm]{Remark}
\newtheorem{exa}[thm]{Example}
\newtheorem*{xnta}{Notation}
\newtheorem*{xack}{Acknowledgments}
\numberwithin{equation}{section}
\begin{document}
\baselineskip=17pt

\title[]{Spectral properties of a class of random walks on locally finite groups}
\author[A. BENDIKOV, B. BOBIKAU, and CH. PITTET]{ALEXANDER BENDIKOV, BARBARA BOBIKAU, and CHRISTOPHE PITTET}
\address{A. Bendikov, Institute of Mathematics\\
 Wrocław University\\
50-384 Wrocław, Pl. Grunwaldzki 2/4, Poland,}
\email{bendikov@math.uni.wroc.pl} 
\address{B. Bobikau, Institute of Mathematics\\
 Wrocław University\\
50-384 Wrocław, Pl. Grunwaldzki 2/4, Poland,} 
\email{bobikau@math.uni.wroc.pl}
\address{Ch. Pittet, CMI, 39 rue Joliot-Curie, 13453 Marseille cedex 13, Universit\'{e} d'Aix-Marseille I, France,}
\email{pittet@cmi.univ-mrs.fr}

\date{}

\begin{abstract}
We study some spectral properties of random walks on infinite countable amenable groups
with an emphasis on locally finite groups, e.g. the infinite symmetric group $S_{\infty}$.
On locally finite groups, the random walks under consideration are driven by infinite divisible distributions. This allows us to embed our random walks into continuous time L$\acute{\textrm{e}}$vy processes whose heat kernels have shapes similar to the ones of $\alpha$-stable processes.
We obtain examples of fast/slow decays of return probabilities, a recurrence criterion, 
exact values and estimates of isospectral profiles and spectral distributions, formulae and estimates for the escape rates and
for heat kernels.

\end{abstract}
\renewcommand{\thefootnote}{}
\footnotetext{Research of A. Bendikov and B. Bobikau was
supported by the Polish Government Scientific Research Fund, Grant
N N201 371736. Ch. Pittet was supported by the CNRS.}
\subjclass[2000]{60-02, 60B15, 62E10, 43A05.}
\keywords{Random walk, locally
finite group, ultra-metric space, infinite divisible distribution, Laplace
transform, K$\ddot{\textrm{o}}$hlbecker transform, Legendre
transform, return probability, spectral distribution, isospectral profile, heat kernel.}

\maketitle

\tableofcontents
\renewcommand{\thefootnote}{\fnsymbol{footnote}}
\section{Introduction}

We apply methods from analysis (Laplace, K$\ddot{\textrm{o}}$hlbecker,  and Legendre transforms) and from geometric group theory (volume growth, iso\-pe\-ri\-metic and iso\-spectral inequalities)  to study the spectral properties of random walks on countable groups.
Most of the results are about groups which are \emph{not} finitely generated.
All the random walks we consider are symmetric and invariant under left translations.

\begin{xnta}
We use the following notation. For two non-negative functions defined on $\mathbb{R}_{+}$ or on a subset $I$ of $\mathbb{R}_{+}$ which is either a neighborhood of zero or of the infinity, we write:
\begin{itemize}
\item $f \asymp g$ if there are constants $ a_1,
a_2> 0$ such that \\ $\forall
x\in I$, $a_1 f(x) \leq g(x) \leq a_2 f(x)$,
(factor equivalence)
\item $f \stackrel{d}{\simeq} g$ if there are constants $b_1,
b_2> 0$ such that\\ $\forall
x\in I$, $f(b_1x) \leq g(x) \leq  f(b_2x)$,
(dilatational equivalence)
\item  $f \stackrel{d}{\asymp} g$ if there are constants $a_1,
a_2,b_1,b_2> 0$ such that \\$\forall
x\in I$, $a_1 f(b_1x) \leq g(x) \leq a_2
f(b_2x)$.
\end{itemize}
We also use the standard notation:
\begin{itemize}
\item $f \sim g$ at $a$ if $ f(x)/g(x)\to 1$ at $a$.
\end{itemize}

\end{xnta}

An early result by Kesten \cite{Kes} is 
that a countable group $G$ is amenable if and only if the 
spectral radius of the Markov operator associated to any symmetric irreducible 
random walk on $G$ equals $1$. In other words, \emph{a countable group is non-amenable if and only if the return probabilities of any symmetric irreducible random walk on it decay exponentially fast as time goes to infinity.}
(This equivalence generalizes to  locally compact groups \cite{Ber}.)
In Theorem \ref{cor1} , we prove that \emph{any locally compact unimodular non-compact (hence any countable infinite) amenable  group carries an irreducible symmetric random walk whose return probability decay is faster than any given sub-exponential function.} 

Recurrence criteria for random walks on countable infinitely generated groups have attracted much attention (see Chapter \ref{sec:rec} below for a short description of contributions by Brofferio and Woess \cite{Bro}, Darling and Erd\"os \cite{Dar}, Dudly \cite{Dud}, Flato and Pitt \cite{Fla}, Kasymdzhanova \cite{Kas}, Molchanov and Nabil \cite{Mol}, Revuz \cite{Rev}, Spitzer \cite{Spi}). Recall that a group is \emph{locally finite} if any of its finite subset generates a finite subgroup. Obviously, a group is countable locally finite  if and only if it is a countable increasing union of finite subgroups. Any such group is amenable. It is
not finitely generated if and only if the union is infinite and strictly increasing.  A typical example is the group $S_{\infty}$ of permutations of the integers with finite supports. Lawler \cite{Law} has obtained a sufficient condition for recurrence on countable locally finite groups (see Proposition \ref{pro7} below and  \cite[Proposition 1]{Bro}). In Proposition \ref{pro3.5}, we prove that \emph{Lawler's condition is also necessary for measures which are  convex linear combinations of idempotent measures.}

On an infinite finitely generated group, any symmetric irreducible random walk returns to the origin at time $t$ with probability at most $t^{-1/2}$ (up to a constant rescaling factor).
In other words, \emph{the slowest possible decay of return probabilities on an infinite finitely
generated groups, is the one of the simple random walk on $\mathbb Z$.} Let us define
the isospectral profile and indicate how to prove this fact (well known to the experts). 
Let $\mu$ be a symmetric irreducible probability measure on a countable group $G$
and let $P(f)=f*\mu$ be the corresponding  right convolution operator on $L^2(G)$.
The associated Laplace operator $\Delta=P-I$ is bounded, self-adjoint and $-\Delta$ is positive.
The \emph{isospectral profile} (also called the \emph{$L^2$-isoperimetric profile}) of $(G,\mu)$
is the function 
$$\Lambda(v)=\min_{|\Omega|\leq v}\lambda_1(\Omega),$$
where the minimum is taken over all finite subsets of $G$ of
cardinality less or equal to $v$. Here,
 $$\lambda_1(\Omega)=\min_{supp(f)\subset\Omega}\frac{(-\Delta f,f)}{\|f\|_2^2}.$$
We  first prove the above $t^{-1/2}$-bound in the case of the simple random walk on a Cayley graph of an infinite finitely generated group $G$. As $G$ is infinite, any finite subset of $G$ has at least one boundary point. A straightforward application of Cheeger's inequality \cite[Theorem 2.3]{Dod} implies that the isospectral profile of $G$ 
satisfies (up to a constant factor)
$$\Lambda(v)\geq v^{-2}, \forall v\geq 1.$$
The existence of a constant $C>1$, such that
$$\mu^{*t}(e)\leq Ct^{-1/2}, \forall t\geq 1,$$
then follows from \cite[Proposition V.1]{E}. The general case of a symmetric irreducible
probability measure $\nu$ on $G$ reduces to the case of a simple random walk $\mu$ because the irreducibility of $\nu$ obviously implies that we can choose $\mu$ with $supp(\mu)\subset supp(\nu)$.
Hence the Dirichlet form of $\nu$ 
$$(-\Delta_{\nu}f,f)=\frac{1}{2}\sum |f(x)-f(y)|^2\nu(x^{-1}y),$$
is bounded below  by the
Dirichlet form of $\mu$ times the non-zero constant factor 
$$\min_{g\in supp(\mu)}\nu(g)/\mu(g).$$
In contrast with finitely generated groups, \emph{there is no slowest decay of return probabilities on countable groups which are not finitely generated:} in Proposition \ref{cor3} we construct, on any infinite countable locally finite group, random walks with return probabilities whose decay is slower than any given positive function which goes to zero as time goes to infinity. 

On a  finitely generated group it is usually difficult (if not hopeless) to find an irreducible probability measure and to compute \emph{exact} values
of its isoperimetric profile, its spectral
density, or of the heat kernel driven by this measure. If $G$ is an infinite countable locally finite group,
we can choose finite subgroups $G_0\subset G_1\subset\cdots$ of $G$ such that 
$$G=\bigcup_{k\geq 0}G_k.$$ 
Any probability measure $\mu$ on $G$ can be represented as a convex linear combination
\begin{equation*}
\mu=\sum_{k=0}^{\infty} c_k \mu_k,
\end{equation*}
of probabilities $\mu_k$, each of which is supported by the finite subgroup $G_k$
(see Proposition \ref{pro3.1}).
A natural choice for $\mu_k$ is the homogeneous probability measure on $G_k$ (i.e. the normalized Haar measure of $G_k$): 
$$\mu_k=m_{G_k}.$$ 
The above representation of
$\mu$ as a series of Haar measures is convenient for computations because
if $\nu$ is a measure on a finite group $H$, then
$$\nu*m_H=m_H*\nu=m_H.$$

This in turn implies that \emph{$\mu$ is infinite divisible and  can be embed in a weakly continuous convolution semi-group $(\mu_t)$ of probability measures on $G$} (see Proposition \ref{proposit3.2}). It allows us
to compute explicitly a spectral resolution 
of the Laplace operator
$\Delta_{\mu}$ (Proposition \ref{pro4.1}), and to compute exact values of the isospectral 
profile $\Lambda_{\mu}$ (Theorem \ref{thm5.1} and Proposition \ref{proposit6.3}). 

Let $P$ be a  right convolution operator defined by a symmetric probability measure on a countable group.
Let $\lambda \to E_{\lambda}$ be the spectral resolution of the
Laplacian $-\Delta=I-P$, 
\begin{align*}
-\Delta=\int_0^{\infty} \lambda dE_{\lambda}.
\end{align*}
Let $\delta_e$ denote the characteristic function of the identity element $e\in G$. We define the spectral distribution function $\lambda \to
N(\lambda)$ as
\begin{align*}
N(\lambda):=(E_{\lambda} \delta_e, \delta_e).
\end{align*}
The asymptotic behavior of $N(\lambda)$, for $\lambda$ close to zero, for Laplace operators associated to simple random walks on 
finitely generated virtually nilpotent groups,
can be deduced from Varopoulos results  \cite[Lemma 2.46]{Lue}.
On finitely generated groups, and under some regularity assumption, \emph{the spectral distribution and the isospectral profile
are related by the formula}
$$N(\lambda)\stackrel{d}{\simeq}\frac{1}{\Lambda^{-1}(\lambda)},$$
(see \cite{BPSa}).
In the case of infinite countable locally finite groups (they are of course never finitely generated), if the decay of the coefficients
$c_k$ of the series of Haar measures $(\mu_k)$
$$\mu=\sum_{k=0}^{\infty} c_k \mu_k,$$
is not too fast,
namely, if  
there exists $\epsilon>0$, such that for all $n\in\mathbb N$,
	\[
		\sum_{k>n}c_k\geq\epsilon c_n,
	\]
then \emph{the same formula holds true} (see Proposition \ref{pro5} and Theorem \ref{pro4}).

In Section \ref{heat}, we define a  left-invariant ultra-metric $\rho$, associated to the random walk $\{X(n)\}$ driven by the measure $\mu$, whose balls are the subgroups $G_k$ and their left-cosets.  \emph{The ultra-metric $\rho$ and the computations of the functions $\Lambda(v)$ and $N(\lambda)$ (see Sec. 6 and 7) become crucial tools in estimating the rate of escape of the random walk $\{X(n)\}$ as well as its heat kernel/transition function,} see Proposition~\ref{pro8.4}, Proposition \ref{pro8.6}, Corollary \ref{cor8.7}, Example \ref{ex8.9} and Remark~\ref{rem8.10}.

\section{Convolution powers on unimodular groups.}

Let $(G,m)$ be a locally compact, non-compact, unimodular group endowed with a left Haar measure $m$.
Let $\{G_{k}\}_{k\in \mathbb{N}}$ be an increasing sequence of Borel subsets of $G$ such that
$|G_{k}|:=m(G_{k})\to \infty$ as $k\to \infty$.

With any sequence of positive reals $c=(c_{k})_{k\in \mathbb{N}}$ such that $\sum_{k\in \mathbb{N}}c_{k}=~1$ we associate the function
\begin{align*}
x \to M(x)=\sum_{k\in \mathbb{N}}\frac{c_{k}}{|G_{k}|} \textbf{1}_{G_{k}}(x).
\end{align*}

Evidently $B \to M(B) = \int_B M dm$ is a probability measure on~$G$. Assume that all $G_k$ are symmetric
and $\bigcup_k G_k$ generates a dense subgroup
of ${G}$. Then $M$ is symmetric and $supp \, M$ generates a dense subgroup of $G$.
Let $\mathfrak{L}_{M}: L^2 \to L^2$ be the
corresponding right-convolution operator $h\to h*M$. In general,
$\Vert\mathfrak{L}_{M}\Vert_{L^2\to L^2} \leq 1$ and it is equal
to $1$ if and only if the group ${G}$ is amenable. On the other
hand, let $\{X_k\}$ be i.i.d. on ${G}$ with law
$\mathbb{P}_{X_1}=M$ and let $S_n=X_1\cdot X_2 \cdot \dots~\cdot
X_n$ be the corresponding random walk on ${G}$. According to
\cite{Ber} the following characterization of $S_n$ via the norm of
the convolution operator $\mathfrak{L}_{M}$ holds: For all
relatively compact neighborhoods $V$ of the neutral element $e\in
{G}$,
\begin{align*}
\lim_{n \to \infty} \mathbb{P}(S_{2n}\in
V)^{1/2n}=\Vert\mathfrak{L}_{M}\Vert_{L^2\to L^2}.
\end{align*}
In particular, if ${G}$ is amenable
$\Vert\mathfrak{L}_{M}\Vert_{L^2\to L^2}=1$ and therefore
\begin{align*}
\mathbb{P}(S_{2n}\in V)=\exp(-n\cdot o(1)) \qquad \textrm{as}
\quad n \to \infty.
\end{align*}
If the group ${G}$ is not amenable, then
$\Vert\mathfrak{L}_{M}\Vert_{L^2\to L^2} <1$. This implies that
the decay at infinity of the function $n\to \mathbb{P}(S_{2n} \in
V)$ is always exponential.

\textit{We claim that for any non-compact unimodular  amenable
group ${G}$, the decay of the function $n\to \mathbb{P}(S_{2n} \in
V)$ can be made as close as possible to the exponential one by an
appropriate choice of the probability measure
$M =\mathbb{P}_{X_1}$.} 

Observe that $M^{*n}(V) \leq \Vert M^{*n}\Vert_{\infty} m(V)$, hence to prove our claim it is enough to estimate the decay of the function $n \to \Vert M^{*n}\Vert_{\infty} $.
We denote $\sigma_{k}=\sum_{i \leq k} c_{i}$ and
$\sigma(k)=1-\sigma_{k}$. Let $\lambda \to N(\lambda)$ be a
right-continuous, non-decreasing step-function $ \mathbb{R} \to \mathbb{R}_{+} $ having jumps at
the points $\lambda_{k}=\sigma(k)$ and taking values at these
points $N(\lambda_{k})=1/|G_{k}|$. Notice that $\lambda \to
N(\lambda)$ must be continuous at $\lambda=0$ and $N(\lambda)=0$
for $\lambda \leq 0$. Following an idea of Saloff-Coste \cite{Sal}, we prove the following statement. 
\begin{pro} \label{pro0}
In the notation introduced above the following inequality holds:
\begin{align*}
\parallel M^{*n} \parallel_{\infty} \leq \int_{\mathbb{R}_{+}} e^{-n\sigma}dN(\sigma), \quad n \in \mathbb{N}.
\end{align*}
\end{pro}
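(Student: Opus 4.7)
Following Saloff-Coste's idea, I would compute $\|M^{\ast n}\|_\infty$ directly by expanding the convex combination $M = \sum_k c_k \mu_k$, where $\mu_k := \mathbf{1}_{G_k}/|G_k|$ has $\|\mu_k\|_1 = 1$ and $\|\mu_k\|_\infty = 1/|G_k|$. Writing
\begin{equation*}
M^{\ast n}(x) \,=\, \sum_{k_1,\ldots,k_n} c_{k_1}\cdots c_{k_n}\,(\mu_{k_1}\ast \cdots \ast \mu_{k_n})(x),
\end{equation*}
I would control each $n$-fold convolution by the iterated Young inequality $\|f\ast g\|_\infty \leq \|f\|_1 \|g\|_\infty$ (valid on the unimodular group $G$). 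Placing the $L^\infty$-factor at any chosen index $j$ gives $\|\mu_{k_1}\ast\cdots\ast\mu_{k_n}\|_\infty \leq 1/|G_{k_j}|$, and optimizing over $j$ yields
\begin{equation*}
\|\mu_{k_1}\ast\cdots\ast\mu_{k_n}\|_\infty \,\leq\, \frac{1}{|G_{\max_j k_j}|},
\end{equation*}
since $|G_k|$ is increasing in $k$.

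Introducing i.i.d.\ integers $K_1,\ldots,K_n$ with law $\mathbb{P}(K_i=k)=c_k$, and using the convention $\sigma_{-1}=0$, the previous step yields
\begin{equation*}
\|M^{\ast n}\|_\infty \,\leq\, \sum_{m\geq 0}\frac{\mathbb{P}(\max_j K_j=m)}{|G_m|} \,=\, \sum_{m\geq 0}\frac{\sigma_m^n-\sigma_{m-1}^n}{|G_m|}.
\end{equation*}
An Abel summation rearranges this into $\sum_{m\geq 0}\sigma_m^n\,\bigl(1/|G_m|-1/|G_{m+1}|\bigr)$: the boundary term $\sigma_M^n/|G_M|$ vanishes as $M\to\infty$ because $|G_M|\to\infty$ while $0\le\sigma_M^n\le 1$, and the left boundary term vanishes thanks to $\sigma_{-1}=0$. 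Using $\sigma_m = 1-\lambda_m$ and the elementary estimate $(1-\lambda)^n \leq e^{-n\lambda}$ yields
\begin{equation*}
\|M^{\ast n}\|_\infty \,\leq\, \sum_{m\geq 0} e^{-n\lambda_m}\Bigl(\frac{1}{|G_m|}-\frac{1}{|G_{m+1}|}\Bigr).
\end{equation*}
Since the right-continuous step function $N$ has jump $1/|G_m|-1/|G_{m+1}|$ exactly at $\sigma=\lambda_m$, the right-hand side is the Stieltjes integral $\int_{\mathbb{R}_+}e^{-n\sigma}\,dN(\sigma)$, which is the claim.

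The only delicate step is the iterated Young inequality in the possibly non-abelian setting: one needs to verify that $\|f_1\ast\cdots\ast f_n\|_\infty \leq \|f_j\|_\infty\prod_{i\neq j}\|f_i\|_1$ for every choice of index $j$, which follows by associativity from the two-fold inequalities $\|f\ast g\|_\infty\leq \|f\|_1\|g\|_\infty$ and $\|f\ast g\|_1 \leq \|f\|_1\|g\|_1$, both valid on unimodular groups. All remaining steps — the exchange of sup and sum (legitimate since all terms are non-negative), the Abel summation, and the identification with the Stieltjes integral — are routine bookkeeping.
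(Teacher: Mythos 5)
Your proof is correct and follows essentially the same route as the paper: expand $M^{*n}$ over multi-indices, bound each $n$-fold convolution by $1/|G_{\max_j k_j}|$ via the iterated Young inequality on the unimodular group, regroup by the value of the maximum, Abel-sum, and apply $(1-\lambda)^n\le e^{-n\lambda}$. The i.i.d.\ variables $K_1,\dots,K_n$ are just a probabilistic rephrasing of the paper's identity $\sum_{\max k_j=m}c_{k_1}\cdots c_{k_n}=\sigma_m^n-\sigma_{m-1}^n$, so there is no substantive difference.
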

\begin{proof}
Observe that, since $G$ is unimodular, for any two functions $f$ and $g$ we have
\begin{align*}
\parallel f*g \parallel_{\infty} \leq \parallel f \parallel_{1} \parallel g\parallel_{\infty}, \quad \parallel f*g\parallel_{\infty}\leq \parallel f\parallel_{\infty} \parallel g\parallel_{1}.
\end{align*}
Hence,
\begin{align*}
\parallel M^{*n} \parallel_{\infty} \leq &
\sum_{k_1, \ldots, k_n} c_{k_{1}} \cdot \ldots \cdot   c_{k_{n}} \parallel \frac{1}{| G_{k_{1}}|}\textbf{1}_{G_{k_{1}}}* \ldots * \frac{1}{| G_{k_{n}}|}\textbf{1}_{G_{k_{n}}}\parallel_{\infty} \leq\\
\leq &
\sum_{k_1, \ldots, k_n} c_{k_{1}} \cdot \ldots \cdot   c_{k_{n}}  \min \left\{\frac{1}{| G_{k_{1}}|}, \ldots, \frac{1}{| G_{k_{n}}|}\right\}=\\
=&\sum_{k_1, \ldots, k_n} c_{k_{1}}\cdot \ldots \cdot   c_{k_{n}} \frac{1}{|G_{\max \{k_1, \ldots, k_n \}}|}=
\end{align*}
\begin{align*}
=&\sum_{k=0}^{\infty}  \frac{1}{|G_k|} \sum_{\max \{k_1, \ldots, k_n \}=k} c_{k_{1}}\cdot \ldots \cdot   c_{k_{n}}=\frac{c_0^n}{|G_0|}+\\
+&\sum_{k=1}^{\infty}  \frac{1}{|G_k|} \left( \sum_{k_1, \ldots, k_n \leq k}c_{k_{1}}\cdot \ldots \cdot   c_{k_{n}} -   \sum_{k_1, \ldots, k_n \leq k-1}c_{k_{1}}\cdot \ldots \cdot   c_{k_{n}}   \right)=\\
=&\frac{c_0^n}{|G_0|}+\sum_{k=1}^{\infty}  \frac{1}{|G_k|} \left( (c_0 + \ldots + c_{k})^n-(c_0 + \ldots + c_{k-1})^n \right)=\\
=&\sum_{k=0}^{\infty} \frac{\sigma_{k}^{n}}{|G_k|}- \sum_{k=1}^{\infty} \frac{\sigma_{k-1}^{n}}{|G_k|}=\sum_{k=0}^{\infty}\sigma_{k}^{n} \left(\frac{1}{|G_k|}-\frac{1}{|G_{k+1}|} \right)=
\end{align*}
\begin{align*}
=&\sum_{k=0}^{\infty} \left( 1-\sigma(k)\right)^n \left(\frac{1}{|G_k|}-\frac{1}{|G_{k+1}|} \right)=\int_{\mathbb{R}_{+}} \left( 1-\sigma \right)^n dN(\sigma) \leq \\
\leq & \int_{\mathbb{R}_{+}} e^{-n \sigma}dN(\sigma) .
\end{align*}
The proof is finished.
\end{proof}

We write $N=e^{-\mathcal{M}}$ and introduce the following  auxiliary transforms.
\begin{itemize}
\item The K$\ddot{\textrm{o}}$hlbecker transform of $\mathcal{M}$:
\begin{align*}
\mathcal{K}(\mathcal{M})(x):=-\log\left(
\int_0^{\infty}e^{-xt}de^{-\mathcal{M}(t)}\right).
\end{align*}
\item The Legendre transform of $\mathcal{M}$:
\begin{align*}
\mathcal{L}(\mathcal{M})(x):= \inf_{\tau > 0}\{x\tau +
\mathcal{M}(\tau)\}.
\end{align*}
\end{itemize}
Clearly, $\mathcal{L}(\mathcal{M}): \mathbb{R}_{+} \to \mathbb{R}_{+}$ is a non-decreasing, concave function. In particular, it is continuous. For any non-decreasing, continuous function $\mathcal{F}: \mathbb{R}_{+} \to \mathbb{R}_{+}$, we define the conjugate Legendre transform of~$\mathcal{F}$ as follows:
\begin{align*}
\mathcal{L}^*(\mathcal{F})(x):=\sup_{\tau > 0}\{-x\tau +
\mathcal{F}(\tau)\}.
\end{align*}

\begin{pro} \label{thm1} With the above notation, the following properties hold true.
\begin{itemize}
\item[1.] $\mathcal{K}(\mathcal{M})(x) \sim \mathcal{L}(\mathcal{M})(x)$ at $\infty$.
\item[2.] $\mathcal{L}(\mathcal{L}^*(\mathcal{F})) \geq \mathcal{F}$, and $\mathcal{L}(\mathcal{L}^*(\mathcal{F}))=\mathcal{F}$ if $\mathcal{F}$ is concave.
\item[3.] For any continuous decreasing function $\mathcal{M}:  \mathbb{R}_{+} \to \mathbb{R}_{+}$ such that $\mathcal{M}(+0)=+\infty$,
    \begin{align*}
    \mathcal{L}(\mathcal{M}) \asymp \mathcal{M}\circ(\mathcal{M}/id)^{-1}.
    \end{align*}
\item[4.] For any continuous function $\mathcal{F}: \mathbb{R}_{+} \to \mathbb{R}_{+}$ such that $\mathcal{F}(+\infty)=+\infty$ and $\mathcal{F}(t)=o(t)$ at $\infty$, there exist two continuous decreasing functions $\mathcal{M}_1$, $\mathcal{M}_2:\mathbb{R}_{+} \to \mathbb{R}_{+}$, with $\mathcal{M}_i(+0)=+\infty$, $i=1,2$, such that
      \begin{align*}
    \mathcal{L}(\mathcal{M}_1)/ \mathcal{F} \to \infty \quad \textrm{ and } \quad  \mathcal{L}(\mathcal{M}_2)/ \mathcal{F} \to 0 \quad \textrm{ at } \quad \infty.
    \end{align*}
\end{itemize}
\end{pro}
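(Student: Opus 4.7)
The four items split by difficulty: items 2 and 3 are elementary convex-analysis calculations, item 4 is a direct construction built on item 3, and item 1 is a Tauberian result that I expect to be the main obstacle.

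For item 2, the plan is to plug $s=x$ into the supremum defining $\mathcal{L}^{*}(\mathcal{F})(\tau)$ to obtain $\mathcal{L}^{*}(\mathcal{F})(\tau)\ge -\tau x+\mathcal{F}(x)$ for every $\tau$, so $x\tau+\mathcal{L}^{*}(\mathcal{F})(\tau)\ge \mathcal{F}(x)$, and the inequality follows by taking the infimum over $\tau$. If $\mathcal{F}$ is concave, a supporting line from above at $x$ with slope $a=a(x)$ gives $\mathcal{F}(s)\le \mathcal{F}(x)+a(s-x)$ for every $s$; this makes the supremum defining $\mathcal{L}^{*}(\mathcal{F})(a)$ attained at $s=x$ with value $\mathcal{F}(x)-ax$, and substituting $\tau=a$ in $x\tau+\mathcal{L}^{*}(\mathcal{F})(\tau)$ reproduces $\mathcal{F}(x)$, forcing equality. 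For item 3, I would define $\tau^{*}(x)$ as the unique solution of $\mathcal{M}(\tau^{*})=x\tau^{*}$; this is well-defined because $t\mapsto \mathcal{M}(t)/t$ is continuous and strictly decreases from $+\infty$ at $0^{+}$ (using $\mathcal{M}(+0)=+\infty$) to $0$ at $+\infty$ (using that the positive decreasing $\mathcal{M}$ is bounded on $[1,\infty)$). The upper bound $\mathcal{L}(\mathcal{M})(x)\le 2\mathcal{M}(\tau^{*}(x))$ follows by plugging in $\tau^{*}$, and the matching lower bound comes from a case split: for $\tau\le \tau^{*}$, monotonicity gives $\mathcal{M}(\tau)\ge \mathcal{M}(\tau^{*})$; for $\tau\ge \tau^{*}$, we have $x\tau\ge x\tau^{*}=\mathcal{M}(\tau^{*})$.

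For item 4, the plan is to use item 3 to reduce to choosing balancing rates: since $\mathcal{L}(\mathcal{M})(x)\asymp x\tau^{*}(x)$, it suffices to build $\mathcal{M}_{1},\mathcal{M}_{2}$ whose curves $\tau_{i}^{*}(x)$ realise prescribed rates. I would set $g_{1}(x)=\sqrt{x\mathcal{F}(x)}$ and $g_{2}(x)=\sqrt{\mathcal{F}(x)}$; since $\mathcal{F}(x)\to \infty$ and $\mathcal{F}(x)=o(x)$, one checks $g_{1}\gg \mathcal{F}\gg g_{2}$ at $\infty$ and that each $g_{i}(x)\to\infty$ with $g_{i}(x)=o(x)$. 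Setting $\tau_{i}^{*}(x)=g_{i}(x)/x$ and defining $\mathcal{M}_{i}$ implicitly by $\mathcal{M}_{i}(\tau_{i}^{*}(x))=g_{i}(x)$ yields $\mathcal{M}_{i}(0^{+})=+\infty$, since $\tau_{i}^{*}(x)\to 0$ while $g_{i}(x)\to\infty$. A mild smoothing of $g_{i}$ ensures $x\mapsto g_{i}(x)/x$ is strictly decreasing, hence $\mathcal{M}_{i}$ is well-defined, continuous and decreasing. Item 3 then gives $\mathcal{L}(\mathcal{M}_{i})(x)\asymp g_{i}(x)$, whence $\mathcal{L}(\mathcal{M}_{1})/\mathcal{F}\to\infty$ and $\mathcal{L}(\mathcal{M}_{2})/\mathcal{F}\to 0$ at $\infty$.

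The main obstacle is item 1, a K\"ohlbecker-type Tauberian statement. The plan is to integrate by parts (using $N(0^{+})=0$ and boundedness of $N$) to rewrite the defining integral as $\int_{0}^{\infty}e^{-xt}dN(t)=x\int_{0}^{\infty}e^{-xt-\mathcal{M}(t)}dt$, then apply Laplace's method to the exponent $\varphi_{x}(t)=xt+\mathcal{M}(t)$. For the upper bound, I would split at $\tau^{*}(x)$ and bound each half via $\varphi_{x}(t)\ge \mathcal{L}(\mathcal{M})(x)$, controlling the effective range by $\tau^{*}(x)+1/x$. For the lower bound, I would localise around the minimiser on an interval of length $\asymp 1/x$ and use continuity of $\mathcal{M}$. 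Together these produce $-\log\int_{0}^{\infty}e^{-xt}dN(t)=\mathcal{L}(\mathcal{M})(x)+O(\log x)$, and the sharp asymptotic $\mathcal{K}(\mathcal{M})\sim \mathcal{L}(\mathcal{M})$ then follows because $\mathcal{L}(\mathcal{M})(x)\to\infty$ absorbs the logarithmic error. The delicate point, where some regularity inherited from the coefficients $c_{k}$ and volumes $|G_{k}|$ is essential, is upgrading $\asymp$ to the sharp equivalence $\sim$.
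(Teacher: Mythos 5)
Your items 2 and 3 are correct and essentially match the paper (for item 3 your two-case lower bound $x\tau+\mathcal{M}(\tau)\geq \mathcal{M}(\tau^{*})$ is if anything cleaner than the paper's argument through the minimiser $x^{*}$, and gives the explicit constants $1$ and $2$). Item 4 takes a genuinely different but valid route: the paper sets $\mathcal{M}_i=\mathcal{L}^{*}(\widetilde{\mathcal{F}_i})$ for auxiliary functions straddling $\mathcal{F}$ and invokes item 2, whereas you prescribe the balance curve $\tau_i^{*}(x)=g_i(x)/x$ with $g_1=\sqrt{x\mathcal{F}}$, $g_2=\sqrt{\mathcal{F}}$ and read off $\mathcal{L}(\mathcal{M}_i)\asymp g_i$ from item 3; this avoids the conjugate transform at the cost of the smoothing needed to make $g_i(x)/x$ strictly monotone.

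The gap is in item 1. You end with $-\log\int_0^{\infty}e^{-xt}\,de^{-\mathcal{M}(t)}=\mathcal{L}(\mathcal{M})(x)+O(\log x)$ and assert that the error is absorbed because $\mathcal{L}(\mathcal{M})(x)\to\infty$. That inference fails: an additive $O(\log x)$ error is negligible only if $\log x=o(\mathcal{L}(\mathcal{M})(x))$, which is false precisely in the slowly growing cases the paper needs, e.g. $\mathcal{M}(s)=(\log\frac1s)^{\alpha}$ with $\alpha\leq 1$, where $\mathcal{L}(\mathcal{M})(x)=(\log x)^{\alpha}$ (Table 1). Your fallback — that upgrading to $\sim$ requires regularity of the $c_k$ and $|G_k|$ — is neither available (the proposition asserts $\sim$ unconditionally) nor necessary. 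The repair is to make the error $\log\bigl(1+\mathcal{L}(\mathcal{M})(x)\bigr)$ rather than $\log x$, and this is where the choice of splitting point matters: split the upper bound for $x\int_0^{\infty}e^{-(xt+\mathcal{M}(t))}dt$ at $t=\mathcal{L}(\mathcal{M})(x)/x$, not at $\tau^{*}(x)$. On $[0,\mathcal{L}/x]$ the exponent is at least its infimum, so that piece is at most $\mathcal{L}e^{-\mathcal{L}}$; on $[\mathcal{L}/x,\infty)$ drop $\mathcal{M}\geq 0$ to get $\int_{\mathcal{L}}^{\infty}e^{-u}du=e^{-\mathcal{L}}$. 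Hence $\mathcal{K}(\mathcal{M})(x)\geq \mathcal{L}(\mathcal{M})(x)-\log\bigl(1+\mathcal{L}(\mathcal{M})(x)\bigr)$, and since $\mathcal{M}(+0)=+\infty$ forces $\mathcal{L}(\mathcal{M})(x)\to\infty$, the relative error vanishes; combined with the lower bound $\int\geq e^{-(1+\epsilon)\mathcal{L}(\mathcal{M})(x)}$ this gives $\mathcal{K}(\mathcal{M})\sim\mathcal{L}(\mathcal{M})$ with no extra hypotheses. By contrast, your split at $\tau^{*}(x)$ only yields $xt+\mathcal{M}(t)\geq xt\geq x\tau^{*}=\mathcal{M}(\tau^{*})\geq\mathcal{L}/2$ on the tail, i.e. a bound $e^{-\mathcal{L}/2}$, which already destroys the sharp constant. (Minor point: your lower-bound localisation should invoke monotonicity of $\mathcal{M}$ rather than continuity — here $\mathcal{M}=-\log N$ is a step function.)
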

\begin{proof}
The first statement is Lemma 3.2 in \cite{Ben} (see also \cite{BPSa}).
For completeness we give a short proof of this result: For fixed
$x>0$ consider a positive function $m_x(t)=xt+\mathcal{M}(t)$ on
$(0, \infty)$. The function $m_x$ tends to $\infty$ at $0$ and at
$\infty$. Let $t_x$ be the smallest $t$ at which $m_x$ almost attains its
infimum, so that $(1+\epsilon)\mathcal{L}(\mathcal{M})(x)\geq m_x(t_x)$. Observe
that $\mathcal{M}$ is a decreasing function such that
$\mathcal{M}(+0)=+\infty$. We have
\begin{align*}
\int_0^{\infty} e^{-xt}de^{-\mathcal{M}(t)}&=x\int_0^{\infty}e^{-(xt+\mathcal{M}(t))}dt
\geq x\int_{t_x}^{\infty} e^{-(xt+\mathcal{M}(t))}dt\\
&\geq x e^{-\mathcal{M}(t_x)}
\int_{t_x}^{\infty}e^{-xt}dt=e^{-(xt_x+\mathcal{M}(t_x))}\geq e^{-(1+\epsilon)\mathcal{L}(\mathcal{M})(x)}.
\end{align*}
This proves the desired lower bound. For the upper bound, write
\begin{align*}
\int_0^{\infty} e^{-xt}de^{-\mathcal{M}(t)}&=x\int_0^{\infty}e^{-(xt+\mathcal{M}(t))}dt\\
&\leq x \left(\int_0^{\mathcal{L}(\mathcal{M})(x)/x} e^{-(xt+\mathcal{M}(t))}dt+ \int_{\mathcal{L}(\mathcal{M})(x)/x}^{\infty}e^{-xt}dt\right)\\
&\leq  x \int_0^{\mathcal{L}(\mathcal{M})(x)/x} e^{-\mathcal{L}(\mathcal{M})(x)}dt +\int_{\mathcal{L}(\mathcal{M})(x)}^{\infty} e^{-u}du\\
 &=\mathcal{L}(\mathcal{M})(x)e^{-\mathcal{L}(\mathcal{M})(x)}+e^{-\mathcal{L}(\mathcal{M})(x)}.
\end{align*}
That $\mathcal{K}(\mathcal{M})\sim \mathcal{L}(\mathcal{M})$ at
infinity follows easily from these two bounds.

The second statement is a standard fact from convex analysis, see for instance \cite{Roc} or \cite{Dyn}.

Proof of the third statement: Let $\mathcal{L}(\mathcal{M}):=J$. Then for all $t,x >0 $:
 \begin{align*}
J(t) \leq tx +  \mathcal{M}(x).
 \end{align*}
Let $x_{*}$ be such that $tx_{*}= \mathcal{M}(x_{*})$, then $x_{*}=(\frac{M}{id})^{-1}(t)$ and
\begin{align}\label{2.1}
J(t) \leq tx_{*} +  \mathcal{M}(x_{*})=2 \mathcal{M}(x_{*})=2 \mathcal{M}\circ \left(\frac{M}{id}\right)^{-1}(t).
 \end{align}
On the other hand, there exists $x^{*}$ such that $J(t)=tx^{*} +  \mathcal{M}(x^{*})$. Then $x^{*} \leq \frac{J(t)}{t}$ and therefore
\begin{align*}
 \mathcal{M}\left(\frac{J(t)}{t}\right) \leq \mathcal{M}(x^{*})\leq J(t)= \frac{tJ(t)}{t},
 \end{align*}
hence,
\begin{align*}
\left(\frac{ \mathcal{M}}{id}\right) \left(\frac{J(t)}{t}\right) \leq t.
 \end{align*}
Since $t \to (\mathcal{M}/id)(t)$ is continuous and strictly decreasing
\begin{align*}
\frac{J(t)}{t} \geq  \left(\frac{\mathcal{M}}{id}\right)^{-1}(t),
 \end{align*}
or equivalently
\begin{align}\label{2.2}
J(t) \geq t  \left(\frac{\mathcal{M}}{id}\right)^{-1}(t)= \mathcal{M} \circ \left(\frac{\mathcal{M}}{id}\right)^{-1}(t).
 \end{align}
Finally, the inequalities (\ref{2.1}) and (\ref{2.2}) give the desired result.

We prove statement 4:
 (a) Choose a continuous  $ \widetilde{\mathcal{F}_1}:\mathbb{R}_{+} \to \mathbb{R}_{+}$ such that close to zero $\widetilde{\mathcal{F}_1}>c>0$,  $\widetilde{\mathcal{F}_1}/\mathcal{F} \to \infty$ and   $\widetilde{\mathcal{F}_1}(\tau)=o(\tau)$ at $\infty$. Put $\mathcal{M}_1=\mathcal{L}^{*}(\widetilde{\mathcal{F}_1})$ and observe that $\mathcal{M}_1$ is convex, hence continuous.
The second statement of Proposition \ref{thm1} implies that
\begin{align*}
\mathcal{L}(\mathcal{M}_1)/ \mathcal{\mathcal{F}} \geq \widetilde{\mathcal{F}_1}/\mathcal{\mathcal{F}}\to \infty \quad \textrm{ at } \infty.
\end{align*}
(b) Choose a concave $ \widetilde{\mathcal{F}_2}:\mathbb{R}_{+} \to \mathbb{R}_{+}$
 such that near zero $\widetilde{\mathcal{F}_2}(t)\geq c>0$,  $\widetilde{\mathcal{F}_2}(t)/\mathcal{F}(t) \to 0$ and
$\widetilde{\mathcal{F}_2}(\tau)=o(\tau)$ at $\infty$.

Put  $\mathcal{M}_2=\mathcal{L}^{*}(\widetilde{\mathcal{F}_2})$. The function  $\mathcal{M}_2$ has the desired properties and according to the second statement of Proposition \ref{thm1},
\begin{align*}
\mathcal{L}(\mathcal{M}_2)/ \mathcal{\mathcal{F}} = \widetilde{\mathcal{F}_2}/\mathcal{\mathcal{F}}\to 0 \quad \textrm{ at }\quad \infty.
\end{align*}
The proof is finished.
\end{proof}

Some particular results based on the direct computation of
$\mathcal{L}(\mathcal{M})$ are presented in Table 1 (compare with  Property 3  of Proposition~\ref{thm1}).
\begin{table}[h]
\begin{center}
\begin{tabular}{|c|c|c|c|}
\hline &1 & 2 & 3\\
\hline \hline $\mathcal{M}(s)  \,$ at $0$&
$(\log\frac{1}{s})^{\alpha}$, \rule[-0.3cm]{0pt}{0.9cm}$\alpha>0$&
$s^{-\beta}$,
$\beta>0$ &$\exp_{(k)}\{s^{-\nu}\}$, $\nu>0$ $^{(*)}$\\
\hline $\mathcal{L}(\mathcal{M})(t)  \,$ at $\infty$&
\rule[-0.3cm]{0pt}{0.9cm}$(\log
t)^{\alpha}$ &$c_{\beta}t^{\beta_0}$ $^{(***)}$ &$\frac{t}{(\log _{(k)}t)^{\frac{1}{\nu}}}$ $^{(**)}$ \\

\hline
\end{tabular}
\scriptsize{$(*)$
$\rule[-0.6cm]{0pt}{1cm}\exp_{(k)}(t)=\underbrace{\exp\left(\exp\left(...\exp(t)\right)\right)}_{\textrm{k
times}}$, $(**)$ $\log_{(k)}(t)=\underbrace{\log
\left(1+\log\left(1+...\log(1+t)\right)\right)}_{\textrm{k
times}}$.\\
$(***)\rule[-0.4cm]{0pt}{0.8cm}$ $c_{\beta}=(1+\beta)/\beta^{\beta_0}$, $\quad \beta_0=\beta/(1+\beta) \qquad \qquad \qquad \qquad \qquad \qquad \qquad \qquad \qquad \quad$} \caption{Some examples.}
\end{center}
\end{table}

Let us show, for instance, how to compute the Legendre transform of the
function $\mathcal{M}: \tau \to \exp_{(k)}\{\tau^{-\nu}\}$, for
$k>1$ and $\nu>0$. Denote $R(\tau):=t \tau +\mathcal{M}(\tau)$.
The function $R(\tau)$ is strictly convex and tends to $\infty$ at
$0$ and at $\infty$. Let $\tau_{*}$ be the (unique!) value of
$\tau$ at which $R(\tau)$ attains its minimum, so that
$R(\tau_{*})=\mathcal{L}(\mathcal{M})(t)$. Since $\tau \to
R(\tau)$ is smooth, we obtain the following equation
\begin{align*}
0=& R'(\tau_{*})=t+\mathcal{M}'(\tau_{*})=\\
=& t-\frac{\nu}{\tau_{*}^{\nu +1}}\mathcal{M}(\tau_{*})\log
\mathcal{M}(\tau_{*}) \log_{(2)} \mathcal{M}(\tau_{*}) \dots
\log_{(k-1)} \mathcal{M}(\tau_{*}),
\end{align*}
which, in turn, implies the following two crucial properties.\\
$(1) \quad \log_{(k)} t \sim \tau_{*}^{-\nu} \, \textrm{ as } \, t
\to \infty. \quad \textrm{ In particular,} \quad \tau_{*}
\to 0 \quad \textrm{ at } \quad \infty.$
\begin{align*}
(2) \quad
\frac{\mathcal{M}(\tau_{*})}{\tau_{*}t}=\frac{\tau_{*}^{\nu}}{\nu
\log \mathcal{M}(\tau_{*}) \log \log \mathcal{M}(\tau_{*}) \dots
\log_{(k-1)} \mathcal{M}(\tau_{*})} \to 0 \, \textrm{ at } \, \infty.
\end{align*}
Finally, thanks to $(1)-(2)$ we arrive to the desired conclusion
\begin{align*}
\mathcal{L}(\mathcal{M})(t)=R(\tau_{*})=t \tau_{*}\left(1+
\frac{\mathcal{M}(\tau_{*})}{\tau_{*}t}\right) \sim t \tau_{*}
\sim \frac{t}{(\log_{(k)}t)^{1/ \nu}}, \quad \textrm{at} \quad \infty.
\end{align*}
\begin{rem} \label{rem2.3} The same method works also in a slightly more general setting:
Let $r: \mathbb{R}_{+} \to \mathbb{R}_{+}$ be a strictly
increasing function such that $r(+\infty)=+\infty$. Assume that $\lambda
r'(\lambda) \asymp r(\lambda)$ at $\infty$. Let
$\mathcal{M}(\tau)=\exp_{(k)}\left(r(1/\tau)\right)$, defined for $\tau >0$.
Then,
\begin{align*}
\mathcal{L}(\mathcal{M})(t)\asymp \frac{t}{r^{-1}(\log_{(k)}(t))}
\qquad \textrm{at} \quad \infty.
\end{align*}
\end{rem}
\begin{thm}\label{cor1}
Let $G$ be a locally compact non-compact group. Assume that $G$ is both unimodular and amenable. Let $\mathcal{F}: \mathbb{R}_{+} \to \mathbb{R}_{+}$ be a non-decreasing function such that $\mathcal{F}(t)=o(t)$ at infinity. There exists a symmetric strictly positive probability density $M$ on $G$ such that
\begin{align*}
-\log \parallel M^{*n}\parallel_{\infty}/\mathcal{F}(n) \to \infty \quad \textrm{at} \quad \infty.
\end{align*}
\end{thm}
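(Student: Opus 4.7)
The plan is to combine the upper bound of Proposition \ref{pro0} with the construction yielded by part 4 of Proposition \ref{thm1}. First, I would dispose of trivial cases by noting that if $\mathcal{F}$ is bounded, then Berg's equivalence (recalled at the start of Section 2) already guarantees a symmetric $M$ with $\|M^{*n}\|_\infty$ decaying sub-exponentially, which is more than enough. So I may assume $\mathcal{F}$ is continuous, non-decreasing, with $\mathcal{F}(+\infty)=+\infty$ and $\mathcal{F}(t)=o(t)$ at infinity (replacing it by a suitable majorant if needed).

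Next, apply Proposition \ref{thm1}(4) to produce a continuous decreasing function $\mathcal{M}:\mathbb{R}_{+}\to\mathbb{R}_{+}$ with $\mathcal{M}(+0)=+\infty$ such that
\[
\mathcal{L}(\mathcal{M})(n)/\mathcal{F}(n)\to\infty\quad\text{at}\quad\infty.
\]
The heart of the argument is then to \emph{realise} the tail profile $e^{-\mathcal{M}}$ as the step function $N$ associated (via Proposition \ref{pro0}) with a probability measure $M$ of the form prescribed in Section 2. Concretely, fix $\lambda_{0}<1$ and put $\lambda_{k}=2^{-k}\lambda_{0}$; choose $c_{0}=1-\lambda_{0}$ and $c_{k}=\lambda_{k-1}-\lambda_{k}$ for $k\geq 1$, so that the tails satisfy $\sigma(k)=\lambda_{k}$. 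Using the fact that $G$ is non-compact (and $\sigma$-compact, which is implicit in asking for a strictly positive density), select an increasing sequence of symmetric Borel sets $G_{0}\subset G_{1}\subset\cdots$ with $\bigcup_{k}G_{k}=G$, with $\bigcup_{k}G_{k}$ generating a dense subgroup, and with Haar volumes $|G_{k}|$ comparable to $e^{\mathcal{M}(\lambda_{k})}$. Setting $M(x)=\sum_{k}(c_{k}/|G_{k}|)\mathbf{1}_{G_{k}}(x)$ produces the desired symmetric strictly positive probability density.

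Finally, Proposition \ref{pro0} yields
\[
-\log\|M^{*n}\|_{\infty}\;\geq\;\mathcal{K}(\mathcal{M}_{N})(n),
\]
where $\mathcal{M}_{N}=-\log N$ is the step function with $\mathcal{M}_{N}(\lambda_{k})=\log|G_{k}|\asymp\mathcal{M}(\lambda_{k})$. Because the geometric ratio $\lambda_{k-1}/\lambda_{k}=2$ is bounded, $\mathcal{M}_{N}\stackrel{d}{\asymp}\mathcal{M}$; this dilatational comparison is preserved by the Legendre transform, so $\mathcal{L}(\mathcal{M}_{N})\stackrel{d}{\asymp}\mathcal{L}(\mathcal{M})$. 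Proposition \ref{thm1}(1) applied to $\mathcal{M}_{N}$ then gives $\mathcal{K}(\mathcal{M}_{N})\sim\mathcal{L}(\mathcal{M}_{N})$ at infinity. Chaining the comparisons,
\[
-\log\|M^{*n}\|_{\infty}\;\gtrsim\;\mathcal{L}(\mathcal{M})(cn)\;\gg\;\mathcal{F}(n),
\]
since $\mathcal{L}(\mathcal{M})$ dominates any dilate of $\mathcal{F}$ asymptotically (the $o(t)$ hypothesis on $\mathcal{F}$ rules out linear growth, so dilating does not help it catch up).

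The main obstacle I expect is the discretisation in paragraph two: matching a continuous target $\mathcal{M}$ produced abstractly by Proposition \ref{thm1}(4) with the step function $\mathcal{M}_{N}$ dictated by the discrete parameters $(c_{k},G_{k})$, while simultaneously respecting the geometric constraints (symmetry of $G_{k}$, their prescribed Haar measures, and generation of a dense subgroup). The key saving grace is that the Köhlbecker and Legendre transforms are robust under the dilatational equivalence $\stackrel{d}{\asymp}$, so a coarse geometric discretisation of $\lambda_{k}$ is enough, and the auxiliary requirement that $|G_{k}|$ equal a prescribed real number only needs to hold up to a bounded multiplicative factor—which is automatic once $G$ is non-compact unimodular.
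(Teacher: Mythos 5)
Your proposal is correct and follows essentially the same route as the paper's proof: the upper bound of Proposition \ref{pro0} combined with Proposition \ref{thm1}(4) to manufacture $\mathcal{M}$ and Proposition \ref{thm1}(1) to pass from the K\"ohlbecker to the Legendre transform. The only real difference is in the discretisation: the paper fixes an arbitrary increasing sequence $G_k$ with $|G_k|\to\infty$ and solves $e^{-\mathcal{M}(\sigma(k))}=|G_k|^{-1}$ for $\sigma(k)$, so that the resulting step function satisfies $N\leq e^{-\mathcal{M}}$ pointwise and hence $\mathcal{K}(\widetilde{\mathcal{M}})\geq\mathcal{K}(\mathcal{M})$ exactly --- which sidesteps your extra (true, but additional) steps that $\stackrel{d}{\asymp}$ is preserved by $\mathcal{L}$ and that the dilation constant is absorbed by the concavity of $\mathcal{L}(\mathcal{M})$ rather than by the $o(t)$ hypothesis on $\mathcal{F}$.
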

\begin{proof}
For $\mathcal{F}$ given,  choose $\mathcal{M}$ such that $\mathcal{L}(\mathcal{M})/\mathcal{F}\to \infty$ at $\infty$. See Proposition \ref{thm1} (4). Choose an increasing sequence of Borel sets $G_k \subset G$ such that $|G_k|\to \infty$ and define a decreasing sequence $\{\sigma(k)\}$ from the equation $e^{-\mathcal{M}(\sigma(k))}=|G_{k}|^{-1}$, $k>>1$. Then define a sequence $\{c_k\}$ as follows: For $k \geq k_0>>1$ put $c_k=\sigma(k-1)-\sigma(k)$ and for $1 \leq k< k_0$ choose $c_k>0$ such that $\sum_{k=0}^{\infty}c_k=1$. Finally define
\begin{align} \label{equat2.3}
M=\sum_{k=0}^{\infty}c_{k}\frac{1}{|G_{k}|} \textbf{1}_{G_{k}},
\end{align}
and a step-function $N$ which is right-continuous, non-decreasing and has jumps at points $\lambda_k=\sigma(k)$ with values $N(\lambda_k)=1/|G_{k}|$. Write $N=e^{-\widetilde{\mathcal{M}}}$. Clearly $\widetilde{\mathcal{M}}\geq \mathcal{M}$ and therefore $\mathcal{K}(\widetilde{\mathcal{M}})\geq \mathcal{K}(\mathcal{M})$.
Applying Proposition $\ref{pro0}$ we come to the desired conclusion
\begin{align*}
 & -\log \parallel M^{*n}\parallel_{\infty}/\mathcal{F}(n) =  \mathcal{K}\left(\widetilde{\mathcal{M}}\right)/\mathcal{F}(n)\geq \\
& \geq
\mathcal{K}(\mathcal{M})/\mathcal{F}(n)\sim \mathcal{L}(\mathcal{M})/\mathcal{F}(n) \to \infty \qquad \textrm{ at }\quad \infty.
\end{align*}
The proof is finished.
\end{proof}
This proves the claim stated in the beginning of this section. Observe that the construction given in the proof of Theorem \ref{cor1}, Table 1 and Remark \ref{rem2.3} can be used to build many examples of fast decaying convolution powers on locally compact non-compact unimodular amenable groups. For example, choose $l \geq 1$ and for $k \geq k_0$ big enough put 
\begin{align*}
\sigma(k)=\left( \log_{(l+1)}|G_k|\right)^{\frac{1}{\nu}}.
\end{align*}
Define $c_k>0$ such that $\sum_{k \geq 0}c_k=1$ and $c_k=\sigma(k-1)-\sigma(k)$ for $k \geq k_0+1$. Then the probability density $M$ defined by (\ref{equat2.3}) satisfies
\begin{align*}
\parallel M^{*n} \parallel_{\infty} \leq \exp \left( -\frac{cn}{(\log_{(l)}n)^{\frac{1}{\nu}}}\right) \qquad \textrm{at} \quad \infty,
\end{align*}
for some constant $c>0$.

\section{Convolution powers on locally finite groups.}

Assume that the group $G$ under consideration has the following structure: there exists a strictly increasing family of finite subgroups $\{ G_k \}$ of $G$ such that $G=\bigcup_k G_k$.  We say that $G$ is \textit{locally finite}. Evidently any such group is non-compact (in the discrete topology) unimodular and amenable.
\begin{pro}\label{pro3.1}
Let $G$ be a locally finite group. Any probability $\mu$ on $G$ can be represented as a convex linear combination 
\begin{align}\label{equ31}
\mu=\sum_{k=0}^{\infty} c_k \mu_k,
\end{align}
of probabilities $\mu_k$ each of which is supported by a finite subgroup $G_k$. The sequence $\{G_k\}$ increases and $G=\bigcup_k G_k$.
\end{pro}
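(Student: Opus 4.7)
The plan is to decompose $G$ into a disjoint union using the filtration $\{G_k\}$ and then restrict $\mu$ to each piece. Concretely, since $G_0 \subset G_1 \subset G_2 \subset \cdots$ with $G = \bigcup_k G_k$, set $A_0 = G_0$ and $A_k = G_k \setminus G_{k-1}$ for $k \geq 1$. The sets $\{A_k\}_{k\geq 0}$ form a partition of $G$, and by definition $A_k \subset G_k$.

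The construction proceeds as follows. Define
\[
c_k := \mu(A_k), \qquad k \geq 0.
\]
Then $c_k \geq 0$ and, by countable additivity of $\mu$ together with $G = \bigsqcup_k A_k$, one has $\sum_{k=0}^\infty c_k = \mu(G) = 1$. For each $k$ with $c_k > 0$, define the probability measure
\[
\mu_k := \frac{1}{c_k}\,\mu|_{A_k},
\]
i.e.\ $\mu_k(B) = \mu(B \cap A_k)/c_k$ for any $B \subset G$. For indices $k$ with $c_k = 0$, choose $\mu_k$ to be an arbitrary probability on $G_k$ (for instance the normalized Haar measure $m_{G_k}$); the specific choice is irrelevant because these terms contribute nothing to the sum. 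In either case $\mathrm{supp}\,\mu_k \subset G_k$, as required.

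It remains to verify $\mu = \sum_{k=0}^\infty c_k \mu_k$. For any $B \subset G$,
\[
\sum_{k=0}^\infty c_k \mu_k(B) = \sum_{k:c_k>0} \mu(B \cap A_k) = \sum_{k=0}^\infty \mu(B \cap A_k) = \mu\Bigl(\bigsqcup_{k\geq 0}(B \cap A_k)\Bigr) = \mu(B),
\]
using once more that the $A_k$ partition $G$.

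There is essentially no obstacle here: the statement is a straightforward disintegration of $\mu$ along the natural stratification coming from the filtration $\{G_k\}$, and the only point worth flagging is the harmless choice one has to make on the (possibly empty) indices where $\mu$ places no mass on $A_k$. Note also that the decomposition is not unique; any probability measure $\mu_k$ supported on $G_k$ with $\mu_k|_{A_k} = c_k^{-1}\mu|_{A_k}$ (when $c_k>0$) would work, which matches the fact that in later sections specific choices (such as $\mu_k = m_{G_k}$) will be imposed to exploit the idempotent property $m_{G_k} * m_{G_k} = m_{G_k}$.
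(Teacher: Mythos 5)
Your proof is correct, but it is not the decomposition the paper uses, so a short comparison is in order. You restrict $\mu$ to the annuli $A_k=G_k\setminus G_{k-1}$, set $c_k=\mu(A_k)$ and $\mu_k=c_k^{-1}\mu|_{A_k}$; this is the most economical choice and each $\mu_k$ is then supported on $A_k\subset G_k$, which is all the statement requires (it asks only that $\mathrm{supp}\,\mu_k\subset G_k$, not that $\mu_k$ charge all of $G_k$). The paper instead first normalizes the filtration so that $\mu(G_0)>0$ (splitting off the trivial case $|\mathrm{supp}\,\mu|<\infty$), and then takes
\begin{align*}
\mu_k=\Bigl(\sum_{l=0}^k\tfrac{\mu(G_l\setminus G_{l-1})}{\mu(G\setminus G_{l-1})}\Bigr)^{-1}\Bigl(\sum_{l=0}^k\tfrac{\mathbf{1}_{G_l\setminus G_{l-1}}}{\mu(G\setminus G_{l-1})}\Bigr)\mu,
\qquad
c_k=\mu(G_k\setminus G_{k-1})\sum_{l=0}^k\tfrac{\mu(G_l\setminus G_{l-1})}{\mu(G\setminus G_{l-1})},
\end{align*}
i.e.\ a cumulative, reweighted version in which each $\mu_k$ spreads mass over all of $G_k$; an interchange of summation shows $\sum_k c_k\mu_k=\mu$ there as well. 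Both constructions are legitimate instances of the (explicitly non-unique) representation \eqref{equ31}, and yours is easier to verify; the paper's choice avoids the degenerate $c_k=0$ bookkeeping only partially (its $c_k$ can also vanish when $\mu(G_k\setminus G_{k-1})=0$), so you lose nothing by your simpler route. Two minor points you handle adequately: the existence of an increasing exhaustion $G=\bigcup_k G_k$ by finite subgroups is built into the paper's definition of locally finite, so assuming it is fine, and your arbitrary choice of $\mu_k$ on the null indices is harmless since those terms contribute nothing.
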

\begin{proof}
If $|supp \, \mu| < \infty$, then choose $c_0=1$, $c_k=0$ for $k>0$ and let $\{G_k\}$ be any increasing sequence of finite subgroups such that $supp \, \mu \subset G_0$.

Assume now that $| supp \, \mu| = \infty$. Choose any increasing sequence  $\{G_k\}$ of finite subgroups of $G$ such that $\mu(G_0)>0$. Then $\mu(G \backslash G_k)>0$ for all $k>0$ and let
\begin{align*}
\mu_k:=\left(\sum_{l=0}^k\frac{\mu (G_l \backslash G_{l-1})}{\mu (G \backslash G_{l-1})}\right)^{-1}\left( \sum_{l=0}^k \frac{\textbf{1}_{G_l \backslash G_{l-1}}}{\mu ( G\backslash G_{l-1})}\right) \mu
\end{align*}
and
\begin{align*}
c_k:=\mu (G_k \backslash G_{k-1}) \left(\sum_{l=0}^k\frac{\mu (G_l \backslash G_{l-1})}{\mu (G \backslash G_{l-1})}\right),
\end{align*}
where $G_{-1}:=\emptyset$. The proof is finished.
\end{proof}
\begin{rem}
It is easy to see that the representation (\ref{equ31})
is not unique. 
Fix any $\{c_k\}$ and $\{\mu_k\}$ such that (\ref{equ31}) holds and define a probability $Q$ on $\Omega=\mathbb{N}\times G$ as follows:
\begin{align*}
Q(\{k\}\times A) = c_k \mu_k(A).
\end{align*}
Evidently we have
\begin{align*}
Q(\mathbb{N} \times A)=\mu(A).
\end{align*}
Let $H_k:=\{k\}\times G$, then $Q(H_k)=c_k$, $H_k\cap H_l=\emptyset$, for  $k \neq l$, and $\Omega=\bigcup_k H_k$. Finally 
\begin{align*}
Q(\mathbb{N}\times A|H_k)=\mu_k(A) 
\end{align*}
and 
\begin{align*}
\mu(A)=Q(\mathbb{N}\times A)=\sum_{k=0}^{\infty} c_k Q(\mathbb{N}\times A|H_k)=\sum_{k=0}^{\infty} c_k\mu_k(A).
\end{align*}
Thus with probability $c_k$ we choose the group $G_k$ and then with probability $\mu_k$ we perform a choice of an element from this group. In the paper \cite{Bro} four interesting choices of $(\mu_k, c_k)$ (called shuffling models) are presented. Our choice is different from \cite{Bro}. We plan to come back to these shuffling models in our further publications.
\end{rem} 

In this work we consider the following special cases: 
Let $m_{k}$ be the normalized Haar measure on $G_{k}$. Let $c=\{c_{k}\}$ be a sequence of strictly positive reals such that $\sum_k c_{k}=1$. Define a probability measure $\mu$ on $G$ as follows
\begin{align*}
\mu=\sum_{k}c_{k}m_{k}. 
\end{align*}

Let $m$ be the counting measure on $G$ and $L^p=L^p(G,m)$. Define on $L^p$ the following linear operators $P_{k}f=f*m_{k}$ and $Pf=f*\mu$. Clearly $P_k$ and $P$ are bounded symmetric operators and
\begin{align*}
P=\sum_{k=0}^{\infty} c_{k}P_{k}.
\end{align*}
Let us compute $P^{n}$. Since for any $l,k \in \mathbb{N}$, $m_l*m_k=m_{\max(l,k)}$,
\begin{align*}
P^{n}=&\left(\sum_{k=0}^{\infty} c_{k}P_{k}\right)^{n}=\sum_{k_{1},\ldots,k_{n}=0}^{\infty}c_{k_{1}}\cdot\ldots\cdot c_{k_{n}}P_{k_{1}}\cdot\ldots\cdot P_{k_{n}}=
\end{align*}
\begin{align*}
=&\sum_{k_{1},\ldots,k_{n}=0}^{\infty}c_{k_{1}}\cdot\ldots\cdot c_{k_{n}}P_{\max(k_{1},\ldots,k_{n})}=\\
=&\sum_{k=0}^{\infty} \left(\sum_{k_{1},\ldots,k_{n}: \max(k_{1},\ldots,k_{n})=k}c_{k_{1}}\cdot\ldots\cdot c_{k_{n}}\right)P_{k}=\sum_{k=0}^{\infty}a_{k}P_{k},
\end{align*}
where, as in the proof of Proposition \ref{pro0},
\begin{align*}
a_{k}=\left( c_{0}+\ldots+c_{k} \right)^{n}-\left( c_{0}+\ldots+c_{k-1} \right)^{n}, \qquad \textrm{for}\quad k \geq 1
\end{align*}
and
\begin{align*}
a_{0}=c_{0}^{n}.
\end{align*}
From these equations we obtain
\begin{align*}
c_{k}=\left( a_{0}+\ldots+a_{k} \right)^{\frac{1}{n}}-\left( a_{0}+\ldots+a_{k-1} \right)^{\frac{1}{n}}, \qquad \textrm{for}\quad k \geq 1
\end{align*}
and
\begin{align*}
c_{0}=a_{0}^{\frac{1}{n}}.
\end{align*}
In particular, the symmetric operator $P^{\frac{1}{m}}$, defined via spectral theory, has the following representation:
\begin{align*}
P^{\frac{1}{m}}=\sum_{k=0}^{\infty} \left( \left( c_{0}+\ldots+c_{k} \right)^{\frac{1}{m}}-\left( c_{0}+\ldots+c_{k-1} \right)^{\frac{1}{m}}\right)P_{k}.
\end{align*}
As a consequence of these two observations we obtain:
\begin{align} \label{eq1}
P^{\frac{n}{m}}=\sum_{k=0}^{\infty} \left( \left( c_{0}+\ldots+c_{k} \right)^{\frac{n}{m}}-\left( c_{0}+\ldots+c_{k-1} \right)^{\frac{n}{m}}\right)P_{k}.
\end{align}
Let now $t\in\mathbb{R}_{+}$ and $r_{n}\in \mathbb Q_{+}$ be a sequence of rationals such that $r_{n}\rightarrow t$. Then, by spectral theory, $P^{r_{n}}\rightarrow P^{t}$ strongly,
and for all $k\geq 0$,
\begin{align*}
\left( c_{0}+\ldots+c_{k} \right)^{r_{n}}\rightarrow \left( c_{0}+\ldots+c_{k} \right)^{t}.
\end{align*}
Hence, passing to the limit in both sides of the equation (\ref{eq1}) we obtain
\begin{align}\label{equat3.3}
P^{t}=\sum_{k=0}^{\infty} \left( \left( c_{0}+\ldots+c_{k} \right)^{t}-\left( c_{0}+\ldots+c_{k-1} \right)^{t} \right) P_{k}.
\end{align}
The above facts are crucial for our purposes.
We summarize them in the following proposition.
\begin{pro}\label{proposit3.2}
The measure $\mu =\mu(c)$ is infinite divisible. In particular, there exists a weakly continuous convolution semigroup $(\mu_{t})_{t>0}$ of probability measures on $G$ such that $\mu=\mu_1$. Moreover the following representation holds:
\begin{align*}
\mu_{t}=\sum_{k=0}^{\infty} C_{k}(t)m_{k},
\end{align*}
where
\begin{align*}
C_{k}(t)=\left( c_{0}+\ldots+c_{k} \right)^{t}-\left( c_{0}+\ldots+c_{k-1} \right)^{t}, \qquad \textrm{for} \quad k\geq 1
\end{align*}
and
\begin{align*}
C_{0}(t)=c_{0}^{t}.
\end{align*}
\end{pro}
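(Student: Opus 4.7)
The bulk of the work has already been carried out in the derivation of equation (\ref{equat3.3}): starting from the identity $m_l * m_k = m_{\max(l,k)}$, the authors expanded $P^{n/m}$ on rationals and passed to the limit via the spectral theorem to express $P^t$, for any real $t>0$, as a norm-convergent sum $\sum_{k\geq 0}C_k(t) P_k$. The plan is simply to read this operator identity as a statement about probability measures and then verify the three claims of the proposition: that the $C_k(t)$ define a probability, that $t\mapsto \mu_t$ is a convolution semigroup, and that it is weakly continuous.

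First I would check that $\mu_t := \sum_{k\geq 0}C_k(t)\,m_k$ is a probability measure on $G$. Non-negativity of $C_k(t)$ for $t>0$ is immediate from the monotonicity of $s\mapsto s^t$ applied to the non-decreasing partial sums $c_0+\cdots+c_k\in(0,1]$. The total mass is a telescoping sum:
\begin{align*}
\sum_{k=0}^{\infty}C_k(t)=\lim_{N\to\infty}\bigl(c_0+\cdots+c_N\bigr)^{t}=1^{t}=1,
\end{align*}
so each $\mu_t$ is a genuine probability, and every $C_k(t)\,m_k$ is supported in the finite set $G_k$, so the series converges in total variation.

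Next I would transfer equation (\ref{equat3.3}) from operators to measures. Since $P_k$ is right-convolution by $m_k$ and the series converges strongly on $L^2$ (in particular when applied to $\delta_e\in\ell^2(G)$), evaluating $P^t\delta_e$ and $\sum_k C_k(t) P_k\delta_e$ at an arbitrary $g\in G$ yields $\mu_t(\{g^{-1}\})=\sum_k C_k(t)\,m_k(\{g^{-1}\})$, showing that the unique symmetric probability measure whose right-convolution operator on $L^2$ equals $P^t$ is precisely the $\mu_t$ defined above. Hence $P^t$ is convolution by $\mu_t$, and the semigroup property $P^sP^t=P^{s+t}$ (from the functional calculus applied to $P$) immediately gives $\mu_s*\mu_t=\mu_{s+t}$; in particular $\mu_1=\mu$, and $\mu=(\mu_{1/n})^{*n}$ witnesses infinite divisibility for every $n\in\mathbb N$.

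Finally, weak continuity of $t\mapsto\mu_t$ reduces to pointwise continuity of each coefficient $t\mapsto C_k(t)$, which is obvious from the definition, combined with dominated convergence: for any bounded continuous $\varphi:G\to\mathbb R$ one has
\begin{align*}
\int\varphi\,d\mu_t=\sum_{k=0}^{\infty}C_k(t)\int\varphi\,dm_k,
\end{align*}
and $|C_k(t)\int\varphi\,dm_k|\le\|\varphi\|_\infty\,C_k(t)$ with $\sum_k C_k(t)$ locally uniformly bounded in $t$, so one may exchange sum and limit as $t\to t_0$. The only real subtlety, and the step that warrants the most care, is justifying the operator-to-measure transfer on a non-amenable counting-measure space; but this is entirely routine because each $P_k$ is given by convolution with a compactly supported probability, so the partial sums of (\ref{equat3.3}) are themselves right-convolution operators by explicit probability measures, and the pointwise limit of the associated measure densities on any finite subset of $G$ is attained in finitely many terms.
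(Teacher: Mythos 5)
Your proposal is correct and follows essentially the same route as the paper: the authors treat the operator computation culminating in equation (\ref{equat3.3}) as the substance of the argument and state Proposition \ref{proposit3.2} as a summary, leaving the routine verifications implicit. You supply exactly those missing routine steps (non-negativity of the $C_k(t)$, the telescoping total mass, the operator-to-measure transfer, the semigroup identity from functional calculus, and weak continuity on $(0,\infty)$ by dominated convergence), all of which are sound.
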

\begin{rem}
The problem of embedding  an infinite divisible probability measure $\mu$, defined on a general locally compact group, into a weakly continuous convolution semigroup $(\mu_t)_{t>0}$ is open. See the papers of S.G. Dani,  Y. Guivarc'h, and R. Shah \cite{Gui}, and McCrudden \cite{Mcc}. We provide a solution to this problem when  the underlying group $G$ is locally finite and the measure $\mu$ is a convex linear combination of idempotent measures.
\end{rem}
\begin{pro}\label{pro6}
For each $t>0$ and $x \in G$, we define $\mu_t(x):=\mu_t(\{x\})$. The function  $(t,x)\to \mu_{t}(x)$ has the following form 
\begin{align*}
\mu_{t}(x)=\sum_{k=0}^{\infty} \left(\sum_{n\geq k} \frac{C_{n}(t)}{|G_{n}|}\right)\mathbf{1}_{G_{k}\backslash G_{k-1}}(x),
\end{align*}
where we let $G_{-1}=\emptyset$. In particular,
\begin{align*}
\mu_{t}(e)=\sum_{n\geq 0}C_{n}(t)/|G_{n}|.
 \end{align*}
\end{pro}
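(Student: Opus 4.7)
The plan is to deduce the pointwise formula directly from the series representation of $\mu_t$ as a convex combination of normalized Haar measures established in Proposition \ref{proposit3.2}.

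First I will start from the identity
\begin{align*}
\mu_t=\sum_{n=0}^{\infty}C_n(t)\,m_n
\end{align*}
given by Proposition \ref{proposit3.2}, and evaluate it on the singleton $\{x\}$. Since $m_n$ is the normalized Haar (i.e.\ uniform) measure on the finite subgroup $G_n$, we have
\begin{align*}
m_n(\{x\})=\frac{1}{|G_n|}\mathbf{1}_{G_n}(x),
\end{align*}
so termwise summation (justified because the series converges absolutely: $\sum_{n}C_n(t)/|G_n|\leq\sum_n C_n(t)=1$) yields
\begin{align*}
\mu_t(x)=\sum_{n=0}^{\infty}\frac{C_n(t)}{|G_n|}\mathbf{1}_{G_n}(x).
\end{align*}

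Next I will reindex this sum according to the partition of $G$ by the sets $G_k\setminus G_{k-1}$, $k\geq 0$, which covers $G$ because $\{G_k\}$ is strictly increasing and exhausts $G$ (with the convention $G_{-1}=\emptyset$). The key observation is that, since the subgroups are nested, for $x\in G_k\setminus G_{k-1}$ one has $\mathbf{1}_{G_n}(x)=1$ precisely when $n\geq k$. Substituting,
\begin{align*}
\mu_t(x)=\sum_{k=0}^{\infty}\Bigl(\sum_{n\geq k}\frac{C_n(t)}{|G_n|}\Bigr)\mathbf{1}_{G_k\setminus G_{k-1}}(x),
\end{align*}
which is exactly the desired formula. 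Finally I will specialize to $x=e$: since $e\in G_0\setminus G_{-1}$, only the $k=0$ term survives, giving
\begin{align*}
\mu_t(e)=\sum_{n\geq 0}\frac{C_n(t)}{|G_n|}.
\end{align*}

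There is essentially no obstacle here: the statement is a direct unpacking of Proposition \ref{proposit3.2} using only the explicit form of the Haar measure on a finite group and the nestedness of the subgroups $G_k$. The only point worth explicit mention is the legitimacy of exchanging the order of summation, which is immediate from absolute convergence.
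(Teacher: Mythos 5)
Your proof is correct and is exactly the intended argument: the paper states Proposition \ref{pro6} without proof, as an immediate consequence of the representation $\mu_t=\sum_n C_n(t)m_n$ from Proposition \ref{proposit3.2}, and your evaluation at singletons together with the reindexing over the partition $G_k\setminus G_{k-1}$ is the canonical way to unpack it. The interchange of summation is indeed harmless since all terms are non-negative.
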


Since $\mu=\mu_t|_{t=1}$, the celebrated L\'{e}vy-Khinchin formula \cite{Hey}, \cite[Thm. 18.19]{Bfo} applies: $\mu$ can be decomposed into Gaussian and Poissonian components. Since $G$ is discrete, the Gaussian component is degenerate. Thus $\mu_t$ and therefore $\mu$ are Poissonian measures. Again, since $G$ is discrete, the corresponding L\'{e}vy measure $\Pi$ is in fact a finite Borel measure on $G\backslash \{e\}$. We can extend this measure to the whole group by putting $\Pi(\{e\})=\pi_{0}>0$. This extension, by the L\'{e}vy-Khinchin formula, does not change the measure $\mu_{t}$. Thus, we must have the following representation
\begin{align*}
\mu_{t}=\exp(-t\pi)\left(\varepsilon_{0}+t\Pi+\frac{t^{2}}{2!}\Pi^{*2}+\ldots \right), \qquad
\pi=\Pi(G).
\end{align*}
\textit{We claim that the measure $\Pi$ has the same structure as $\mu$:}
\begin{align*}
\Pi=\sum_{k=0}^{\infty}\pi_{k}m_{k}.
\end{align*}
To prove this claim we have to find the coefficients $\{\pi_{k}\}_{k\geq 0}$. Note that $\pi_0$ has been already chosen. Define the operator $\Pi: L^2 \to L^2$ as
\begin{align*}
\Pi f:= f*\Pi , \quad f \in L^{2}.
\end{align*}
In this notation our claim takes the following form:
\begin{align*}
P^{t}=\exp(-t\pi)\cdot \exp(t\Pi), \quad \textrm{for} \quad t>0.
\end{align*}
We have
\begin{align*}
\exp(t\Pi)=\sum_{n=0}^{\infty}\frac{t^{n}}{n!}\Pi^{n}=\sum_{n=0}^{\infty}\frac{t^{n}}{n!}\left(\sum_{k=0}^{\infty}\pi_{k}(n)P_{k}\right),
\end{align*}
where, similarly to (\ref{equat3.3}), for $n\geq 1$ and $k \geq 1$,
\begin{align*}
\pi_k(n)=\left(\pi_0+\ldots+\pi_{k} \right)^{n}-\left(\pi_0+\ldots+\pi_{k-1} \right)^{n},
\end{align*}
and
\begin{align*}
\pi_0(0):=1,\qquad \pi_k(0)=0, \quad \textrm{for} \quad k\geq 1.
\end{align*}
Having this in mind we continue our computations:
\begin{align*}
\exp(t\Pi)=\sum_{k=0}^{\infty} \left( \sum_{n=0}^{\infty}\frac{t^{n}}{n!}\pi_{k}(n) \right)P_k.
\end{align*}
Let $k=0$, then
\begin{align*}
\sum_{n=0}^{\infty}\frac{t^{n}}{n!}\pi_{k}(n)=1+t\pi_0+\frac{t^{2}}{2!}\pi_{0}^{2}+\ldots=\exp(t\pi_0).
\end{align*}
For $k\geq 1$ we obtain
\begin{align*}
\sum_{n=0}^{\infty}\frac{t^{n}}{n!}\pi_{k}(n)=&\sum_{n=0}^{\infty}\frac{t^{n}}{n!} \left(\left(\pi_{0}+\ldots+\pi_{k}\right)^{n}-\left(\pi_{0}+\ldots+\pi_{k-1}\right)^{n} \right)=\\
=&\sum_{n=0}^{\infty}\frac{t^{n}}{n!} \left(\pi_{0}+\ldots+\pi_{k}\right)^{n}-\sum_{n=0}^{\infty}\frac{t^{n}}{n!} \left(\pi_{0}+\ldots+\pi_{k-1}\right)^{n}=\\
=&\exp(t\left(\pi_{0}+\ldots+\pi_{k}\right))-\exp(t\left(\pi_{0}+\ldots+\pi_{k-1}\right)).
\end{align*}
Putting together all the computations above we obtain
\begin{align*}
e^{-t\pi}\cdot e^{t\Pi}=e^{-t(\pi-\pi_0)}P_0+\sum_{k\geq 1}\left(e^{-t\left(\pi- (\pi_0+ \ldots +\pi_k)\right)} -e^{-t\left(\pi- (\pi_0+ \ldots +\pi_{k-1})\right)} \right)P_k.
\end{align*}
On the other hand,
\begin{align*}
e^{-t\pi}\cdot e^{t\Pi}=P^{t}=c_{0}^{t}P_{0}+\sum_{k\geq 1}\left((c_0+\ldots+c_{k})^{t}-(c_0+\ldots+c_{k-1})^{t}\right)P_k.
\end{align*}
This equality is true if and only if the sequence $\{\pi_k\}$ satisfies the following infinite system of equations:
\begin{align*}
\pi-(\pi_0+\ldots+\pi_k)=-\log (c_0+\ldots +c_k), \quad \textrm{for} \quad k=0,1,2,\ldots \,.
\end{align*}
This system uniquely defines $\pi_k$ for $k\geq 1$:
\begin{align*}
\pi_k=& \log (c_0+\ldots +c_k) -  \log (c_0+\ldots +c_{k-1})=\\
=&\log \left(1+\frac{c_k}{c_0+\ldots + c_{k-1}} \right)>0.
\end{align*}
Since $\pi_0>0$ has been chosen, we let $\pi=\pi_0+\pi_1+\pi_2+\ldots$ and define
\begin{align*}
\Pi=\sum_{k\geq 0}\pi_{k}P_{k}.
\end{align*}
Applying all the above computations in reverse order, we come to the following conclusion:
\begin{pro}[Poisson representation] \label{pro3.6} There exists a finite measure $\Pi=\sum_{k=0}^{\infty}\pi_k m_k$ such that
\begin{align*}
\mu_t= \exp(-t\pi)\cdot \left(\varepsilon_{0}+t\Pi+\frac{t^{2}}{2!}\Pi^{*2}+\ldots \right), \quad \pi=\Pi(G).
\end{align*}
\end{pro}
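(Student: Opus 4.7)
The plan is to reduce the claimed Poisson formula to the operator identity $P^{t}=e^{-t\pi}e^{t\Pi}$ on $L^{2}(G,m)$, where $\Pi f = f*\Pi$, and to construct $\Pi$ explicitly as a series of Haar measures. The L\'evy-Khinchin theorem on the discrete group $G$ already guarantees that the semigroup $(\mu_{t})$ built in Proposition~\ref{proposit3.2} is purely Poissonian with some finite L\'evy measure (the Gaussian component being automatically trivial), so the real content is to show that this L\'evy measure has the prescribed form $\Pi=\sum_{k}\pi_{k}m_{k}$ with non-negative coefficients and to identify those coefficients.

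I would postulate $\Pi=\sum_{k\geq 0}\pi_{k}m_{k}$ and exploit the idempotent identity $m_{l}*m_{k}=m_{\max(l,k)}$, which makes the family $\{P_{k}\}$ close under composition and allows exactly the same multinomial expansion used to compute $P^{n}$ in the proof of Proposition~\ref{proposit3.2}. This yields $\Pi^{n}=\sum_{k}\pi_{k}(n)P_{k}$ with $\pi_{k}(n)=(\pi_{0}+\cdots+\pi_{k})^{n}-(\pi_{0}+\cdots+\pi_{k-1})^{n}$ for $k\geq 1$. Summing $t^{n}/n!$ telescopes each difference of $n$-th powers into a difference of exponentials and produces
\begin{align*}
e^{-t\pi}e^{t\Pi}=e^{-t(\pi-\pi_{0})}P_{0}+\sum_{k\geq 1}\left(e^{-t(\pi-s_{k})}-e^{-t(\pi-s_{k-1})}\right)P_{k},
\end{align*}
where $s_{k}=\pi_{0}+\cdots+\pi_{k}$. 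Comparing coefficient by coefficient in the linearly independent family $\{P_{k}\}$ with the expansion of $P^{t}$ supplied by Proposition~\ref{proposit3.2} reduces the operator identity to the infinite system $\pi-s_{k}=-\log(c_{0}+\cdots+c_{k})$ for all $k\geq 0$.

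Differencing consecutive equations determines uniquely $\pi_{k}=\log\bigl(1+c_{k}/(c_{0}+\cdots+c_{k-1})\bigr)>0$ for $k\geq 1$, while $\pi_{0}>0$ is a free parameter corresponding to the indeterminacy of the L\'evy measure at the identity. Finiteness of $\pi=\Pi(G)$ follows by a second telescoping: $\sum_{k=1}^{N}\pi_{k}=\log(c_{0}+\cdots+c_{N})-\log c_{0}\to -\log c_{0}$ as $N\to\infty$, so that $\pi=\pi_{0}-\log c_{0}<\infty$ and $\Pi$ is a genuine finite Borel measure. The main obstacle is essentially bookkeeping, namely carrying out the double telescoping (in $k$ and in $n$) cleanly; the only conceptual point to check is the legitimacy of the operator-series manipulations, which follows from spectral theory since each $P_{k}$ is a bounded self-adjoint projection and the series $\sum\pi_{k}P_{k}$ defining $\Pi$ converges in operator norm thanks to $\sum\pi_{k}<\infty$.
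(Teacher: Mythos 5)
Your proposal is correct and follows essentially the same route as the paper: reduce to the operator identity $P^{t}=e^{-t\pi}e^{t\Pi}$, expand $\Pi^{n}=\sum_{k}\pi_{k}(n)P_{k}$ via $m_{l}*m_{k}=m_{\max(l,k)}$, telescope in $n$, match coefficients against the representation of $P^{t}$ from Proposition~\ref{proposit3.2}, and solve the resulting system for $\pi_{k}=\log\bigl(1+c_{k}/(c_{0}+\cdots+c_{k-1})\bigr)$. Your explicit check that $\pi=\pi_{0}-\log c_{0}<\infty$ is a detail the paper leaves implicit but is a welcome addition.
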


\section{Recurrence criterion.}\label{sec:rec}

The problem of recurrence of random walks on countable abelian
groups with finite number of generators has been investigated in
details in the book of Spitzer \cite{Spi}. Dudley \cite{Dud} and
Revuz \cite{Rev} proved that each countable abelian group which
does not contain a subgroup isomorphic $\mathbb{Z}^3$ admits a
recurrent random walk. The proof of Dudley is not constructive,
hence it is very desirable to find some explicit construction of
recurrent random walks on such groups. This construction has been
done in the works of S. Molchanov and his collaborators
\cite{Mol}, \cite{Kas} for the following groups: $G=\mathbb Z[1/p]$ - the group
of all rational numbers of the form
$r=\frac{k}{p^m}$ where $k$, $p>1$ and $m \geq 0$ are integers, and
$G=\Gamma_p=\mathbb{Z}^k\otimes \mathbb{Z}(p)^{(\infty)}$, with
$k=0,1,2$. They obtained sufficient conditions for the recurrence
and also necessary ones which are very close to each other.
Brofferio and Woess \cite{Bro} proved recurrence criteria for
certain card shuffling models, that is, random walks on the
infinite symmetric group $S_{\infty}=\bigcup_{n\geq 1}S_n$. One of
these models has been considered previously by Lawler \cite{Law}.
He obtained a very general sufficient condition for recurrence:
\begin{pro}\label{pro7}
A random walk on $G=\bigcup_{n\geq 0} G_n$ with law $\mu$ is recurrent, if
\begin{align*}
\sum_{n=1}^{\infty} \frac{1}{|G_n|(1-\mu(G_n))}=\infty.
\end{align*}
\end{pro}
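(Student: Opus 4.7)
The plan is to interpret the random walk as an electrical network on $G$ with symmetric conductances $c(x,y)=\mu(x^{-1}y)$, and invoke the standard variational criterion that the walk is recurrent iff the effective resistance $R_{\mathrm{eff}}(e,\infty)$ is infinite; equivalently, iff
\begin{align*}
\inf\{(-\Delta f,f):f\ \text{finitely supported},\ f(e)=1\}=0,
\end{align*}
where $(-\Delta f,f)=\tfrac{1}{2}\sum_{x,y}|f(x)-f(y)|^2\mu(x^{-1}y)$. I will deduce $R_{\mathrm{eff}}(e,\infty)=\infty$ from Lawler's hypothesis via a cutset estimate along the filtration $\{G_n\}$.

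The key computation is short. For each $n$, take the cutset $\Pi_n=\{(x,y):x\in G_n,\ y\notin G_n,\ \mu(x^{-1}y)>0\}$ separating $e$ from $\infty$. Left-invariance of $\mu$ together with the subgroup identity $x^{-1}G_n=G_n$ for $x\in G_n$ yield
\begin{align*}
C(\Pi_n)=\sum_{x\in G_n}\sum_{y\notin G_n}\mu(x^{-1}y)=\sum_{x\in G_n}\mu(G\setminus G_n)=|G_n|(1-\mu(G_n)),
\end{align*}
and this number is also $(-\Delta\mathbf{1}_{G_n},\mathbf{1}_{G_n})$. Were the $\Pi_n$ pairwise disjoint, Nash--Williams would give $R_{\mathrm{eff}}(e,\infty)\geq\sum_n 1/C(\Pi_n)$, and Lawler's hypothesis would close the argument directly.

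The main obstacle is that the $\Pi_n$ are not pairwise disjoint: writing $n(x)=\min\{k:x\in G_k\}$, an edge $(x,y)$ with $n(x)=a<b=n(y)$ lies in every $\Pi_j$ for $a\leq j<b$. I would handle this in two moves. First, short every annulus $G_n\setminus G_{n-1}$ to a single vertex $V_n$; by Rayleigh monotonicity this only decreases $R_{\mathrm{eff}}$, and the reduced graph has edges $V_aV_b$ ($a<b$) carrying conductance $(|G_a|-|G_{a-1}|)\mu_b$, where $\mu_b=\mu(G_b)-\mu(G_{b-1})$. Second, on this reduced graph I would produce level-indexed test functions $f_N=h\circ n$ with $h(0)=1$ and $h$ vanishing past level $N$, and use the explicit identity
\begin{align*}
(-\Delta(h\circ n),h\circ n)=\sum_m \mu_m\sum_{a<m}(|G_a|-|G_{a-1}|)(h(a)-h(m))^2,
\end{align*}
obtained by splitting the Dirichlet form according to the level of the step $z$ and using the ultrametric identity $n(xz)=\max(n(x),n(z))$ whenever $n(x)\neq n(z)$. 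The hard part is the ensuing one-dimensional quadratic optimization: I must verify that the minimum value is $O(1/B_N)$ with $B_N=\sum_{k=1}^N 1/(|G_k|(1-\mu(G_k)))$. The structural bound $|G_n|-|G_{n-1}|\geq|G_n|/2$, a consequence of $[G_n:G_{n-1}]\geq 2$ in a strictly increasing chain of subgroups, then lets me pass from an annulus-indexed series to Lawler's series up to a factor of $2$, so that divergence of the latter forces $(-\Delta f_N,f_N)\to 0$ and hence recurrence.
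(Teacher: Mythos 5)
The paper contains no proof of this proposition --- it is quoted from Lawler, and the only recurrence argument actually carried out is the one for Proposition \ref{pro3.5}, which treats the special measures $\mu=\mu(c)$ via the exact spectral formula $\mu^{*n}(e)=\int_0^\infty(1-\lambda)^n\,dN(\lambda)$ and the Green's function criterion. Your electrical-network route is therefore a genuinely different (and more general) argument, and most of its ingredients check out: the cutset identity $C(\Pi_n)=|G_n|(1-\mu(G_n))$ is correct, and so is the reduction to level functions $f=h\circ n$. One warning about your ``explicit identity'': as justified it is incomplete, since steps $z$ with $n(z)=n(x)$ can send $x$ to a strictly lower level and do contribute to the form (e.g.\ $z=x^{-1}$). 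The identity is nevertheless true, because the involution $(x,y)\mapsto(y,x)$, which preserves $|f(x)-f(y)|^2\mu(x^{-1}y)$ by symmetry of $\mu$, carries the pairs with $n(x)=a<b=n(z)$ bijectively onto the diagonal pairs with $n(x)=n(z)=b$ and $n(xz)=a<b$, the remaining diagonal pairs contributing zero; this folding is also exactly where the factor $\tfrac12$ of the Dirichlet form disappears. You need to say this, otherwise the displayed formula is only a lower bound for the quantity you must bound from above.

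The genuine gap is that you stop at the ``one-dimensional quadratic optimization,'' which is the entire analytic content of the proof; everything before it is bookkeeping. It does close. Set $a_k=1-\mu(G_k)$, $B_N=\sum_{k=0}^N\frac{1}{|G_k|a_k}$, and take the level function $f_N=\sum_{k=0}^Nt_k\mathbf{1}_{G_k}$ with $t_k=\frac{1}{B_N|G_k|a_k}$, so that $f_N(e)=1$ and $f_N$ is supported in $G_N$. Polarizing the Dirichlet form and using $x^{-1}G_l=G_l$ for $x\in G_m\subseteq G_l$ gives, for $m\le l$,
\begin{align*}
(-\Delta\mathbf{1}_{G_m},\mathbf{1}_{G_l})=\sum_{x\in G_m,\ y\notin G_l}\mu(x^{-1}y)=|G_m|\,a_l,
\end{align*}
hence
\begin{align*}
(-\Delta f_N,f_N)=\sum_{k}t_k^2|G_k|a_k+2\sum_{k<l}t_kt_l|G_k|a_l
=\frac{1}{B_N}+\frac{2}{B_N^2}\sum_{k<l}\frac{1}{a_k\,|G_l|}.
\end{align*}
Since the chain is strictly increasing, $|G_l|\ge 2^{\,l-k-1}|G_{k+1}|$, so $\sum_{l>k}|G_l|^{-1}\le 2|G_{k+1}|^{-1}$, and since $a_k\ge a_{k+1}$ the double sum is at most $2\sum_{j\ge1}(a_j|G_j|)^{-1}\le 2B_N$. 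Therefore $(-\Delta f_N,f_N)\le 5/B_N\to0$ under Lawler's hypothesis, the capacity of $\{e\}$ vanishes, and the walk is recurrent. This computation replaces both the shorting step and the unproved $O(1/B_N)$ claim; your structural bound $|G_n|-|G_{n-1}|\ge|G_n|/2$ is the same geometric-growth input, but as written your argument ends precisely where the estimate begins.
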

Special cases of Proposition \ref{pro7} had been proved previously by
Flatto and Pitt \cite{Fla} (each $G_n$ is cyclic, or $G$
is the direct sum of finite abelian groups), and before that by
Darling and Erd$\ddot{\textrm{o}}$s \cite{Dar} ($G$ is the
direct sum of infinitely many copies of the group of order two). 

In the case $\mu=\mu(c)$, we
show that the condition above is also necessary.
\begin{pro}\label{pro3.5}
A random walk on $G$ with law $\mu=\mu(c)$
is recurrent if and only if the following condition holds:
\begin{align*}
\sum_{n=0}^{\infty}\frac{1}{|G_{n}|\left(1-\mu(G_{n})\right)}=\infty.
\end{align*}
\end{pro}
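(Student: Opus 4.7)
The proof splits into sufficiency, which is exactly Proposition \ref{pro7}, and necessity: recurrence must force the divergence of $\sum_n 1/(|G_n|(1-\mu(G_n)))$. My plan for the necessity is to obtain a closed-form expression for the return probabilities $\mu^{*n}(e)$ from the (commuting!) spectral structure of $P = \sum_k c_k P_k$, compute the Green function $G(e,e) := \sum_{n \geq 0} \mu^{*n}(e)$, and then compare it with Lawler's series by an elementary Fubini rearrangement.

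\emph{Step 1 (spectral formula).} Because $m_l * m_k = m_{\max(l,k)}$, the operators $P_k : f \mapsto f * m_k$ are commuting orthogonal projections with $P_0 \geq P_1 \geq P_2 \geq \cdots$ and strong limit $0$ (the only $G$-right-invariant function in $L^2(G)$ is zero, since $G$ is infinite). Setting $E_j = P_j - P_{j+1}$ for $j \geq 0$ and $E_{-1} = I - P_0$ yields a mutually orthogonal resolution of $I$ on which $P$ acts as multiplication by $\sigma_j = c_0 + \cdots + c_j$ on $\mathrm{Im}(E_j)$ (and by $0$ on $\mathrm{Im}(E_{-1})$); this is essentially the spectral decomposition of Proposition \ref{pro4.1}. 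Since $(P_k \delta_e)(e) = 1/|G_k|$, we get $\alpha_j := (E_j \delta_e,\delta_e) = 1/|G_j| - 1/|G_{j+1}|$ for $j \geq 0$, hence for every $n \geq 1$
\[
\mu^{*n}(e) = \sum_{j \geq 0} \sigma_j^n\, \alpha_j.
\]
Because each $c_k > 0$, we have $\sigma_j < 1$ for all $j$, and Tonelli gives
\[
G(e,e) = 1 + \sum_{j \geq 0} \frac{\sigma_j\, \alpha_j}{1 - \sigma_j}.
\]
Since $\sigma_j \to 1$ and $\sigma_j \geq c_0 > 0$, recurrence ($G(e,e) = \infty$) is equivalent to the divergence of $\sum_j \alpha_j/(1-\sigma_j)$.

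\emph{Step 2 (Fubini comparison).} The telescoping identity $1/|G_n| = \sum_{j \geq n} \alpha_j$ together with Tonelli's theorem yields
\[
\sum_{n \geq 0} \frac{1}{|G_n|(1-\mu(G_n))} = \sum_{n \geq 0} \frac{1}{1-\sigma_n}\sum_{j \geq n}\alpha_j = \sum_{j \geq 0}\alpha_j \sum_{n=0}^j\frac{1}{1-\sigma_n} \geq \sum_{j \geq 0}\frac{\alpha_j}{1-\sigma_j}.
\]
Combining with Step 1, recurrence forces Lawler's sum to diverge, which establishes the necessity.

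The real work lies in Step 1, i.e.\ the explicit spectral decomposition of $P$ — a feature specific to measures of the form $\mu(c)$. Step 2 is then essentially free: it uses only that $1/(1-\sigma_n)$ is non-negative, so the partial sums $\sum_{n \leq j} 1/(1-\sigma_n)$ dominate each individual term $1/(1-\sigma_j)$. Notice that only the easy one-sided inequality is needed; the reverse inequality would generally fail (the partial sums can be much larger than the last term), which is precisely why the present argument does not reprove Lawler's sufficient condition but rather complements it.
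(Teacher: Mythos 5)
Your argument is correct, and its core (the exact Green function formula $\sum_{n\ge 0}\mu^{*n}(e)=1+\sum_j \alpha_j\sigma_j/(1-\sigma_j)$ coming from the commuting projections $P_k$) is the same computation the paper performs, phrased there as $\mu^{*n}(e)=\int_0^\infty(1-\lambda)^n\,dN(\lambda)$ with $dN$ the atomic measure of masses $\alpha_j$ at the points $1-\sigma_j$. Where you genuinely diverge is in how the Green function is compared with Lawler's series. You keep only the one-sided Fubini bound $\sum_n\frac{1}{|G_n|(1-\sigma_n)}=\sum_j\alpha_j\sum_{n\le j}\frac{1}{1-\sigma_n}\ge\sum_j\frac{\alpha_j}{1-\sigma_j}$ and then import Proposition \ref{pro7} for the sufficiency direction, so your proof of the equivalence is not self-contained. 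The paper's proof needs no appeal to Lawler: since the $G_k$ are \emph{strictly} increasing finite subgroups, $[G_{j+1}:G_j]\ge 2$, hence $\frac{1}{2|G_j|}\le\alpha_j=\frac{1}{|G_j|}-\frac{1}{|G_{j+1}|}\le\frac{1}{|G_j|}$, and combined with $\frac12\,\sigma(j)\le 1-\mu(G_j)\le\sigma(j)$ this gives the two-sided comparison $\sum_j\frac{\alpha_j}{1-\sigma_j}\asymp\sum_j\frac{1}{|G_j|(1-\mu(G_j))}$ with explicit constants. So your closing remark is not quite right: although the term-by-term reverse of your Fubini inequality does fail, the reverse comparison of the two \emph{series} holds for this class of measures, and the same Green function computation in fact reproves Lawler's sufficient condition for $\mu=\mu(c)$ rather than merely complementing it. What your route buys is a slightly cleaner necessity argument; what the paper's route buys is a single self-contained proof of both directions.

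Two small points of hygiene. First, your displayed identity $\sum_n\frac{1}{|G_n|(1-\mu(G_n))}=\sum_n\frac{1}{1-\sigma_n}\sum_{j\ge n}\alpha_j$ is not an equality: $\mu(G_n)=\sigma_n+\sum_{k>n}c_k|G_n|/|G_k|>\sigma_n$, so $1-\mu(G_n)\le 1-\sigma_n$; fortunately this makes the left side \emph{larger}, so your chain of inequalities survives unchanged, but you should write $\ge$ (or invoke $1-\mu(G_n)\asymp 1-\sigma_n$). Second, you should say explicitly that $\sigma_j<1$ for every $j$ because all $c_k$ are strictly positive and sum to $1$ — you do use this when summing the geometric series.
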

\begin{proof}
Let $\lambda \to N(\lambda)$ be a right-continuous, non-decreasing step-function having jumps at the points $\lambda_k=\sigma(k)$, where $\sigma(k)=\sum_{i>k}c_i$, and $N(\lambda_k)=1/|G_{k}|$. 
By Proposition \ref{pro6},
\begin{align*}
\mu^{*n}(e)=& \mu_n(e)=\sum_{k \geq 0}\frac{1}{|G_k|}\left[ (1-\sigma(k))^n-(1-\sigma(k-1))^n\right]=\\
=& \sum_{k \geq 0}\frac{1}{|G_k|} (1-\sigma(k))^n - \sum_{k \geq 0}\frac{1}{|G_k|} (1-\sigma(k-1))^n=\\
=& \sum_{k \geq 0}(1-\sigma(k))^n \left( \frac{1}{|G_k|} -\frac{1}{|G_{k+1}|}\right)=\int_0^{\infty} (1-\lambda)^n dN(\lambda).
\end{align*}
Since the measure $B \to \int_B dN$ is supported by the interval $[0, \sigma(0)]\subset [0,1]$, there exists $\delta >1$, such that, for all $n \in \mathbb{N}$
\begin{align}
\int_0^{\infty} e^{-n\delta\lambda} dN(\lambda)\leq \int_0^{\infty} (1-\lambda)^n dN(\lambda) \leq \int_0^{\infty} e^{-n\lambda} dN(\lambda).
\end{align}
Next we use a well-known criterion of recurrence: a random walk with law $\mu$ is recurrent if and only if
\begin{align*}
\sum_n \mu^{*n}(e)=\infty.
\end{align*}
Since 
\begin{align}\label{equat4.2}
\mu^{*n}(e)\stackrel{d}{\simeq} \int_0^{\infty} e^{-n\lambda} dN(\lambda),
\end{align}
we easily transform this criterion into the property
\begin{align*}
\sum_{k\geq 0} \frac{1}{|G_k|\sigma(k)}=\int_0^{\infty}\frac{dN(\lambda)}{\lambda}\asymp \sum_n \mu^{*n}(e)=\infty.
\end{align*}
Finally, observe that $1-\mu(G_n)\sim \sigma(n)$. This finishes the proof.
\end{proof}

Let $\{X(n)\}_{n\geq 0}$ be the random walk on $G=\bigcup_n G_n$ with law $\mu$. Assume that $\{X(n)\}$ is recurrent. Y. Guivarc'h asked the following question: \textit{How slow may  the function $n \to \mathbb{P}(X(2n)=e|X(0)=e)$ decay at infinity?}

It is well known that if a locally compact non-compact group $G$
is compactly generated (for example if $G$ is infinite and finitely generated), the upper  rate of decay, among all functions defined as 
$$n\to\parallel \mu^{*2n}\parallel_{\infty},$$ 
where $\mu$ is a symmetric probability density
whose support generates $G$, is realized (up to the equivalence relation $\stackrel{d}{\asymp} $) when $\mu$ has finite second moment with respect to a 
word metric defined by a compact symmetric generating set. This rate is a geometric invariant of the group $G$. See
for instance \cite{BPSa}, \cite{ChP}, \cite{ChP1}. In particular,
let $G$ be an abelian non-compact compactly generated group. By the structure
theory \cite[Thm.9.8]{Hew1},
\begin{align*}
G \cong \mathbb{R}^l \times \mathbb{Z}^m \times K, \qquad l+m>0,
\end{align*}
where $K$ is a compact group. Then, for any symmetric 
probability density $\mu:G\to \mathbb{R}_+$, whose support generates $G$, we must have
\begin{align*}
\parallel \mu^{*2n}\parallel_{\infty} \leq C n^{-(l+m)/2}
\qquad \textrm{at} \quad \infty.
\end{align*}
Our next proposition shows that if $G$ is \textit{not compactly generated}
the upper rate of decay of the function $n \to \parallel
\mu^{*2n}\parallel_{\infty}$ may not exist in the sense explained
above.

\begin{pro}\label{cor3} Let $G$ be an infinite countable locally finite group and $\mathcal{F}:
\mathbb{R}_{+} \to \mathbb{R}_{+}$ be a non-decreasing continuous 
function such that $\mathcal{F}(t)=o(t)$ and $\mathcal{F}(t) \to \infty$ at infinity.
There exists a symmetric strictly positive probability density
$\mu$ on $G$ such that 
\begin{align*}
-\log \mu^{*n}(e)/\mathcal{F}(n) \to 0 \quad \textrm{at} \quad \infty,
\end{align*}
(compare with Theorem \ref{cor1}).
\end{pro}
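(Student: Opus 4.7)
The plan is to build $\mu=\sum_k c_k m_k$, as in Proposition~\ref{pro3.1}, by directly designing the tail sums $\sigma(k):=\sum_{i>k}c_i$ so that the spectral lower bound on $\mu^{*n}(e)$ coming from (\ref{equat4.2}) stays well above $e^{-\mathcal{F}(n)}$. I begin with any increasing exhaustion $\{G_k\}$ of $G$ by finite subgroups; after passing to a subsequence (which is still an exhaustion) I may assume $|G_{k+1}|\geq 2|G_k|$, hence $1/|G_k|-1/|G_{k+1}|\geq 1/(2|G_k|)$.

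By (\ref{equat4.2}) there is $b_1>0$ with $\mu^{*n}(e)\geq \int_0^\infty e^{-b_1 n\lambda}\,dN(\lambda)$, where the atomic measure $dN$ has mass $1/|G_k|-1/|G_{k+1}|$ at the point $\sigma(k)$. Retaining a single $k$-term of this sum yields the basic inequality
\begin{align*}
-\log\mu^{*n}(e)\leq \log 2+\inf_k\bigl\{b_1 n\sigma(k)+\log|G_k|\bigr\},\qquad n\geq 1.
\end{align*}
Setting $c_k=\sigma(k-1)-\sigma(k)>0$ for $k$ large, with positive initial $c_k$'s summing to $1$, produces a symmetric $\mu$ that is strictly positive on $G$ because $G=\bigcup_k G_k$ and every $c_k>0$. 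It therefore suffices to design a strictly decreasing $\sigma(k)\downarrow 0$ such that the right-hand infimum is $o(\mathcal{F}(n))$ as $n\to\infty$.

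A tempting approach is to invoke Proposition~\ref{thm1}(4b) to obtain a continuous decreasing $\mathcal{M}\colon\mathbb{R}_+\to\mathbb{R}_+$ with $\mathcal{M}(0^+)=+\infty$ and $\mathcal{L}(\mathcal{M})/\mathcal{F}\to 0$, and to set $\sigma(k):=\mathcal{M}^{-1}(\log|G_k|)$; one would then compare the resulting discrete infimum $\inf_k\{b_1 n\sigma(k)+\mathcal{M}(\sigma(k))\}$ with $\mathcal{L}(\mathcal{M})(b_1 n)=\inf_{\tau>0}\{b_1 n\tau+\mathcal{M}(\tau)\}$. The obstacle is an $R$-density condition $\sigma(k)/\sigma(k+1)\leq R$, which may fail because the gaps in $\log|G_k|$ can be arbitrarily large. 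I bypass this obstacle by constructing $\sigma(k)$ by hand along a schedule $n_1<n_2<\cdots\to\infty$ satisfying $\mathcal{F}(n_j)/j\to\infty$ (achievable because $\mathcal{F}\to\infty$): pick $k_j\to\infty$ with $\log|G_{k_j}|\leq\mathcal{F}(n_j)/(2j)$, set $\sigma(k_j):=\mathcal{F}(n_j)/(2b_1 j\,n_{j+1})$, and interpolate strictly monotonically between successive $k_j$ (choosing $n_{j+2}$ large enough so that $\sigma(k_{j+1})<\sigma(k_j)$, which is possible since $\mathcal{F}(t)/t\to 0$). For any $n\in[n_j,n_{j+1}]$ the test value $k=k_j$ in the basic inequality yields
\begin{align*}
b_1 n\sigma(k_j)+\log|G_{k_j}|\leq \mathcal{F}(n_j)/j\leq \mathcal{F}(n)/j=o(\mathcal{F}(n)),
\end{align*}
since $n\to\infty$ forces $j\to\infty$. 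This completes the argument.
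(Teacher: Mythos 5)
Your argument is correct, but it takes a genuinely different route from the paper's. The paper's proof of Proposition \ref{cor3} recycles the machinery of Theorem \ref{cor1}: it invokes Proposition \ref{thm1}(4) to produce a continuous decreasing $\mathcal{M}$ with $\mathcal{L}(\mathcal{M})/\mathcal{F}\to 0$, chooses the jump points $\sigma(k)$ so that the step function $N$ dominates $e^{-\mathcal{M}(\delta\lambda)}$ pointwise, and concludes from $\mu^{*n}(e)\geq \int_0^\infty e^{-n\lambda}\,de^{-\mathcal{M}(\lambda)}$ together with the K\"{o}hlbecker--Legendre asymptotics $\mathcal{K}(\mathcal{M})\sim\mathcal{L}(\mathcal{M})$ of Proposition \ref{thm1}(1). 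You replace all of this by a bare-hands schedule construction that uses only the single-atom lower bound $\mu^{*n}(e)\geq e^{-b_1 n\sigma(k)}/(2|G_k|)$ (where, incidentally, the subsequence step is unnecessary: $[G_{k+1}:G_k]\geq 2$ holds automatically for a strictly increasing chain of finite subgroups). Your version is self-contained and elementary; the paper's version, combined with Table 1, additionally produces explicit families of slowly decaying return probabilities, whereas your diagonal choice of $n_j,k_j$ is purely existential. One correction of emphasis: the ``obstacle'' you attribute to the transform approach --- a density condition $\sigma(k)/\sigma(k+1)\leq R$ --- does not actually arise in the paper's implementation, because the paper compares the Laplace integrals of $N$ and of $e^{-\mathcal{M}(\delta\,\cdot)}$ directly via the pointwise inequality $N\geq e^{-\mathcal{M}(\delta\,\cdot)}$ and integration by parts, rather than comparing a discrete infimum over the jump points with the continuous Legendre infimum; the only requirement is the upper bound $\sigma(k)\leq \delta^{-1}\mathcal{M}^{-1}(\log|G_{k+1}|)$, which is always satisfiable. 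Two minor points of housekeeping in your write-up: fix $c_0\geq 1/2$ at the outset so that the constant $b_1$ in (\ref{equat4.2}) (which depends on $\sigma(0)$) is available before the $\sigma(k_j)$ are defined, and note explicitly that $-\log\mu^{*n}(e)\geq 0$, so your one-sided upper bound does give the stated limit.
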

\begin{proof}
Let $G=\bigcup_{k=1}^{\infty}G_k$, where $\{e\}=G_0\subset G_1\subset \ldots \subset G_k\subset \ldots$ are finite subgroups of $G$. Let $\mu=\mu(c)$ for some $c=(c_k)$. Then by (\ref{equat4.2}), for some $\delta>1$
\begin{align*}
\mu^{*n}(e)\geq \int_0^{\infty} e^{-\delta n \lambda} dN(\lambda).
\end{align*}
Proceeding as in the proof of Theorem \ref{cor1} but choosing $N(\lambda)\geq e^{-\mathcal{M}(\delta \lambda)}$ with $\mathcal{M}$ continuous and decreasing, and such that
\begin{align*}
\mathcal{L}(\mathcal{M})/\mathcal{F}\to 0 \quad \textrm{ at }\quad \infty,
\end{align*}
we can find $c=(c_k)$ and hence $\mu=\mu(c)$ such that $c_k>0$ and
\begin{align*}
\mu^{*n}(e) \geq \int_0^{\infty} e^{-n\delta \lambda} dN(\lambda) \geq \int_0^{\infty} e^{-n \lambda} de^{-\mathcal{M}(\lambda)}.
\end{align*}
Therefore as $n\to \infty$ we obtain
\begin{align*}
-\log \mu^{*n}(e)/\mathcal{F}(n) \leq \mathcal{K}(\mathcal{M})/\mathcal{F}(n) \sim \mathcal{L}(\mathcal{M})/\mathcal{F}(n) \to 0.
\end{align*}
The proof of the proposition is finished.
\end{proof}
\begin{exa}
Assume that $t \to g(t)$ is a non-decreasing
function such that $\lim_{t\to 0} g(t) = 0$. Choose a sequence $c=(c_k)$ of strictly positive reals and a right-continuous step-function $N\geq g$ which has jumps at $\lambda_k=\sigma(k)$ and $N(\lambda_k)=1/|G_{k}|$. Let $\mu=\mu(c)$. Thanks to our
choice the following chain of inequalities holds true:
\begin{align*}
 \parallel \mu^{*n}\parallel_{\infty} & \geq 
\int_0^{\infty}e^{-n\delta \lambda}d{N}(\lambda) \geq
\int_0^{\infty}e^{-n\delta \lambda}dg(\lambda) =
\delta n \int_0^{\infty}e^{-n\delta \lambda}g(\lambda)
d\lambda=\\
 &=\delta\int_0^{\infty}e^{-\delta s}g\left(\frac{s}{n}\right)ds =
\delta g\left(\frac{1}{n}\right)\int_0^{\infty}\left[g\left(\frac{s}{n}\right)/g\left(\frac{1}{n}\right)\right]e^{-\delta s}ds
.
\end{align*}
Assuming that the ratio $g(\lambda \tau)/g(\tau)$ has dominated
convergence as $\tau \to 0$ to some integrable function (in fact, 
always to the function $\lambda \to \lambda^{\alpha}$) we obtain
\begin{align*}
\liminf_{n \to \infty} \frac{ \parallel
\mu^{*n}\parallel_{\infty}}{g(1/n)} \geq \delta
\int_0^{\infty}s^{\alpha} e^{-\delta
s}ds=\delta^{-\alpha}\Gamma(1+\alpha).
\end{align*}
For instance, choosing $g(t)\sim (\log_{(k)}\frac{1}{t} )^{-1}$ at $0$, we obtain for some $c>0$
\begin{align*}
\parallel \mu^{*n}\parallel_{\infty} \geq c \left( \log_{(k)} n \right)^{-1} \quad \textrm{at} \quad \infty.
\end{align*}
\end{exa}

\section{On the $L^{p}$-spectrum of the Laplacian.}

Let $X(n)$ be a random walk on $G$ with law $\mu=\mu(c)$ and $\left(P^n\right)$ its Markov semigroup.
The Laplacian $\Delta$ associated to $X(n)$ is defined as the Markov generator of the semigroup $\left(P^n\right)$, that is,  $\Delta=P-I$. Clearly $-\Delta:L^2\to L^2$ is a bounded, symmetric, non-negative definite operator.
Hence it admits the following representation
\begin{align*}
-\Delta=\int_0^{\infty} \lambda dE_{\lambda} \quad \textrm{(in the strong topology)},
\end{align*}
where $\lambda \to E_{\lambda}$ is the associated spectral
resolution. See, for instance, \cite{Lax}. Evidently the operator-valued function $\lambda \to E_{\lambda}$ can be expressed in terms of the sequence $P_{k}$: $f \to f*m_k$, $k=0,1,\ldots\,$.

Observe that, for each $k\geq 0$, $P_k: L^2 \to L^2$ is an orthoprojector.
Moreover, the sequence $P_k$ decreases and $P_k \to 0$
strongly. Indeed, let $\delta_a$ be the function which takes value $1$ at $x=a$ and $0$ otherwise.
For $f=\delta_a$, we have
\begin{align*}
\parallel P_kf \parallel^2=(P_kf,f)=1/|G_k| \to 0 \quad \textrm{ as } \quad k\to
\infty.
\end{align*}
Hence the same is true for $f\in A$, the set of all finite linear
combinations of the elements $\{\delta_a\}_{a \in G}$. Since $A$ is
dense in $L^2$ and $\parallel P_k \parallel=1$ for all $k$, the
result follows.

Put $\sigma(k)=\sum_{i>k}c_i$ and define an operator-valued function $\lambda \to E_{\lambda}$
as follows
\begin{align*}
E_{\lambda}=\left\{ \begin{array}{ll} 0, & \quad -\infty
< \lambda \leq 0,
\\ P_k & \quad \sigma(k) \leq \lambda < \sigma(k-1),\quad \textrm{for} \quad k\geq 1,\\
I & \quad \sigma(0) \leq \lambda < +\infty.\\
\end{array} \right.
\end{align*}

It is easy to see that $\lambda \to E_{\lambda}$ is a spectral resolution, that is:
\begin{itemize}
\item $E_{\lambda}$ is an orthoprojector, for each $\lambda$, 
\item $E_{\lambda_1}E_{\lambda_2}=E_{\min\{\lambda_1,\lambda_2\}}$, for any $\lambda_1$, $\lambda_2$,
\item $E_{\lambda}\to 0$ at $-\infty$ and $E_{\lambda}\to
I$ at $+\infty$ (strongly),
\item $\lambda \to E_{\lambda}$ is right-continuous.
\end{itemize}
The operator-valued function $\lambda \to E_{\lambda}$ has jumps at the points
$\lambda_k =\sigma (k)$, $k=0,1,2,\dots$ and only at these points.
The heights of the jumps of the function $\lambda \to E_{\lambda}$ at the points $\sigma(k)$ are
\begin{align}\label{equat5.1}
E_{\sigma(k)}-E_{\sigma(k)-}=E_{\sigma(k)}-E_{\sigma(k+1)}=P_k-P_{k+1}.
\end{align}
\begin{pro}\label{pro4.1}
Let  $\lambda \to E_{\lambda}$ be as above, then
\begin{align*}
-\Delta=\int_0^{\infty} \lambda dE_{\lambda}.
\end{align*}
\end{pro}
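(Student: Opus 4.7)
The plan is to verify the formula by directly diagonalizing $-\Delta$ using the algebra of the $P_k$'s, and then matching its spectral decomposition with the step function $\lambda\mapsto E_\lambda$ given above the statement. Since the excerpt has already checked that $\lambda\mapsto E_\lambda$ is a bona fide resolution of the identity (orthoprojectors, multiplicative under $\min$, right-continuous, correct limits at $\pm\infty$) and $-\Delta=I-P$ is bounded and self-adjoint, the spectral theorem reduces the proposition to the identification of the spectral family of $-\Delta$ with the prescribed $E_\lambda$, followed by evaluation of the resulting Stieltjes sum.

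The algebraic engine is the identity $m_k\ast m_\ell=m_{\max(k,\ell)}$, which gives $P_kP_\ell=P_{\max(k,\ell)}$. From this I read off: the $P_k$'s commute, the ranges $V_k:=\mathrm{ran}(P_k)$ are nested $V_0\supset V_1\supset\cdots$, the differences $P_{k-1}-P_k$ ($k\geq 1$) together with $I-P_0$ are pairwise orthogonal projections, and they sum strongly to $I$ (using $P_k\to 0$ strongly, already noted in the excerpt). The heart of the proof is a direct one-line calculation: for $f\in(P_{k-1}-P_k)L^2$, so that $P_{k-1}f=f$ and $P_kf=0$, the relation $P_jP_{k-1}=P_{\max(j,k-1)}$ forces $P_jf=f$ when $j\leq k-1$ and $P_jf=0$ when $j\geq k$, hence
\[
Pf=\sum_{j}c_jP_jf=(c_0+\cdots+c_{k-1})f=(1-\sigma(k-1))f,
\]
and therefore $-\Delta f=\sigma(k-1)f$. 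An entirely analogous computation on $(I-P_0)L^2$ yields $-\Delta f=f$, attaching eigenvalue $1=\sigma(-1)$ to the top jump. This identifies each spectral eigenspace of $-\Delta$ with the corresponding jump of $E_\lambda$ recorded in (5.1).

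The concluding step is purely telescoping: the Stieltjes sum over the jumps collapses as
\[
\int_0^\infty\lambda\,dE_\lambda=\sum_{k}\sigma(k-1)(P_{k-1}-P_k),
\]
and an Abel rearrangement using $\sigma(k-1)-\sigma(k)=c_k$ together with $P_k\to 0$ strongly recovers $I-\sum_kc_kP_k=I-P=-\Delta$. The main pitfall, and the one most deserving of care, is the bookkeeping at the boundary of the spectrum: the jump of $E_\lambda$ at the top must correctly absorb the eigenvalue-$1$ contribution coming from $(I-P_0)L^2$, since otherwise the telescoping would not close. This is a matter of careful indexing rather than a substantive obstruction, and once it is pinned down the verification is a direct computation.
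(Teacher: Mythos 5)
Your argument is correct, and it reaches the identity by a genuinely different route from the paper's. The paper proves the proposition in one three-line forward computation: it writes $-\Delta=\sum_k c_k(I-P_k)$, substitutes $c_k=\sigma(k-1)-\sigma(k)=\int_{\sigma(k)}^{\sigma(k-1)}d\lambda$ and $I-P_k=E_{\sigma(0)}-E_{\sigma(k)}$, observes that $E_\lambda$ is constant equal to $E_{\sigma(k)}$ on $[\sigma(k),\sigma(k-1))$ so the sum is $\int_0^{\infty}(E_{\sigma(0)}-E_\lambda)\,d\lambda$, and finishes by integration by parts for Stieltjes integrals. You instead diagonalize $-\Delta$ first: the relation $P_jP_k=P_{\max(j,k)}$ makes the ranges of $P_{k-1}-P_k$ mutually orthogonal eigenspaces with eigenvalue $\sigma(k-1)$, and you then recover $I-P$ from the jump sum by Abel summation. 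Both proofs run on the same fuel ($m_j*m_k=m_{\max(j,k)}$, the telescoping of the $\sigma(k)$, and $P_k\to 0$ strongly), but yours produces the full eigenvalue/eigenspace decomposition as a byproduct --- something the paper only establishes afterwards, in the proposition describing $\mathrm{Spec}_{L^2}(-\Delta)$ --- whereas the paper's computation is shorter precisely because it never needs to identify eigenspaces.

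Two small points of hygiene. First, your justification that $P_jf=0$ for $j\geq k$ on $(P_{k-1}-P_k)L^2$ should invoke $P_j=P_jP_k$, whence $P_jf=P_j(P_kf)=0$; the relation $P_jP_{k-1}=P_j$ you cite is circular for those $j$. The fix is immediate. Second, you are right to worry about the top of the spectrum: the displayed $E_\lambda$ jumps from $P_1$ to $I$ at $\sigma(0)$, hence assigns all of $(I-P_1)L^2$, including $(I-P_0)L^2$, the eigenvalue $\sigma(0)$, while your computation gives $-\Delta f=f$ on $(I-P_0)L^2$. These are reconciled only because the paper tacitly takes $G_0=\{e\}$, so that $P_0=I$ and $(I-P_0)L^2=\{0\}$ --- a convention it uses elsewhere (e.g.\ $N(\sigma(0))=1/|G_0|$ together with $E_{\sigma(0)}=I$ forces $|G_0|=1$). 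Under that convention your telescoping closes exactly as you describe; without it the proposition would need the jump at $\lambda=1$ you identified.
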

\begin{proof}
The following chain of equalities holds true
\begin{align*}
-\Delta=& \sum_{k=0}^{\infty} c_k (I-P_k)=\sum_{k=0}^{\infty}
[\sigma(k-1)-\sigma(k)](E_{\sigma(0)}-E_{\sigma(k)})=\\=&\sum_{k=0}^{\infty}\left(E_{\sigma(0)}-E_{\sigma(k)} \right)\int_{\sigma(k)}^{\sigma(k-1)}d\lambda=\int_0^{\infty}
(E_{\sigma(0)}-E_{\lambda})d\lambda=\int_0^{\infty} \lambda
dE_{\lambda},
\end{align*}
where in the last equality we use the integration by parts
formula \cite{Shi}. 
\end{proof}

It is well-known that if $A$ is a Markov generator and $0<\alpha \leq 1$ then $-(-A)^{\alpha}$ (more generally, $-\phi(-A)$, where $\phi$ is a Bernstein function\footnote{In the discrete-time setting we assume that $\phi(0)=0$ and $\phi(1)=1$.}) is again a Markov generator.  See e.g. \cite{BenSC}, \cite{L}, \cite{Shi}. If $\alpha>1$, then $-(-A)^{\alpha}$ is not a Markov generator in general. For instance, let $P$ be the transition operator of the simple random walk on $\mathbb{Z}$ and let $A=P-I$ be its Markov generator. Define $\bar{A}:=-(-A)^2$. $\bar{A}$ is not a Markov generator because its transition operator $\bar{P}=I+\bar{A}$ is not positivity-preserving:
\begin{align*}
\bar{P}\textbf{1}_{\{0\}}(0)=(2P-P^2)\textbf{1}_{\{0\}}(0)=-P^2\textbf{1}_{\{0\}}(0)=-\frac{1}{2}<0.
\end{align*}
Similarly, using Taylor expansion, one can show that $-(-A)^{\alpha}$ is not a Markov generator for any $\alpha>1$ in this example.

\begin{pro} Let $X(n)$ be the random walk on $G$ with law $\mu=\mu(c)$. Let $P$ be its transition operator and let $\Delta=P-I$ be its Laplacian. For any continuous increasing function $\phi:[0,1]\to [0,1]$ such that $\phi(0)=0$ and $\phi(1)=1$, the operator $-\phi(-\Delta)$ is a Laplacian, that is,
\begin{align*}
-\phi(-\Delta)=P_{\phi}-I,
\end{align*}
where
\begin{align*}
P_{\phi}f=f*\phi(\mu)
\end{align*}
and
\begin{align*}
 \phi(\mu)= \sum_{k \geq 0}\left\{\phi(\sigma(k-1))-\phi(\sigma(k)) \right\} m_k.
\end{align*}
In particular, for any $\alpha>0$, $-(-\Delta)^{\alpha}$ is the Laplacian defined by the random walk with law $\mu(c(\alpha))$,
\begin{align*}
\mu (c(\alpha))=\sum_{k \geq 0} \left\{(\sigma(k-1))^{\alpha}-(\sigma(k))^{\alpha}\right\} m_k.
\end{align*}
\end{pro}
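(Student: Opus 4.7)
The plan is to apply the Borel functional calculus to the explicit spectral resolution $\lambda\mapsto E_\lambda$ produced in Proposition \ref{pro4.1} and then rearrange the resulting series by telescoping (Abel summation) to recognize the convolution structure.

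Concretely, first I would record that the jumps $Q_k := E_{\sigma(k)}-E_{\sigma(k)-}=P_k-P_{k+1}$ (formula (\ref{equat5.1})) are mutually orthogonal self-adjoint projections — this is immediate from $P_k P_j=P_{\max(k,j)}$ — and that $\sum_{k\ge 0}Q_k=I$ in the strong sense. Consequently, because the spectrum of $-\Delta$ is the countable set $\{\sigma(k):k\ge 0\}\cup\{0\}$, the functional calculus collapses the integral to a norm-convergent series, giving
\begin{align*}
\phi(-\Delta)=\int_0^\infty \phi(\lambda)\,dE_\lambda=\sum_{k\ge 0}\phi(\sigma(k))(P_k-P_{k+1}).
\end{align*}
(Continuity of $\phi$ at $0$ is what guarantees that the contribution of the accumulation point $\sigma(k)\downarrow 0$ vanishes; this is the unique spot where the hypothesis $\phi(0)=0$ with $\phi$ continuous is needed.)

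Next I would rearrange this sum by Abel summation. Regrouping the two series by matching the common factor $P_k$ and using $P_0=I$ together with the boundary value $\sigma(-1)=1$, $\phi(\sigma(-1))=\phi(1)=1$, yields
\begin{align*}
\phi(-\Delta)=\phi(\sigma(0))I+\sum_{k\ge 1}\bigl[\phi(\sigma(k))-\phi(\sigma(k-1))\bigr]P_k,
\end{align*}
so after negating and adding the $k=0$ term back (which equals $[\phi(\sigma(-1))-\phi(\sigma(0))]I=[1-\phi(\sigma(0))]I$),
\begin{align*}
-\phi(-\Delta)=\sum_{k\ge 0}\bigl[\phi(\sigma(k-1))-\phi(\sigma(k))\bigr]P_k \;-\; I.
\end{align*}
Since convolution is linear in the measure, the first sum is precisely the right-convolution operator by $\phi(\mu)$, so $-\phi(-\Delta)=P_\phi-I$ as claimed.

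It then remains to verify that $\phi(\mu)$ is genuinely a probability measure, i.e.\ that $P_\phi-I$ really is a Markov Laplacian and not merely a formal self-adjoint operator. Non-negativity of the coefficients is immediate from $\phi$ being non-decreasing together with $\sigma$ being strictly decreasing. Telescoping the coefficients and invoking the two boundary values gives
\begin{align*}
\sum_{k\ge 0}\bigl[\phi(\sigma(k-1))-\phi(\sigma(k))\bigr]=\phi(1)-\lim_{k\to\infty}\phi(\sigma(k))=1-\phi(0)=1,
\end{align*}
so $\phi(\mu)$ has total mass $1$. The specialization $\phi(t)=t^\alpha$ ($\alpha>0$) is an admissible function on $[0,1]$, producing the measure $\mu(c(\alpha))$ with coefficients $c(\alpha)_k=\sigma(k-1)^\alpha-\sigma(k)^\alpha$. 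The main conceptual point to highlight (in view of the $\mathbb{Z}$-counterexample preceding the statement) is that \emph{here} the fractional power $(-\Delta)^\alpha$ remains a Markov Laplacian even for $\alpha>1$; the mechanism is purely the monotonicity of $t\mapsto t^\alpha$ on $[0,1]$ combined with the monotonicity of $k\mapsto\sigma(k)$, which is a rigid feature of the semigroup generated by a convex combination of idempotent Haar measures. No genuine analytic obstacle appears: the entire proof reduces to carefully handling the telescoping at the left endpoint $k=0$, where the identifications $P_0=I$ and $\sigma(-1)=1$ must be used in concert.
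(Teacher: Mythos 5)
Your proof is correct and follows essentially the same route as the paper: both express $\phi(-\Delta)$ through the explicit spectral resolution of Proposition \ref{pro4.1} and then telescope, the paper via the Stieltjes integration-by-parts identity $\int_0^1(1-\phi(\lambda))\,dE_\lambda=\int_0^1 E_\lambda\,d\phi(\lambda)$ and you via the equivalent Abel summation of $\sum_{k\ge 0}\phi(\sigma(k))(P_k-P_{k+1})$, with the same implicit use of the convention $G_0=\{e\}$, i.e.\ $P_0=I$. Your extra verification that the coefficients of $\phi(\mu)$ are non-negative and sum to one (which the paper leaves implicit) is a welcome completion of the claim that $P_\phi-I$ is genuinely a Markov generator.
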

\begin{proof}
Proposition \ref{pro4.1} and the integration by parts formula show that
\begin{align*}
P_{\phi}:=&I-\phi(-\Delta)=\int_0^1(1-\phi(\lambda))dE_{\lambda}=\int_0^1 E_{\lambda}d\phi(\lambda)=\\
=&\sum_{k \geq 0}\left\{\phi(\sigma(k-1))-\phi(\sigma(k))\right\}E_{\sigma(k)}=\\
=&\sum_{k \geq 0}\left\{\phi(\sigma(k-1))-\phi(\sigma(k))\right\}P_k.
\end{align*}
It follows that
\begin{align*}
P_{\phi}(f)=&\sum_{k \geq 0}\left\{\phi(\sigma(k-1))-\phi(\sigma(k))\right\}P_k(f)=\\
=&\sum_{k \geq 0}\left\{\phi(\sigma(k-1))-\phi(\sigma(k))\right\}f*m_k=\\
=& f*\left(\sum_{k \geq 0}\left\{\phi(\sigma(k-1))-\phi(\sigma(k))\right\}m_k\right)=f*\phi(\mu).
\end{align*}
The proof is finished.
\end{proof}

\begin{pro}
The spectrum $Spec_{L^2}(-\Delta)$ of the self-adjoint operator $-\Delta: L^2 \to
L^2$ is of the form:
\begin{align*}
Spec_{L^2}(-\Delta)=\{0, \ldots, \sigma(k), \ldots, \sigma(1),
\sigma(0)\}.
\end{align*}
Each $\sigma(k)$ is an eigenvalue of $-\Delta$ having infinite
multiplicity. 
\end{pro}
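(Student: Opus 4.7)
The plan is to read the spectrum straight off the explicit spectral resolution $\lambda\mapsto E_\lambda$ constructed just before Proposition~\ref{pro4.1}. Since $\lambda\mapsto E_\lambda$ is piecewise constant and jumps only at the points $\sigma(k)$, $k\geq 0$, the spectral measure $dE_\lambda$ is purely atomic and supported on $\{\sigma(k):k\geq 0\}$. Its support is the closure of this set, and since $\sigma(k)$ is the tail of the convergent series $\sum_i c_i=1$ one has $\sigma(k)\to 0$. Hence $Spec_{L^2}(-\Delta)=\{0\}\cup\{\sigma(k):k\geq 0\}$.

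To identify each $\sigma(k)$ as an eigenvalue I would invoke equation (\ref{equat5.1}): the jump of $E_\lambda$ at $\sigma(k)$ equals the orthogonal projection $P_k-P_{k+1}$. Functional calculus applied to Proposition~\ref{pro4.1} then gives
\begin{equation*}
(-\Delta)(P_k-P_{k+1})f=\sigma(k)(P_k-P_{k+1})f\qquad\text{for every }f\in L^2,
\end{equation*}
so $\mathrm{Range}(P_k-P_{k+1})$ is exactly the $\sigma(k)$-eigenspace.

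The one substantive step is to show that this range is infinite dimensional. For that I rely on the description of $P_jf=f*m_j$ as the orthogonal projection of $L^2$ onto functions that are constant on the cosets $xG_j$. Since the chain $\{G_k\}$ is strictly increasing, $[G_{k+1}:G_k]\geq 2$; since $G$ is infinite and $G_{k+1}$ is finite, $G$ splits into infinitely many cosets of $G_{k+1}$. Inside each such coset $yG_{k+1}$ one can exhibit $[G_{k+1}:G_k]-1\geq 1$ linearly independent functions that are constant on the sub-cosets $yhG_k$, $h\in G_{k+1}/G_k$, with values summing to zero over $h$; these lie in $\mathrm{Range}\,P_k\cap\ker P_{k+1}=\mathrm{Range}(P_k-P_{k+1})$. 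Functions coming from distinct cosets of $G_{k+1}$ have disjoint supports, hence are mutually orthogonal, and there are infinitely many of them, giving infinite multiplicity.

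The main obstacle is precisely this coset-counting argument; everything else is an essentially formal consequence of the spectral resolution already established. It is worth noting in passing that $0$ is not itself an eigenvalue: since $P_k\to 0$ strongly (a fact proved just before Proposition~\ref{pro4.1}), $E_{0+}=\lim_{k\to\infty}P_k=0=E_0$, so the spectral measure carries no atom at $0$; thus $0$ enters $Spec_{L^2}(-\Delta)$ only as an accumulation point of the eigenvalues $\sigma(k)$, consistent with the statement of the proposition.
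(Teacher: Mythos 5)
Your proposal is correct and follows essentially the same route as the paper: read the spectrum off the explicit spectral resolution, identify the $\sigma(k)$-eigenspace as $\mathrm{Range}(P_k-P_{k+1})$ via the jump formula, and produce infinitely many mutually orthogonal eigenfunctions with disjoint supports, one for each of the infinitely many cosets of $G_{k+1}$ in $G$. The only cosmetic difference is that the paper realizes these eigenfunctions as $f_{k,a}=(P_k-P_{k+1})\delta_a$ for points $a$ chosen pairwise far apart in the ultrametric $\rho$, whereas you build mean-zero $G_k$-locally-constant functions directly in each $G_{k+1}$-coset; these are the same functions up to normalization.
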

\begin{proof}
That each $\sigma(k)$ is an eigenvalue of the operator $-\Delta$ follows from Proposition \ref{pro4.1}. According to equation (\ref{equat5.1}), the corresponding space $\mathcal{H}_k$ of $\sigma(k)$-eigenfunctions is  $\mathcal{H}_k=(P_{k}-P_{k+1})L^2$. What is left is to show that 
$\mathcal{H}_k$ is
infinite dimensional. For
$a \in G$ and $k=0,1,\ldots$ define $f_{k,a}=(P_{k}-P_{k+1})\delta_a$, and write
\begin{align*}
P_l\delta_a(x)=& \delta_a*m_l(x)=\int_{G_l}
\delta_a(xy)dm_l(y)
=\int_{G_l}\delta_e(a^{-1}xy)dm_l(y)= \\ =& \left\{
\begin{array}{cl} 1/|G_l|, & \quad \textrm{if } x\in a \cdot G_l
\\ 0 & \quad \textrm{otherwise}\\
\end{array} \right. = \frac{1}{|G_l|}\textbf{1}_{a\cdot G_l}(x).
\end{align*}
Hence, the function $f_{k,a}$ is supported by the set $a \cdot G_{k+1}$. In
particular, for each $a,b \in G$,
\begin{align*}
supp f_{k,a} \cap supp f_{k,b}=\emptyset
\end{align*}
if and only if $(aG_{k+1}) \bigcap (bG_{k+1})=\emptyset$, equivalently
$ab^{-1} \notin G_{k+1}$.
Define a distance $\rho(x,y)$ on $G$
as follows
\begin{align*}
\rho(x,y)=n \quad \textrm{ if and only if } \quad xy^{-1}\in G_n \backslash
G_{n-1}.
\end{align*}
Evidently $\rho$ is a distance on $G$, and $(G, \rho)$ is a complete metric space. For any $n\in \mathbb{N}$, the subgroup
$G_n$ is a ball of radius $n$ centered at $e$. Now
choose a sequence $\{a_l\}\subset G$ such that
$\rho(e,a_l)=(k+2)l$. Then for any $l>m$,
\begin{align*}
\rho (a_l, a_m)\geq (k+2)l-(k+2)m=(k+2)(l-m)>(k+1).
\end{align*}
Hence for any $l\neq m$, the functions $f_{k, a_l}$ and $f_{k,a_m}$
have disjoint supports and therefore are orthogonal in
$\mathcal{H}_k\subset L^2$. Thus $\mathcal{H}_k$ contains an infinite sequence of
orthogonal elements, hence it is infinite dimensional.
\end{proof}
\begin{pro}\label{pro4.3}
$-\Delta: L^p \to L^p$ is a bounded operator and\\
$1$. $Spec_{L^p}(-\Delta)=Spec_{L^2}(-\Delta)$, for any $p \geq 1$.\\
$2$. (Strong Liouville property) Any function $u\geq 0$ such that $\Delta u=~0$ must be a constant-function. In particular, $0 \in Spec_{L^{\infty}}(-\Delta)$ is an eigenvalue of multiplicity one.
\end{pro}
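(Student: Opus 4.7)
The plan splits along the two claims of the proposition.

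\medskip\noindent\textbf{Boundedness and spectrum.} First, $-\Delta = I-P$ is bounded on every $L^p$, $1\leq p\leq\infty$, because $P$ is convolution with the probability measure $\mu$ and hence a contraction by Young's inequality. The inclusion $Spec_{L^2}(-\Delta)\subseteq Spec_{L^p}(-\Delta)$ is essentially free: the eigenfunctions $f_{k,a}=(P_k-P_{k+1})\delta_a$ built in the preceding proposition are finitely supported, hence lie in every $L^p$, and $-\Delta f_{k,a}=\sigma(k)f_{k,a}$ is a pointwise identity. Combined with closedness of the spectrum and $\sigma(k)\to 0$, this places $\{0\}\cup\{\sigma(k):k\geq 0\}$ inside $Spec_{L^p}(-\Delta)$. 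For the converse inclusion, the cleanest route is Hulanicki's theorem: since $G$ is locally finite hence amenable, and $\mu$ is symmetric, the $L^p$-spectrum of convolution by $\mu$ is independent of $p\in[1,\infty]$. A self-contained alternative exploits the simultaneous diagonalization afforded by $P_jP_k=P_{\max(j,k)}$: on the block $Q_kL^p$ with $Q_k=P_k-P_{k+1}$, $-\Delta$ acts as the scalar $\sigma(k)$, and for $\lambda$ at positive distance from $\{0\}\cup\{\sigma(k)\}$ the resolvent can be assembled block-by-block on finitely supported $f$, then extended by density. Verifying the uniform $L^p$-boundedness of the resolvent is the main technical obstacle, which is why I would invoke Hulanicki.

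\medskip\noindent\textbf{Strong Liouville.} Let $u\geq 0$ satisfy $\Delta u=0$, i.e.\ $u(x)=\sum_k c_kP_ku(x)$ pointwise, the series converging absolutely since $u\geq 0$ and $u(x)<\infty$. Applying $P_k$ to both sides and using $P_jP_k=P_{\max(j,k)}$,
\[
  P_ku=(1-\sigma(k))P_ku+\sum_{j>k}c_jP_ju,\qquad\text{i.e.}\qquad \sigma(k)P_ku=\sum_{j>k}c_jP_ju.
\]
Subtracting the $k$-identity from the $(k-1)$-identity and using $c_k+\sigma(k)=\sigma(k-1)$ yields $\sigma(k-1)P_{k-1}u=\sigma(k-1)P_ku$, whence $P_{k-1}u=P_ku$ for every $k\geq 1$. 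Consequently all $P_ku$ coincide, and $u=\sum_k c_kP_ku=P_ku$ for every $k$. This says $u$ is invariant under right-translation by every element of every $G_k$; since $G=\bigcup_k G_k$, any two elements of $G$ lie in a common $G_k$-coset, so $u$ is constant.

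\medskip\noindent\textbf{The $L^\infty$ eigenspace at $0$.} If $u\in L^\infty$ satisfies $\Delta u=0$, then $u+\Vert u\Vert_\infty\geq 0$ is also harmonic (constants are annihilated by $\Delta$), hence constant by the previous step, and therefore so is $u$. Thus $\ker_{L^\infty}(-\Delta)$ is one-dimensional, spanned by the constants, and $0$ is a simple eigenvalue in $L^\infty$.
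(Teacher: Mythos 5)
Your strong Liouville argument is correct and is essentially the paper's: the paper obtains $P_ku=P_{k+1}u$ by noting that $u_k:=(P_k-P_{k+1})u$ is simultaneously a $\sigma(k)$-eigenfunction of $-\Delta$ and annihilated by $\Delta$, while you telescope the identities $\sigma(k)P_ku=\sum_{j>k}c_jP_ju$; both routes then give $u=P_ku=u*m_k$ for every $k$ and hence constancy on $G=\bigcup_kG_k$. Likewise the easy inclusion $Spec_{L^2}(-\Delta)\subseteq Spec_{L^p}(-\Delta)$ via the finitely supported eigenfunctions $f_{k,a}$ is exactly the paper's step.

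The gap is in the converse inclusion, which is the substance of part 1. What you invoke as ``Hulanicki's theorem'' is not a theorem in the form you state it: Hulanicki's theorem characterizes amenability through $\parallel\mathfrak{L}_\mu\parallel_{L^2\to L^2}=\parallel\mu\parallel_1$ for symmetric $\mu$ (weak containment of the trivial representation); it does not assert $p$-independence of spectra. The $p$-independence of the spectrum of a convolution operator is a genuinely harder statement (due in this direction to Barnes), and the known proofs require more than amenability of $G$ plus symmetry of $\mu$ --- typically symmetry of the Banach $*$-algebra $\ell^1(G)$, which locally finite groups do satisfy but which you would then have to establish and cite correctly. Moreover you explicitly concede that in your self-contained alternative ``verifying the uniform $L^p$-boundedness of the resolvent is the main technical obstacle,'' and that obstacle is precisely what the proposition asks you to overcome. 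The paper removes it by an Abel summation: for $\lambda\notin\{0\}\cup\{\sigma(k):k\geq 0\}$,
\[
\left(-\Delta-\lambda I\right)^{-1}f=\sum_{k\geq 0}\frac{1}{\sigma(k)-\lambda}\,(P_k-P_{k+1})f
= f*\underline m_\lambda+\sum_{k\geq k_0}\frac{c_k}{(\sigma(k)-\lambda)(\sigma(k-1)-\lambda)}\,f*m_k,
\]
where $\underline m_\lambda$ is a finite signed combination of the $m_k$ and the remaining series defines a measure of finite total variation, because $\sum_kc_k<\infty$ and the denominators stay bounded away from zero. The resolvent is therefore itself convolution by a summable measure, hence uniformly bounded on every $L^p$, $1\leq p\leq\infty$, which gives $Spec_{L^p}(-\Delta)\subseteq Spec_{L^2}(-\Delta)$. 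This elementary computation is the step your proposal is missing.
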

\begin{proof}[Proof of 1:]
1. For $p\geq 1$ and $f\in L^p$, $-\Delta f=f-f*\mu \in L^p$. Hence $-\Delta:L^p\to L^p$ is a bounded operator with norm $\parallel -\Delta \parallel_{L^p\to L^p}\leq 2$.
 
Let $f \in \bigcap_{p=1}^{\infty}L^p$ and $\lambda \notin Spec_{L^2}
(-\Delta)$, then
\begin{align*}
-\Delta f = \sum_{k=0}^{\infty} \sigma(k)(P_k-P_{k+1})f=\sum_{k=0}^{\infty}\sigma(k)(f*m_k-f*m_{k+1}).
\end{align*}
It follows that
\begin{align*}
& \left(-\Delta  - \lambda I \right)^{-1}f = \sum_{k=0}^{\infty}
\frac{1}{\sigma(k)-\lambda} (P_k -P_{k+1})f=\\
& = \sum_{k=0}^{\infty}\frac{1}{\sigma(k)-\lambda} (f*m_k
-f*m_{k+1})=\\
& =  \sum_{k=0}^{\infty}\frac{f*m_k}{\sigma(k)-\lambda} -
\sum_{k=0}^{\infty}\frac{f*m_{k+1}}{\sigma(k)-\lambda}= \\
&= A(f) + \sum_{k\geq k_0}\frac{f*m_k}{\sigma(k)-\lambda} -
\sum_{k\geq k_0 -1}\frac{f*m_{k+1}}{\sigma(k)-\lambda}=\\
&= A(f)+ \sum_{k \geq k_0} \left[\frac{1}{\sigma(k)-\lambda} -
\frac{1}{\sigma(k-1)-\lambda} \right]f*m_k=\\
&=A(f)+ \sum_{k \geq
k_0}\frac{c_k}{(\sigma(k)-\lambda)(\sigma(k-1)-\lambda)}f*m_k=A(f)+B(f),
\end{align*}
where $k_0>1$ is chosen such that $\sigma(k)< \lambda$ for $k \geq k_0$. According to our choices, 
$A(f)=f*\underline{m}_{\lambda}$, where $\underline{m}_{\lambda}$ is a finite sum of signed measures having
finite variance. Hence, $A$ is a bounded operator in all $L^p$,
$1\leq p \leq \infty$. Next write
\begin{align*}
B(f)=& \sum_{k \geq k_0}
\frac{c_k}{[\sigma(k)-\lambda ][\sigma(k-1)-\lambda]}f*m_k=\\
=& f*\left( \sum_{k \geq k_0}
\frac{c_k}{[\sigma(k)-\lambda ][\sigma(k-1)-\lambda]}m_k\right)= f*\overline{m}_{\lambda},
\end{align*}
where the measure $\overline{m}_{\lambda} \asymp \sum_{k \geq k_0} c_km_k$ has finite
variance. Hence $B:L^p\to L^p$, $1\leq p \leq \infty$ is a bounded
operator. Thus, for $\lambda \notin Spec_{L^2}(-\Delta)$,
\begin{align*}
& \left(-\Delta  - \lambda I \right)^{-1}:\, \bigcap_{p \geq 1} L^p \to  \bigcap_{p \geq 1} L^p
\end{align*}
is uniformly bounded. Hence $\lambda \notin Spec_{L^p}(-\Delta)$ for all $1\leq p \leq \infty$.
On the other hand all functions $f_{k,a}=(P_k-P_{k+1})\delta_a$ are eigenfunctions of $-\Delta: L^p \to L^p$ corresponding to the eigenvalues $\sigma(k)$, $k=0,1,\dots \,$.\\
\textit{Proof of 2}: For any $k=0,1,\ldots\,$, define $u_k:=(P_k - P_{k+1})u$. We have $\Delta u_k=-\sigma(k)u_k$. On the other hand $\Delta u_k=\Delta (P_k - P_{k+1})u=(P_k - P_{k+1})\Delta u=0$. Hence $u_k\equiv 0$, for any $k=0,1,\ldots\,$. It follows that \begin{align*}
P_ku=P_{k+1}u, \quad k=0,1,\ldots\,.
\end{align*}
In particular,
\begin{align*}
u=P_ku=u*m_k, \quad k=0,1,\ldots\,.
\end{align*}
Observe that for any $a,x \in  G_k$, $P_ku(xa)=P_ku(x)$. In particular, $P_ku(x)=P_ku(e)$, for any $x \in G_k$. Thus, $u$ must be constant on each set $G_k$. It follows that $u$ is constant on $G=\bigcup_k G_k$.
\end{proof}
\begin{rem}
The second statement in Proposition \ref{pro4.3} implies that the set of bounded $\Delta$-harmonic functions (the Poisson boundary) is trivial. In the paper \cite{Kai}, V. Kaimanovich constructed an example of a probability measure $\mu$ on the infinite symmetric group $S_{\infty}$ having a non-trivial Poisson boundary. Of course the measure $\mu$ can not be represented as $\mu=\mu(c)$, for any $c=(c_k)$.
\end{rem}

\section{$L^{2}$-isospectral profile.}

Let $P$ and $\Delta=P-I$ be the transition operator and the
Laplacian defined by the probability measure $\mu=\mu(c)$.
For any subset $U\subset G$ we define the truncated operators
$P_U$ and $\Delta_U$ as follows:
\begin{align*}
P_Uf(x)=\textbf{1}_U(x)P(\textbf{1}_Uf)(x)
\end{align*}
and
\begin{align*}
\Delta_Uf(x)=\textbf{1}_U(x)\Delta(\textbf{1}_Uf)(x).
\end{align*}
Let $m_U$ be the counting measure on $U$ and $L^2(U)=L^2(U,m_U)$. Evidently, the
operators $P_U$ and $-\Delta_U$ can be regarded as operators on
$L^2(U)$, each of which is a bounded and symmetric, and $-\Delta_U$ is non-negative
definite. Moreover, $\Delta_U=P_U-I$.
If $U$ is a finite set, $L^2(U)$ is finite dimensional and its dimension $n$ is equal to $|U|$. 
Therefore the spectrum $Spec_{L^2} (-\Delta_U)$ of
$-\Delta_U$ consists of a finite number of points
$0<\lambda_1(U)\leq \ldots \leq \lambda_n(U)<1$,
repeated according to their multiplicity.

\textit{We define the $L^2$-isospectral profile $\lambda \to
\Lambda(\lambda)$ as follows:}
\begin{align}
\Lambda(\lambda):= \inf_{U:\, |U|\leq \lambda}
\lambda_1(U).
\end{align}

For any function $0<\Lambda_*\leq \Lambda$ the following Faber-Krahn type inequality holds true:
\begin{align*}
\lambda_1(U) \geq \Lambda_*(|U|), \quad \textrm{ for any finite } U \subset G.
\end{align*}
In the general setting of Markov generators this inequality was introduced in \cite{A} (see also \cite{GriTheHeat}, \cite{GriHeatKernel}) to investigate various aspects of the heat kernel behavior. In particular, it was proved in \cite{GriHeatKernel} that under some regularity assumptions on $\Lambda_*$, the Faber-Krahn type inequality is equivalent to the heat kernel estimate 
\begin{align*}
\sup_{x,y\in G} h(t;x,y)\leq \frac{1}{\Phi(t)}, \quad \forall t>0,
\end{align*}
where the functions $\Lambda_*$ and $\Phi$ are related by
\begin{align*}
t=\int_0^{\Phi(t)}\frac{d\lambda}{\lambda \Lambda_*(\lambda)}.
\end{align*}
See \cite{D}, \cite{E}, \cite{F}, \cite{ChP} and \cite{H}.

The main aim of this section is to obtain a lower bound for the function $\lambda \to
\Lambda(\lambda)$  in terms of the
sequences $\{c_k\}$ and $\{|G_k|\}$. Under certain regularity assumptions on $\{c_k\}$ we will obtain two-sided bounds for $\Lambda$ which are comparable in the sense of dilatational equivalence. See Theorem \ref{thm5.1} and Theorem \ref{thm2} below.

We define the function $\mathcal{T}:
\mathbb{R}_{+} \to \mathbb{R}_{+}$ as follows:
\begin{align}
\mathcal{T}(u):= 1- \sum_{i=0}^{\infty} c_i \left( 1 \wedge \frac{u}{|G_i|} \right).
\end{align}

The next proposition easily follows from the very definition of the function $u \to \mathcal{T}(u)$.
\begin{pro}\label{pro2}
The following properties hold true.\\
1) $\mathcal{T}$ is a continuous function.\\
2) $\mathcal{T}$ strictly decreases to zero at infinity.\\
3) $\mathcal{T}$ is convex.\\
4) For $\,|G_k| \leq u < |G_{k+1}|$,
\begin{align*}
\frac{1}{2} \sigma(k+1)< \mathcal{T}(u) < \sigma(k).
\end{align*}
\end{pro}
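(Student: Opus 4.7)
My plan is to decompose $\mathcal{T}$ as a uniformly convergent series of elementary affine pieces and then read off properties (1)--(3) for free, leaving only (4) as genuine work. Write $\mathcal{T}(u) = \sum_{i=0}^{\infty} c_i \tau_i(u)$ with $\tau_i(u) := 1 - (1 \wedge u/|G_i|) = (1 - u/|G_i|)_+$. Each $\tau_i$ is continuous, non-increasing, convex (being a positive part of an affine function), and satisfies $0 \leq \tau_i \leq 1$. Since $\sum c_i = 1$, Weierstrass gives uniform convergence, hence continuity; this proves (1). Convexity of $\mathcal{T}$ in (3) is immediate from termwise convexity. For (2), monotone non-increase is termwise, dominated convergence gives $\mathcal{T}(u) \to 0$ at $\infty$, and strict decrease follows because for every fixed $u$ cofinitely many $\tau_i$ are strictly decreasing at $u$ (and $c_i > 0$).

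The substantive part is (4). For $u \in [|G_k|, |G_{k+1}|)$ I would split the defining sum at index $k$: for $i \leq k$, $u \geq |G_k| \geq |G_i|$ so $1 \wedge u/|G_i| = 1$; for $i \geq k+1$, $u < |G_{k+1}| \leq |G_i|$ so $1 \wedge u/|G_i| = u/|G_i|$. Gathering terms yields the exact identity $\mathcal{T}(u) = \sigma(k) - u \sum_{i \geq k+1} c_i/|G_i|$, from which the strict upper bound $\mathcal{T}(u) < \sigma(k)$ is immediate, since $u > 0$ and each $c_i > 0$.

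For the strict lower bound $\mathcal{T}(u) > \tfrac{1}{2}\sigma(k+1)$ the key ingredient will be the geometric fact that $\{G_k\}$ is a \emph{strictly} increasing chain of finite subgroups, so Lagrange's theorem forces $|G_{i+1}| \geq 2|G_i|$, and in particular $|G_{k+1}|/|G_i| \leq 1/2$ whenever $i \geq k+2$. Isolating the first tail term and using $u < |G_{k+1}|$, I would bound $u \sum_{i \geq k+1} c_i/|G_i| < c_{k+1} + \tfrac{1}{2}\sum_{i \geq k+2} c_i = c_{k+1} + \tfrac{1}{2}\sigma(k+1)$. Substituting into the exact formula and using the telescoping identity $\sigma(k) - c_{k+1} = \sigma(k+1)$ delivers $\mathcal{T}(u) > \sigma(k+1) - \tfrac{1}{2}\sigma(k+1) = \tfrac{1}{2}\sigma(k+1)$. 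This doubling argument, which crucially exploits that the $G_k$ are subgroups and not merely nested finite sets, is the only genuinely quantitative step and is where I expect the sole obstacle to lie; the rest is bookkeeping from the split-at-$k$ formula.
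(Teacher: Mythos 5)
Your proof is correct and complete; the paper itself offers no proof here, merely asserting that the proposition ``easily follows from the very definition,'' and your write-up is exactly the intended verification: the split of the sum at index $k$ giving $\mathcal{T}(u)=\sigma(k)-u\sum_{i\geq k+1}c_i/|G_i|$, together with the Lagrange-theorem doubling $|G_i|\geq 2|G_{k+1}|$ for $i\geq k+2$, which is the same index-doubling fact the authors invoke later in Proposition \ref{proposit6.3}. Nothing to add.
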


\begin{thm}\label{thm5.1}
For any finite set $U\subset G$, the following inequality holds:
\begin{align*}
\lambda_1(U) \geq \mathcal{T}(|U|).
\end{align*}
In particular, for any $\lambda \geq 1$,
\begin{align*}
\Lambda(\lambda) \geq \mathcal{T}(\lambda).
\end{align*}
\end{thm}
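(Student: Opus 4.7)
The plan is to expand $-\Delta$ as $\sum_k c_k(I-P_k)$ and to bound each quadratic form $((I-P_k)f,f)$ from below for $f$ supported on a finite set $U$. The payoff comes from the fact, already noted in the text, that each $P_k$ is an orthogonal projection: indeed $m_k \ast m_k = m_k$, so $P_k^2 = P_k$, and $P_k$ is self-adjoint. Consequently $((I-P_k)f,f)=\|f\|_2^2-\|P_kf\|_2^2$, and the entire problem reduces to an upper bound on $\|P_kf\|_2^2$ in terms of $\|f\|_2^2$ and the ratio $|U|/|G_k|$.

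Next I would compute $P_kf$ cosetwise. Because $P_kf(x)=|G_k|^{-1}\sum_{z\in xG_k}f(z)$, the function $P_kf$ is constant along each left coset $C=xG_k$, taking the value $|G_k|^{-1}\sum_{z\in C\cap U}f(z)$ when $f$ is supported on $U$. Cauchy--Schwarz gives
\begin{equation*}
|P_kf|_{\,C}^{\,2}\;\le\;\frac{n_C}{|G_k|^{2}}\sum_{z\in C\cap U}|f(z)|^{2},\qquad n_C:=|C\cap U|.
\end{equation*}
Summing over cosets and using that each coset contributes $|G_k|$ points to the $\ell^2$-norm, I obtain
\begin{equation*}
\|P_kf\|_2^{2}\;\le\;\sum_C\frac{n_C}{|G_k|}\sum_{z\in C\cap U}|f(z)|^{2}.
\end{equation*}
The key combinatorial observation is that $n_C\le \min(|G_k|,|U|)$, since $C\cap U$ sits inside both a coset of $G_k$ and the set $U$. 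Hence $n_C/|G_k|\le 1\wedge(|U|/|G_k|)$, uniformly in $C$, and the inner sum reassembles into $\|f\|_2^2$ to give $\|P_kf\|_2^2\le (1\wedge |U|/|G_k|)\|f\|_2^2$.

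Putting these pieces together yields $((I-P_k)f,f)\ge (1-1\wedge|U|/|G_k|)\|f\|_2^{2}$, and after weighting by $c_k$ and summing,
\begin{equation*}
(-\Delta f,f)\;\ge\;\|f\|_2^{2}\sum_{k\ge0}c_k\!\left(1-1\wedge\frac{|U|}{|G_k|}\right)=\mathcal{T}(|U|)\,\|f\|_2^{2},
\end{equation*}
which is precisely the Faber--Krahn bound $\lambda_1(U)\ge\mathcal{T}(|U|)$, and the profile bound $\Lambda(\lambda)\ge\mathcal{T}(\lambda)$ follows by taking the infimum over $U$ with $|U|\le\lambda$ and using that $\mathcal{T}$ is nonincreasing (Proposition~\ref{pro2}). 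The only genuinely subtle point in the argument is the cosetwise Cauchy--Schwarz step together with the observation $n_C\le\min(|G_k|,|U|)$; everything else is formal once one recognises each $P_k$ as an orthogonal projection.
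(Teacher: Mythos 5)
Your proposal is correct and follows essentially the same route as the paper: the paper also decomposes the quadratic form coset by coset, writes $\sum_{x,y\in U:\,y^{-1}x\in G_i}f(x)f(y)=\sum_{A}\bigl(\sum_{x\in A\cap U}f(x)\bigr)^2$, and applies Cauchy--Schwarz together with $|A\cap U|\leq |G_i|\wedge|U|$ to get $(f,P_if)\leq (1\wedge |U|/|G_i|)\|f\|_2^2$, which is exactly your bound since $(f,P_if)=\|P_if\|_2^2$ for the orthogonal projection $P_i$. The only cosmetic difference is that you phrase the estimate as a norm bound on $P_kf$ rather than as a direct bound on the bilinear form.
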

\begin{proof}
Choose a finite set $U\subset G$, a function $f$ such that $supp f\subset U$, and write
\begin{align*}
(f, P_if)=&\sum_{x\in U} f(x)P_if(x)=\sum_{x,y\in U} f(x)f(y) m_i(y^{-1}x)=\\
=& \frac{1}{|G_i|} \sum_{x,y \in U: \, y^{-1}x \in G_i} f(x)f(y).
\end{align*}
Let $[G:G_i]$ be the set of all co-sets $A=aG_i$, $a \in G$. Then,
\begin{align*}
 & \sum_{x,y \in U: \, y^{-1}x \in G_i} f(x)f(y)=\sum_{A\in [G:G_i]} \sum_{x,y \in A \cap
 U}f(x)f(y)=\\
& =\sum_{A\in [G:G_i]} \left( \sum_{x \in A \cap U} f(x)\right)^2
\leq  \sum_{A\in [G:G_i]} |A \cap U| \sum_{x \in A \cap U} f(x)^2 \leq \\
& \leq  \max_{A \in [G:G_i]} |A \cap U| \sum_{A \in [G:G_i]}
\sum_{x\in A \cap U} f(x)^2= \max_{A \in [G:G_i]} |A \cap U| \parallel f\parallel _{L^2(U)}^2 \leq \\
& \leq ( |G_i| \wedge |U|) \parallel f\parallel _{L^2(U)}^2.
\end{align*}
Hence, for any $i=0,1,2,\ldots \,$, and any finite set $U\subset
G$ the operator norm of the truncated operator $P_{i,U}$ can be
estimated as follows:
\begin{align*}
\parallel  P_{i,U} \parallel =& \sup \{ (f,P_if): \, supp f \subseteq U, \parallel f\parallel _{L^2}=1\}\leq \\
\leq & \frac{1}{|G_i|} ( |G_i| \wedge |U|)= \left( 1 \wedge  \frac{|U|}{|G_i|}\right).
\end{align*}
It follows, that
\begin{align*}
\parallel  P_U \parallel \leq \sum_{i=0}^{\infty} c_i \parallel  P_{i,U} \parallel \leq \sum_{i=0}^{\infty} c_i \left( 1 \wedge  \frac{|U|}{|G_i|}\right).
\end{align*}
With this inequality in hands the computations of $\lambda_1(U)$ become straightforward:
\begin{align*}
\lambda_1 (U)=& \min \{ (-\Delta_Uf,f): \, supp f \subseteq U, \parallel f\parallel_{L^2}=1\}=\\
=& \min \{ 1-(P_Uf,f): \, supp f \subseteq U, \parallel f\parallel_{L^2}=1\}=\\
=& 1- \max \{ (P_Uf,f): \, supp f \subseteq U, \parallel f\parallel_{L^2}=1\}=\\
=& 1- \parallel P_U \parallel \geq 1- \sum_{i=0}^{\infty} c_i \left( 1 \wedge  \frac{|U|}{|G_i|}\right) = \mathcal{T}(|U|).
\end{align*}
Since $\lambda \to \mathcal{T}(\lambda)$ is a decreasing function, we obtain
\begin{align*}
\Lambda(\lambda)=\inf \{ \lambda_1 (U): \, |U|\leq
\lambda \} \geq \mathcal{T}(\lambda).
\end{align*}
The proof is finished.
\end{proof}
\begin{pro}\label{proposit6.3}
For any $k=0,1,2,\ldots \,$,
\begin{align*}
\lambda_1(G_k)=\mathcal{T}(|G_k|).
\end{align*}
In particular, $\frac{1}{2} \sigma(k) < \lambda_1(G_k) < \sigma(k)$.
\end{pro}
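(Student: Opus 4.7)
\smallskip

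My plan is to combine the already-established lower bound from Theorem~\ref{thm5.1} with an explicit eigenfunction computation that achieves equality. Since Theorem~\ref{thm5.1} gives $\lambda_1(G_k) \geq \mathcal{T}(|G_k|)$, it is enough to exhibit a test function $f$ supported on $G_k$ whose Rayleigh quotient $(-\Delta_{G_k}f,f)/\|f\|_2^2$ equals $\mathcal{T}(|G_k|)$. The natural candidate is $f = \mathbf{1}_{G_k}$, and the key calculation will be that $\mathbf{1}_{G_k}$ is in fact an exact eigenfunction of $P_{G_k}$ (equivalently, of $-\Delta_{G_k}$).

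The core computation is the following case analysis for $x\in G_k$. If $i\leq k$, then $G_i\subset G_k$, so $xG_i\subset G_k$ and $P_i\mathbf{1}_{G_k}(x)=|xG_i\cap G_k|/|G_i|=1$. If $i>k$, then $G_k\subset G_i$, so $x^{-1}y\in G_i$ for every $y\in G_k$, giving $P_i\mathbf{1}_{G_k}(x)=|G_k|/|G_i|$, a constant on $G_k$. Summing against $\{c_i\}$, for every $x\in G_k$,
\begin{align*}
P\mathbf{1}_{G_k}(x)=\sum_{i=0}^{\infty}c_i\left(1\wedge\frac{|G_k|}{|G_i|}\right)=1-\mathcal{T}(|G_k|).
\end{align*}
Hence $P_{G_k}\mathbf{1}_{G_k}=(1-\mathcal{T}(|G_k|))\mathbf{1}_{G_k}$, so $\mathbf{1}_{G_k}$ is a $\mathcal{T}(|G_k|)$-eigenfunction of $-\Delta_{G_k}$. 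Consequently $\lambda_1(G_k)\leq\mathcal{T}(|G_k|)$, and combined with Theorem~\ref{thm5.1} this forces $\lambda_1(G_k)=\mathcal{T}(|G_k|)$.

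For the \emph{in particular} assertion, I would expand
\begin{align*}
\mathcal{T}(|G_k|)=\sigma(k)-\sum_{i>k}c_i\frac{|G_k|}{|G_i|}.
\end{align*}
The upper bound $\lambda_1(G_k)<\sigma(k)$ is immediate since the subtracted sum is strictly positive. For the lower bound, I use that the $G_i$ form a strictly increasing chain of finite groups, so Lagrange forces $|G_{k+j}|\geq 2^j|G_k|$ for $j\geq 1$; therefore $c_i|G_k|/|G_i|\leq c_i/2^{i-k}$, and
\begin{align*}
\sum_{i>k}c_i\frac{|G_k|}{|G_i|}\leq\sum_{j\geq 1}\frac{c_{k+j}}{2^j}<\frac{1}{2}\sum_{j\geq 1}c_{k+j}=\frac{\sigma(k)}{2},
\end{align*}
where strictness comes from $c_{k+2}/4<c_{k+2}/2$ and $c_{k+2}>0$. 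This yields $\mathcal{T}(|G_k|)>\sigma(k)/2$.

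There is no real obstacle here: the heart of the argument is just recognizing that $\mathbf{1}_{G_k}$ saturates the operator-norm bound used in Theorem~\ref{thm5.1} (this is visible from the proof of that theorem, where the Cauchy--Schwarz step in the estimate $(f,P_if)\leq(|G_i|\wedge|U|)\|f\|_2^2/|G_i|$ becomes an equality precisely for the constant function on $U=G_k$, and where the cosets $A\cap U$ are either empty or all of $A$). The only mildly delicate point is confirming strict inequality in $\frac{1}{2}\sigma(k)<\lambda_1(G_k)$, which as shown above follows from the strictness of the chain $G_k\subsetneq G_{k+1}\subsetneq\cdots$.
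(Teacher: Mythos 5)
Your proof is correct, and it takes a route that differs from the paper's in a meaningful way. The paper regards $P_{G_k}$ as a convolution operator on the finite group $G_k$ given by the restricted measure $\mu_{G_k}=\sum_{i<k}c_im_i+\bigl(\sum_{i\geq k}c_i|G_k|/|G_i|\bigr)m_k$, and then invokes the fact that on a finite (more generally amenable) group the norm of a convolution operator equals the total mass of the measure, so that $\lambda_1(G_k)=1-\|P_{G_k}\|=1-\mu_{G_k}(1)=\mathcal{T}(|G_k|)$ in one stroke. You instead split the equality into two inequalities: the lower bound you import from Theorem~\ref{thm5.1}, and the upper bound you obtain by checking directly that $\mathbf{1}_{G_k}$ is an exact eigenfunction of $P_{G_k}$ with eigenvalue $1-\mathcal{T}(|G_k|)$. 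Your coset computation of $P_i\mathbf{1}_{G_k}$ is correct, and it is in effect the elementary reason \emph{why} the convolution norm equals the mass in this setting, so your argument is more self-contained (it avoids citing the amenability fact) at the cost of being slightly longer. For the ``in particular'' statement you are actually more careful than the paper: the paper's bound $|G_k|/|G_i|\leq 1/2$ for $i>k$ only gives $\lambda_1(G_k)\geq\frac{1}{2}\sigma(k)$ as written, whereas your use of $|G_{k+j}|\geq 2^j|G_k|$ (Lagrange applied along the strictly increasing chain) together with $c_{k+2}>0$ genuinely produces the strict inequality $\lambda_1(G_k)>\frac{1}{2}\sigma(k)$ claimed in the statement. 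Both arguments are sound; yours trades the external operator-norm fact for an explicit eigenfunction and a prior theorem.
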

\begin{proof}
Since $G_k$ is a subgroup of $G$, one can regard $P_{G_k}$ as a convolution operator on $G_k$. Indeed, for $x \in G_k$ and $f$ such that $supp f \subseteq G_k$ we have
\begin{align*}
P_{G_k}f(x)=&\int_{G_k} f(xy)d\mu(y)= \int_{G_k} f(xy)d\mu_{G_k}(y)=\\
=&f(x)*\mu_{G_k},
\end{align*}
where $\mu_{G_k}$ is the restriction of the probability measure $\mu$ on $G_k$. We have:
\begin{align*}
\mu_{G_k}=&\sum_{i=0}^{\infty} c_i m_{i, G_k} = \sum_{i=0}^{k-1} c_i m_i + \sum_{i=k}^{\infty} c_i \frac{|G_k|}{|G_i|}m_k=\\
=& \sum_{i=0}^{k-1}c_im_i + \left(\sum_{i=k}^{\infty}c_i \frac{|G_k|}{|G_i|} \right)m_k.
\end{align*}
In particular, since $|G_k|/|G_i|\leq 1/2$, for $i>k$,
\begin{align*}
\mu_{G_k}(1)= \sum_{i=0}^{k-1} c_i + \left(\sum_{i=k}^{\infty}c_i
\frac{|G_k|}{|G_i|} \right) \leq  \sum_{i=0}^{k}c_i +\frac{1}{2}
\sum_{i=k+1}^{\infty} c_i <1.
\end{align*}
Since for any finite group (more generally, any amenable group) the
norm of the convolution operator equals the variation of the
corresponding measure, we have
\begin{align*}
\parallel P_{G_k}\parallel =\mu_{G_k}(1).
\end{align*}
It follows that
\begin{align*}
\lambda_1(G_k) = 1- \parallel P_{G_k} \parallel =1-\sum_{i \leq k
}c_i - \sum_{i>k} c_i \frac{|G_k|}{|G_i|}=\mathcal{T} (|G_k|).
\end{align*}
We also have
\begin{align*}
\lambda_1(G_k)=\sum_{i>k} \left( 1 - \frac{|G_k|}{|G_i|} \right) c_i.
\end{align*}
Since $|G_k|/ |G_i|\leq 1/2$ for all $i>k$, we obtain
\begin{align*}
\frac{1}{2} \sum_{i>k} c_i < \lambda_1 (G_k)< \sum_{i>k} c_k.
\end{align*}
The proof is finished.
\end{proof}

According to Theorem \ref{thm5.1} the function $u \to \mathcal{T}(u)$ is a lower bound for the
function $u \to~\Lambda(u)$. Following ideas of F{\o}lner (see e.g. \cite{F}, \cite{Ers}, \cite{PitSal})
we will give an upper bound for the function $u\to
\Lambda(u)$. Define
\begin{align}\label{6.3}
k(n):=\min \{ k: \, \lambda_1(G_k)\leq 1/n^2\}
\end{align}
and let $v\to F(v)$ be the continuous piecewise linear function such
that $F(n)=|G_{k(n)}|$. Observe that $v\to F(v)$ is a strictly
increasing continuous function. Hence the inverse $v \to
F^{-1}(v)$ exists in the usual sense. We define the function $\Lambda_F: (1, +\infty)\to \mathbb{R}_+^{1}$ as follows:
 \begin{align}\label{6.4}
\Lambda_F(v):=\left( F^{-1}(v)-1 \right)^{-2}.
\end{align}
\begin{pro}\label{proposit6.4}
The following inequality holds true:
 \begin{align*}
\Lambda(v) \leq \Lambda_F(v), \qquad \textrm{for all} \quad v>1.
\end{align*}
\end{pro}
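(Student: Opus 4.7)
The plan is to exhibit, for each $v>1$ in the effective range of $F$, a convenient admissible test set for the infimum defining $\Lambda(v)$. The natural candidates are the subgroups $G_{k(n)}$ themselves, whose bottom eigenvalue $\lambda_1(G_{k(n)})$ is already controlled by $1/n^2$ by the very definition of $k(n)$ in \eqref{6.3}.

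Given $v>1$, I would let $n$ be the unique integer with $F(n)\leq v<F(n+1)$, which exists because $F$ is strictly increasing, continuous and piecewise linear with nodes at the positive integers; this choice gives $n\leq F^{-1}(v)<n+1$. Since $|G_{k(n)}|=F(n)\leq v$, the subgroup $G_{k(n)}$ is admissible in the infimum defining $\Lambda(v)$, whence
$$
\Lambda(v)\;\leq\;\lambda_1(G_{k(n)})\;\leq\;\frac{1}{n^2},
$$
the second inequality being exactly the defining property of $k(n)$.

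To conclude, I would convert the bound $1/n^2$ into the claimed $\Lambda_F(v)$. From $F^{-1}(v)<n+1$ we have $F^{-1}(v)-1<n$, and, assuming $F^{-1}(v)-1>0$ on the range where $\Lambda_F$ is defined, taking reciprocal squares gives
$$
\frac{1}{n^2}\;\leq\;\bigl(F^{-1}(v)-1\bigr)^{-2}\;=\;\Lambda_F(v).
$$
Combined with the previous display this yields $\Lambda(v)\leq\Lambda_F(v)$.

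The only delicate point, more a matter of bookkeeping than a genuine obstacle, is the boundary behaviour: one must check that $k(n)$ is well defined (which follows from Proposition~\ref{proposit6.3}, since $\lambda_1(G_k)<\sigma(k)\to 0$) and that $F^{-1}$ is available with $F^{-1}(v)>1$ on the stated range, which is the implicit content of declaring $\Lambda_F$ on $(1,\infty)$. Given these, the proof reduces to the test-function bound $\Lambda(v)\leq\lambda_1(G_{k(n)})$ combined with the definition of $F$.
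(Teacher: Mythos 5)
Your proof is correct and follows essentially the same route as the paper: the paper also takes $n=[F^{-1}(v)]$, uses $\omega_v=G_{k(n)}$ as the test set with $|\omega_v|=F(n)\leq v$, and bounds $\lambda_1(G_{k(n)})\leq 1/n^2\leq (F^{-1}(v)-1)^{-2}=\Lambda_F(v)$. The boundary bookkeeping you flag is likewise left implicit in the paper.
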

\begin{proof}
Let $\Omega_n:=G_{k(n)}$ and $\omega_v:=\Omega_{[F^{-1}(v)]}$. We have
\begin{align*}
|\omega_v|=|\Omega_{[F^{-1}(v)]}|=F\left([F^{-1}(v)]\right) \leq v.
\end{align*}
Using the definition of $k(n)$ we obtain:
\begin{align*}
\lambda_1(\omega_v)=\lambda_1 \left( \Omega_{[F^{-1}(v)]}\right)\leq (F^{-1}(v)-1)^{-2}=\Lambda_F(v).
\end{align*}
We conclude that
\begin{align*}
\Lambda(v)= \inf \{\lambda_1(U): \, |U|\leq v \}\leq
\lambda_1(\omega_v)=\Lambda_F(v).
\end{align*}
The proof is finished.
\end{proof}

The following regularity condition will play a crucial role in our further considerations.

\textbf{(A)} There exists $\lambda>0$ such that
\begin{align*}
c_k \leq \lambda \sigma(k), \quad \textrm{for all } \quad k \in \mathbb{N},
\end{align*}
equivalently,
\begin{align*}
\sigma(k-1) \leq (1+\lambda) \sigma(k), \quad \textrm{for all } \quad k \in \mathbb{N}.
\end{align*}

\begin{pro}\label{pro4}
Assume that Condition (A) holds. Let $\sigma: \mathbb{R}_{+} \to \mathbb{R}_{+} $ be any continuous decreasing extension of the function $\sigma: \mathbb{Z}_{+} \to \mathbb{R}_{+} $. Then the function $x\to
\sigma \circ \log x$ is doubling, that is, there
exists a constant $c>0$ such that for all $ x>1$,
\begin{align*}
c\left(\sigma\circ \log \right)(x) \leq \left(\sigma \circ \log \right)(2x) < 
\left(\sigma\circ \log\right)(x).
\end{align*}
\end{pro}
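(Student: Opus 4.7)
\medskip

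\textbf{Proof plan.} The upper bound is immediate from monotonicity: $\log$ is strictly increasing on $(1,\infty)$, and the continuous extension $\sigma$ is strictly decreasing by hypothesis, so $(\sigma\circ\log)(2x)<(\sigma\circ\log)(x)$ for every $x>1$.

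For the nontrivial lower bound I would introduce the substitution $y=\log x$, so that the claim reduces to producing $c>0$ with
\[
\sigma(y+\log 2)\geq c\,\sigma(y)\qquad\text{for all }y>0.
\]
Condition~(A) in its equivalent form $\sigma(k-1)\leq(1+\lambda)\sigma(k)$ is exactly the kind of doubling-type bound desired, but so far only at integer arguments. Iterating it twice yields $\sigma(n)\leq(1+\lambda)^{2}\sigma(n+2)$ for every integer $n\geq 0$.

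To transfer this from the integers to the continuous extension I would sandwich $y$ between consecutive integers $n=\lfloor y\rfloor\leq y<n+1$ and exploit only the fact that $\sigma$ is decreasing (which is all that is known about the extension). Decreasingness gives $\sigma(y)\leq\sigma(n)$, and the numerical observation $\log 2<1$ forces $y+\log 2<n+2$, whence $\sigma(y+\log 2)\geq\sigma(n+2)$. Combining and inserting the iterated Condition~(A) yields
\[
\sigma(y+\log 2)\;\geq\;\sigma(n+2)\;\geq\;(1+\lambda)^{-2}\sigma(n)\;\geq\;(1+\lambda)^{-2}\sigma(y),
\]
so $c=(1+\lambda)^{-2}$ works uniformly in $y>0$, i.e.\ in $x>1$.

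I do not expect any real obstacle. The only delicate point is that the continuous extension $\sigma$ is otherwise arbitrary, so the estimate must be built exclusively from monotonicity and from the prescribed integer values; the key is that the real shift by $\log 2$ is strictly smaller than one integer step, leaving at most two applications of Condition~(A) to absorb the discrepancy between the bracketing integers $n$ and $n+2$. A different choice of logarithm base would only change $2$ into the appropriate integer ceiling and adjust the exponent on $(1+\lambda)^{-1}$ in $c$ accordingly.
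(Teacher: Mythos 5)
Your argument is correct and follows essentially the same route as the paper: bracket $\log x$ between consecutive integers, use only the monotonicity of the continuous extension, and absorb the shift by $\log 2<1$ via iterated applications of Condition (A) in the form $\sigma(k-1)\leq(1+\lambda)\sigma(k)$. The only difference is cosmetic: the paper passes to $\sigma(k+3)$ and obtains the constant $(1+\lambda)^{-3}$, whereas your tighter bracketing $y+\log 2<n+2$ yields $(1+\lambda)^{-2}$.
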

\begin{proof}
We have
\begin{align*}
\sigma(k+1)=\sigma(k)-c_{k+1} \geq \sigma(k)- \lambda \sigma(k+1),
\end{align*}
hence
\begin{align*}
\sigma(k+1) \geq \frac{1}{1+\lambda}\sigma(k).
\end{align*}
It follows that for  $k \leq \log x < k+1$,
\begin{align*}
\left(\sigma \circ  \log\right)(2x)=& \sigma\left(\log x +\log 2\right)\geq
\sigma(\log x +2) > \sigma(k+3)  \geq\\
\geq &  \frac{1}{(1+\lambda)^3}\sigma(k) \geq
\frac{1}{(1+\lambda)^3} \left(\sigma\circ \log \right)(x).
\end{align*}
The proof is finished.
\end{proof}
\begin{exa}\label{ex0} We present here  few examples illustrating Condition (A) introduced above.\\
1) Let $c_k\asymp q^{k^p}$, $0<q<1$, at $\infty$. \textit{Let us show that Condition (A) holds true if
and only if $0<p\leq 1$.} Indeed, assuming for simplicity of
notation that $q=e^{-1}$, we will have 
\begin{align*}
 c_{k} \asymp  \exp(-(k)^p) \qquad  \textrm{at} \quad \infty.
\end{align*}
Let $p\leq 1$, then by our assumption, for some $b_1>0$,
\begin{align*}
c_k/\sigma(k) \leq c_k/c_{k+1} \leq b_1 \exp((k+1)^p-k^p).
\end{align*}
Since
\begin{align*}
(k+1)^p-k^p=p \int_k^{k+1} \tau^{p-1} d\tau\leq p,
\end{align*}
we conclude that $c_k/\sigma(k) \leq \lambda$ for some $\lambda>0$ and  for  all $ k\in \mathbb{N}$.\\
Let $p>1$, then
\begin{align*}
\sigma(k)=c_{k+1} \left(1+\sum_{l\geq 1}c_{k+1+l}/c_{k+1}\right).
\end{align*}
By our assumption, for some $b_2 >0$,
\begin{align*}
c_{k+1+l}/c_{k+1}\leq b_2 \exp\{-[(k+1+l)^p-(k+1)^p]\}.
\end{align*}
Since $p>1$, we obtain
\begin{align*}
(k+1+l)^p-(k+1)^p=p\int_{k+1}^{k+1+l} \tau^{p-1} d\tau > pl.
\end{align*}
It follows that $\sigma(k) \asymp c_{k+1}$. Hence  $c_k/\sigma(k) \asymp c_k/c_{k+1} \to \infty$ at $\infty$.\\
2) Let $c_k \asymp k^{-p}$, $p>1$. Then $\sigma(k)\asymp k^{-p+1}$, therefore  $c_k/\sigma(k) \to 0$ at~$\infty$. Thus, Condition (A) is satisfied.\\
3) Let $c_k \asymp 1/(k\cdot\log k \cdot \log_{(2)}k \cdot \ldots \cdot (\log_{(n)}k)^p)$, $p>1$. Then 
\begin{align*}
\sigma(k) \asymp (\log_{(n)} k)^{-p+1}, 
\end{align*}
therefore $c_k/\sigma(k) \to 0$ at $\infty$. Condition (A) is satisfied.
\end{exa}
\begin{pro}\label{pro1}
Assume that Condition (A) holds. Then, there exists $c>0$ such that
\begin{align*}
\Lambda_F(v) \leq  c\mathcal{T}(v), \qquad \textrm{for any} \quad v > 1.
\end{align*}
\end{pro}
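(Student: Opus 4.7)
The plan is to compare $\Lambda_F(v)$ with $\mathcal T(v)$ through the intermediate quantity $\lambda_1(G_k)$, where $k=k(v)$ is the unique integer with $|G_k|\leq v<|G_{k+1}|$. Condition (A) is what makes the two relations $\mathcal T(v)\asymp\sigma(k)$ and $\lambda_1(G_k)\asymp\sigma(k)$ coexist.

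First I would bound $\mathcal T(v)$ from below in terms of $\sigma(k)$. From Proposition \ref{pro2}(4), $\mathcal T(v)>\tfrac12\sigma(k+1)$. The second formulation of Condition (A) says $\sigma(k)\leq(1+\lambda)\sigma(k+1)$, and combining this with Proposition \ref{proposit6.3}'s inequality $\lambda_1(G_k)<\sigma(k)$ yields
\[
\lambda_1(G_k)<\sigma(k)\leq(1+\lambda)\sigma(k+1)<2(1+\lambda)\,\mathcal T(v).
\]
Here I am using Condition (A) in an essential way; without it $\sigma(k+1)$ could be arbitrarily smaller than $\sigma(k)$ and this comparison would fail.

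Second I would bound $F^{-1}(v)$ from below. The map $j\mapsto\lambda_1(G_j)=\mathcal T(|G_j|)$ is strictly decreasing by Proposition \ref{pro2}, so the inclusion $k(n)\leq k$ is equivalent to the inequality $\lambda_1(G_k)\leq 1/n^2$. Setting
\[
n:=\bigl\lfloor 1/\sqrt{\lambda_1(G_k)}\bigr\rfloor,
\]
this equivalence gives $k(n)\leq k$, hence $F(n)=|G_{k(n)}|\leq|G_k|\leq v$, and the monotonicity of $F$ yields $F^{-1}(v)\geq n$. Provided $k$ is large enough that $\lambda_1(G_k)\leq 1/16$ (which holds for all $k\geq k_0$, where $k_0$ is chosen once and for all with $\sigma(k_0)<1/16$), one has $n-1\geq 1/\bigl(2\sqrt{\lambda_1(G_k)}\bigr)$, so
\[
\Lambda_F(v)=(F^{-1}(v)-1)^{-2}\leq(n-1)^{-2}\leq 4\,\lambda_1(G_k)\leq 8(1+\lambda)\,\mathcal T(v).
\]

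Finally, for the remaining range $k<k_0$, the variable $v$ lies in the bounded interval $(1,|G_{k_0}|)$, on which $\mathcal T$ is continuous and strictly positive (it decreases to $0$ only at infinity) and $F^{-1}(v)$ stays in a compact subset where $\Lambda_F$ is bounded; thus the inequality is enforced on this leftover range simply by enlarging $c$. The main obstacle I anticipate is precisely the careful use of Condition (A) to propagate geometric comparability from $\sigma(k)$ to $\sigma(k+1)$, together with the minor bookkeeping needed to replace $n$ by $n-1$ in the definition of $\Lambda_F$; everything else is a direct chain of estimates from Propositions \ref{pro2}--\ref{proposit6.3}.
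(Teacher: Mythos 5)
Your argument is correct and is essentially the paper's own proof run in the dual parametrization: both establish the same chain $\Lambda_F(v)\lesssim\lambda_1(G_j)\lesssim\sigma(j)\lesssim\mathcal T(v)$ using Proposition \ref{pro2}(4), Proposition \ref{proposit6.3}, the minimality in the definition of $k(n)$, and Condition (A) to pass between consecutive values of $\sigma$; you index by the scale $k$ with $|G_k|\le v<|G_{k+1}|$ and choose $n\approx\lambda_1(G_k)^{-1/2}$, whereas the paper indexes by $n$ with $n\le v\le n+1$ and evaluates $\mathcal T$ at $F(v)$. The only caveat --- which your proof shares with the paper's own step $c_2\,(F^{-1}(u))^{-2}\ge c_3\,(F^{-1}(u)-1)^{-2}$ --- concerns $v$ with $F^{-1}(v)$ close to $1$, where $\Lambda_F(v)=(F^{-1}(v)-1)^{-2}$ blows up while $\mathcal T(v)$ stays bounded, so your remark that $\Lambda_F$ is bounded on the leftover compact range is not quite accurate and the inequality should really be read for $v$ bounded away from that region (which is all that Theorem \ref{thm2} uses).
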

\begin{proof}
For any $n\geq 1$ and $n\leq v \leq n+1$, we have
\begin{align*}
\mathcal{T}(F(v)) \geq
\mathcal{T}(F(n+1))=\mathcal{T}(|{G}_{k(n+1)}|)=
\lambda_1({G}_{k(n+1)}) \geq \frac{1}{2} \sigma(k(n+1)).
\end{align*}
Thanks to our assumption, for some $c_1>0$,
\begin{align*}
\sigma(k(n+1)) \geq c_1 \sigma(k(n+1)-1),
\end{align*}
hence for some $c_2>0$,
\begin{align*}
 \mathcal{T}(F(v)) & \geq \frac{c_1}{2} \sigma\left(k(n+1)-1\right) \geq
\frac{c_1}{2}\lambda_1\left({G}_{k(n+1)-1}\right) \geq\\
&\geq \frac{c_1}{2} \frac{1}{(n+1)^2} \geq \frac{c_1}{4}
\frac{1}{v^2}=\frac{c_2}{v^2}.
\end{align*}
Since $u\to F(u)$ is strictly increasing and since $F(\infty)=\infty$, there exists $c_3>0$
such that,
\begin{align*}
\mathcal{T}(u) \geq
\frac{c_2}{\left(F^{-1}(u)\right)^2}\geq
\frac{c_3}{\left(F^{-1}(u)-1\right)^2}=c_3 \Lambda_F(u).
\end{align*}
The proof is finished.
\end{proof}
\begin{thm}\label{thm2}
Assume that Condition (A) holds. Then, there exists $c>0$ such that
\begin{align*}
\mathcal{T}(u)\leq \Lambda(u) \leq c \mathcal{T}(u),
\qquad \textrm{for any} \quad u > 1,
\end{align*}
that is, $\Lambda \asymp \mathcal{T}$ at $\infty$.
\end{thm}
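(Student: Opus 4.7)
The plan is to recognize that Theorem \ref{thm2} is essentially a packaging result that combines the lower bound already established in Theorem \ref{thm5.1} with the upper bound assembled from Proposition \ref{proposit6.4} and Proposition \ref{pro1}. Condition (A) enters only through the upper bound; the lower bound $\mathcal{T}(u) \leq \Lambda(u)$ holds unconditionally for $u \geq 1$.

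First, for the lower inequality I would simply invoke Theorem \ref{thm5.1}, which gives $\Lambda(u) \geq \mathcal{T}(u)$ for every $u \geq 1$ with no regularity hypothesis on the coefficients $c_k$. No rewriting is necessary because the statement is already in the form we need.

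For the upper inequality I would chain two earlier results. Proposition \ref{proposit6.4} tells us $\Lambda(v) \leq \Lambda_F(v)$ for all $v > 1$, where $\Lambda_F$ is built from the auxiliary function $F$ defined via the integers $k(n)$ in \eqref{6.3}. Under Condition (A), Proposition \ref{pro1} then yields a constant $c > 0$ such that $\Lambda_F(v) \leq c\, \mathcal{T}(v)$ for $v > 1$. Composing these two inequalities gives the upper bound $\Lambda(u) \leq c\, \mathcal{T}(u)$ claimed in the theorem.

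There is really no main obstacle here: all the substantive work — the operator norm estimate via coset counting that produces $\mathcal{T}(|U|)$ as a lower bound for $\lambda_1(U)$, the construction of the Folner-type test sets $\Omega_n = G_{k(n)}$ giving the upper bound $\Lambda_F$, and the comparison $\Lambda_F \leq c \mathcal{T}$ using the doubling-type consequence of Condition (A) — has already been carried out in Theorem \ref{thm5.1}, Proposition \ref{proposit6.4} and Proposition \ref{pro1}. Thus the proof reduces to a two-line concatenation: for $u > 1$,
\begin{equation*}
\mathcal{T}(u) \leq \Lambda(u) \leq \Lambda_F(u) \leq c\, \mathcal{T}(u),
\end{equation*}
which is precisely the claim $\Lambda \asymp \mathcal{T}$ at infinity.
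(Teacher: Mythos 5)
Your proof is correct and is essentially identical to the paper's: the authors likewise combine the unconditional chain $\mathcal{T}(u)\leq\Lambda(u)\leq\Lambda_F(u)$ from Theorem \ref{thm5.1} and Proposition \ref{proposit6.4} with the bound $\Lambda_F(u)\leq c\,\mathcal{T}(u)$ from Proposition \ref{pro1} under Condition (A). Nothing is missing.
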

\begin{proof}
We always have
\begin{align*}
\Lambda_F(u) \geq \Lambda(u) \geq \mathcal{T}(u), \qquad \textrm{for any} \quad
u > 1.
\end{align*}
Assuming that Condition (A) holds, we apply Proposition \ref{pro1} and obtain:
\begin{align*}
\Lambda_F(u)\leq c\mathcal{T}(u), \qquad \textrm{for any} \quad u > 1.
\end{align*}
The proof is finished.
\end{proof}
\begin{pro}\label{pro3}
Let $\phi, \upsilon: \mathbb{R}_{+} \to \mathbb{R}_{+}$ be two
continuous strictly monotone functions such that $\phi(k) \asymp
\sigma(k)$ and $\upsilon(k)\asymp |G_k|$ at $\infty$.
Then, under Condition (A):
\begin{align*}
\Lambda \stackrel{d}{\asymp} \phi \circ \upsilon^{-1} \qquad \textrm{ and } \qquad \mathcal{T} \stackrel{d}{\asymp} \phi \circ \upsilon^{-1} \qquad \textrm{at} \quad \infty.
\end{align*}
\end{pro}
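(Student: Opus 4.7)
The plan is to reduce the statement to a window-by-window comparison and then run it on each interval $[|G_k|,|G_{k+1}|)$. Theorem \ref{thm2} under Condition (A) gives $\Lambda \asymp \mathcal{T}$ at infinity, so it suffices to prove $\mathcal{T}\stackrel{d}{\asymp}\phi\circ\upsilon^{-1}$ at infinity; the corresponding statement for $\Lambda$ then follows automatically. Since $\sigma(k)\downarrow 0$ and $|G_k|\uparrow\infty$, the hypotheses force $\phi$ to be continuous strictly decreasing and $\upsilon$ continuous strictly increasing, so $\phi\circ\upsilon^{-1}$ is continuous strictly decreasing as well.

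For $u\in[|G_k|,|G_{k+1}|)$, Proposition \ref{pro2}(4) yields $\tfrac{1}{2}\sigma(k+1)<\mathcal{T}(u)<\sigma(k)$, and Condition (A) in the form $\sigma(k)\le(1+\lambda)\sigma(k+1)$ closes the gap, so $\mathcal{T}(u)\asymp\sigma(k)\asymp\sigma(k+1)$ with constants independent of $k$. Fixing $\alpha_i,\beta_i>0$ ($i=1,2$) realising $\phi(k)\asymp\sigma(k)$ and $\upsilon(k)\asymp|G_k|$, the monotonicity of $\upsilon^{-1}$ gives
\[
\upsilon^{-1}(|G_k|/\beta_2)\ \le\ k\ \le\ \upsilon^{-1}(|G_k|/\beta_1),
\]
and applying the decreasing function $\phi$ I obtain
\[
\phi\circ\upsilon^{-1}(|G_k|/\beta_1)\ \le\ \phi(k)\ \le\ \phi\circ\upsilon^{-1}(|G_k|/\beta_2).
\]

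To combine these two ingredients, I invoke the monotonicity of $\phi\circ\upsilon^{-1}$ on the window $[|G_k|,|G_{k+1}|)$. For the lower estimate, $u\ge|G_k|$ together with the left-hand sandwich gives $\phi\circ\upsilon^{-1}(u/\beta_1)\le\phi\circ\upsilon^{-1}(|G_k|/\beta_1)\le\phi(k)$, whence $\mathcal{T}(u)\ge c_1\,\phi\circ\upsilon^{-1}(u/\beta_1)$ for an absolute $c_1>0$. For the upper estimate, applying the sandwich to $k+1$ and using $u<|G_{k+1}|$ gives $\phi(k+1)\le\phi\circ\upsilon^{-1}(|G_{k+1}|/\beta_2)<\phi\circ\upsilon^{-1}(u/\beta_2)$; since Condition (A) forces $\phi(k)\asymp\phi(k+1)$ (via $\sigma(k)\asymp\sigma(k+1)$ and $\phi\asymp\sigma$), this yields $\mathcal{T}(u)\le c_2\,\phi\circ\upsilon^{-1}(u/\beta_2)$ for an absolute $c_2>0$. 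These are precisely the two dilatational inequalities defining $\mathcal{T}\stackrel{d}{\asymp}\phi\circ\upsilon^{-1}$, with dilation factors $1/\beta_1$ and $1/\beta_2$.

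The main obstacle is that the ratio $|G_{k+1}|/|G_k|$ is only known to be at least $2$ and can otherwise be arbitrarily large, so one cannot replace $u\in[|G_k|,|G_{k+1}|)$ by either endpoint up to a universal multiplicative factor. Condition (A) is the decisive device: it controls the jump of $\sigma$ (and therefore of $\phi$ at consecutive integers) by a universal factor, which is exactly what enables the passage from pointwise comparisons at $u=|G_k|$ to uniform estimates throughout the window, and hence to the dilatational equivalence at infinity.
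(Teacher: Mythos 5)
Your proposal is correct and follows essentially the same route as the paper's proof: reduce to $\mathcal{T}$ via Theorem \ref{thm2}, use Proposition \ref{pro2}(4) on each window $[|G_k|,|G_{k+1}|)$, bridge $\sigma(k)$ and $\sigma(k+1)$ with Condition (A), and convert $k\asymp\upsilon^{-1}(|G_k|)$ into the dilated argument $u/\beta_i$ using the monotonicity of $\phi\circ\upsilon^{-1}$. The only cosmetic difference is that you anchor the comparison at the endpoints $|G_k|$, $|G_{k+1}|$ before extending over the window, whereas the paper writes the inequalities $k\le\upsilon^{-1}(u/b_1)$ and $k+1>\upsilon^{-1}(u/b_2)$ directly in terms of $u$; the constants and the logic are the same.
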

\begin{proof}
Since $\Lambda \asymp \mathcal{T}$ we work with
$\mathcal{T}$. According to Proposition \ref{pro2}(4):
\begin{align*}
\frac{1}{2} \sigma(k+1) < \mathcal{T}(u) < \sigma(k), \qquad \textrm{for} \qquad
|G_k|\leq u< |G_{k+1}|.
\end{align*}
Hence for $u,k$ as above and for some $b_1,b_2>0$,
\begin{align*}
k \leq \upsilon^{-1}\left(\frac{u}{b_1}\right), \qquad k+1>\upsilon^{-1}\left(\frac{u}{b_2}\right).
\end{align*}
It follows that for some $b_3, b_4>0$
\begin{align*}
\mathcal{T}(u)<& \sigma(k)\leq (1+\lambda)\sigma(k+1)\leq b_3 (1+\lambda)\phi(k+1)\leq\\
\leq & b_3 (1+\lambda) \phi\circ\upsilon^{-1}\left(\frac{u}{b_2}\right),
\end{align*}
and
\begin{align*}
\mathcal{T}(u) >& \frac{1}{2} \sigma(k+1) \geq
\frac{1}{2(1+\lambda)}\sigma(k) \geq
\frac{b_4}{2(1+\lambda)}\phi(k)\geq \\
\geq& \frac{b_4}{ 2(1+\lambda)}
\phi\circ\upsilon^{-1}\left(\frac{u}{b_1}\right).
\end{align*}
Notice that the constants $b_1, b_2, b_3, b_4>0$ come from the relations
$\phi(k) \asymp~\sigma(k)$ and $\upsilon(k) \asymp |G_k|$. The proof is
finished.
\end{proof}
\begin{rem}\label{rem1}
If $\upsilon(k) =|G_k|$, we obtain the more precise
relations:
\begin{align}
\Lambda \asymp  \phi \circ \upsilon^{-1} \qquad \textrm{ and } \qquad \mathcal{T} \asymp \phi \circ \upsilon^{-1} \qquad \textrm{at} \quad \infty.
\end{align}
\end{rem}
\begin{exa}\label{ex1}$ $\\
1) Let $G=\bigcup_{n=0}^{\infty}G_n$, $G_n=Z(2)^n$. Then
$|G_n|=2^n$ and we can choose $\upsilon(x)=2^x$, $x\geq 0$.
Proposition \ref{pro3} and Remark \ref{rem1} yield the following result: Under
Condition (A), for any $\phi \asymp \sigma$ at $\infty$
\begin{align*}
\Lambda(\tau) \asymp \phi \circ \log (\tau^{\gamma}) \qquad \textrm{at} \quad \infty,
\qquad \textrm{where} \quad \gamma=1/\log 2.
\end{align*}
In particular, we have:\\
(1.1) If $c_k \asymp q^k$, $0<q<1$, then $\sigma(k) \asymp
q^k$, $ k \in \mathbb{Z}_{+}$. Hence,
\begin{align*}
\Lambda(\tau) \asymp \tau^{-A\gamma}  \qquad \textrm{at} \quad \infty, \quad \textrm{where} \quad A=\log \frac{1}{q}. 
\end{align*}
(1.2) If $c_k \asymp k^{-p}$, $p>1$, then $\sigma(k) \asymp
k^{-p+1}$, $ k \in \mathbb{Z}_{+}$. Hence,
\begin{align*}
\Lambda(\tau) \asymp (\log \tau)^{1-p} \quad \textrm{at} \quad \infty.
\end{align*}
(1.3) If  $c_k \asymp 1/(k\cdot \log k \cdot \log_{(2)}k \cdot \ldots \cdot (\log_{(n)}k)^p)$, $p>1$, then 
\begin{align*}
\sigma(k) \asymp \left( \log_{(n)}k\right)^{1-p}, \quad  k \in \mathbb{Z}_{+}.
\end{align*}
Hence, 
\begin{align*}
\Lambda(\tau) \asymp \left(\log_{(n+1)}
\tau\right)^{1-p} \quad \textrm{at} \quad \infty.
\end{align*}
2) Let $G=\bigcup_{n=0}^{\infty}G_n$, $G_n=S_n$, be the infinite
symmetric group, i.e. the group of all finite permutations of the
set $\mathbb{N}=\{1,2,\ldots\}$. We have $|G_n|=n!$. Hence we can
choose
$\upsilon (x) =\Gamma(x+1)$, where $\Gamma(x)$ is the gamma function
\begin{align}
\Gamma(x)=\int_0^{\infty} t^{x-1}e^{-t}dt.
\end{align}
By Stirling's formula \cite[(9.15)]{Fel}:
\begin{align}
\Gamma(1+x)=\sqrt{2 \pi x}\left(\frac{x}{e}\right)^x \left( 1+O\left(\frac{1}{x}\right)\right).
\end{align}
Let $\phi: \mathbb{R}_{+} \to \mathbb{R}_{+}$ be any continuous decreasing function such that $\phi(k)\asymp \sigma(k)$ at $\infty$.
Proposition \ref{pro3} and Remark \ref{rem1} show that under
Condition (A),
\begin{align}\label{equat6.8}
\Lambda \asymp \phi \circ \Gamma^{-1} \quad \textrm{at} \quad \infty.
\end{align}
\end{exa}
The upper order $\rho^{*}(f)$ and lower order $\rho_{*}(f)$ of a positive function $f$ are defined as
\begin{align*}
\rho^{*}(f):=\limsup_{x\to \infty} \frac{\log f(x)}{\log x}, \qquad \rho_{*}(f):=\liminf_{x\to \infty} \frac{\log f(x)}{\log x}.
\end{align*}
If $\rho^{*}(f)=\rho_{*}(f)< \infty$, we say that $f$ is of finite order $\rho(f):=\rho^{*}(f)$. 
\begin{pro}\label{pro5.12} Let $G=S_{\infty}$ be the infinite symmetric group.
$1.$~Under Condition (A), $\rho(\Lambda)=0$
(compare with Example \ref{ex1}(1.1)).
$2.$ Assume that $\sigma(k)\asymp|G_k|^{-\gamma}$ for some $\gamma>0$ (Condition (A) does not hold!). Then $\rho(\Lambda)=-\gamma$. In particular, for any $\epsilon>0$ there exist $c_1,c_2>0$ such that
\begin{align}
c_1u^{-(\gamma+\epsilon)}\leq \Lambda(u)\leq c_2u^{-(\gamma-\epsilon)} \qquad \textrm{at } \quad \infty.
\end{align}
\end{pro}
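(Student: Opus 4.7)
The plan is to bracket $\Lambda$ between the lower bound $\mathcal{T}(u)\leq \Lambda(u)$ (Theorem \ref{thm5.1}) and the upper bound $\Lambda(u)\leq \Lambda_F(u)$ (Proposition \ref{proposit6.4}), and to exploit the Stirling asymptotic $\Gamma^{-1}(x)\sim \log x/\log\log x$ coming from $|G_n|=n!$.

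For part 1, under Condition (A) the equivalence $\Lambda \asymp \phi\circ \Gamma^{-1}$ from Example \ref{ex1}(2) is available, where $\phi$ is any continuous decreasing extension of $\sigma$. Iterating Condition (A) in the form $\sigma(k-1)\leq(1+\lambda)\sigma(k)$ yields the geometric lower bound $\sigma(k)\geq \sigma(0)(1+\lambda)^{-k}$. Substituting and dividing by $\log x$,
\begin{align*}
\frac{\log\Lambda(x)}{\log x}\;\geq\; -\frac{\log(1+\lambda)\cdot \Gamma^{-1}(x)}{\log x} \;\sim\; -\frac{\log(1+\lambda)}{\log\log x}\;\longrightarrow\;0.
\end{align*}
Combined with the trivial bound $\limsup \log \Lambda(x)/\log x\leq 0$ (coming from $\Lambda\asymp \mathcal{T}\to 0$, so $\log\Lambda(x)\leq 0$ eventually), this gives $\rho(\Lambda)=0$.

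For part 2, Condition (A) fails, so I use only $\mathcal{T}\leq \Lambda\leq \Lambda_F$. For the lower bound, pick $k$ with $|G_k|\leq u<|G_{k+1}|$; Proposition \ref{pro2}(4) yields $\mathcal{T}(u)\geq \tfrac{1}{2}\sigma(k+1)\asymp ((k+1)!)^{-\gamma}$. Writing $(k+1)!=(k+1)\,k!\leq (k+1)\,u$ together with $k+1\lesssim \log u$ (Stirling), one concludes $\Lambda(u)\geq C\,u^{-\gamma}(\log u)^{-\gamma}\geq c_1 u^{-(\gamma+\epsilon)}$ for any $\epsilon>0$. For the upper bound, Proposition \ref{proposit6.3} gives $\lambda_1(G_k)\asymp (k!)^{-\gamma}$, so by (\ref{6.3}) $k(n)$ is the smallest $k$ with $k!\gtrsim n^{2/\gamma}$; the superexponential growth of factorials then produces
\begin{align*}
n^{2/\gamma}\;\lesssim\; F(n)=k(n)!\;\leq\; k(n)\cdot(k(n)-1)!\;\lesssim\; k(n)\cdot n^{2/\gamma},
\end{align*}
with $k(n)\asymp \log n/\log\log n$. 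Therefore $\log F^{-1}(v)\sim (\gamma/2)\log v$, so for any $\epsilon>0$ one has $F^{-1}(v)\geq v^{\gamma/2-\epsilon/2}$ eventually, and hence $\Lambda_F(v)\leq c_2 v^{-(\gamma-\epsilon)}$. Letting $\epsilon\to 0$ in both directions proves $\rho(\Lambda)=-\gamma$.

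The most delicate point will be the bookkeeping around $F$ in part 2: one must verify that the polylogarithmic factor $k(n)\asymp \log n/\log\log n$ in $F(n)\asymp k(n)\cdot n^{2/\gamma}$ contributes only an $\epsilon$-slack after inversion, so that $F^{-1}(v)=v^{\gamma/2+o(1)}$ rather than being deformed out of the desired power range. By contrast, part 1 reduces, after unwinding Example \ref{ex1}(2), to writing down the geometric lower bound on $\sigma$ forced by Condition (A) and applying Stirling to $\Gamma^{-1}$.
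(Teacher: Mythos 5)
Your proposal is correct and follows essentially the same route as the paper: both parts rest on the sandwich $\mathcal{T}\leq\Lambda\leq\Lambda_F$, Stirling's asymptotic $\Gamma^{-1}(x)\sim\log x/\log\log x$, and (for part 1) the geometric lower bound $\sigma(k)\geq\sigma(0)(1+\lambda)^{-k}$ forced by Condition (A), which the paper packages as the doubling of $\sigma\circ\log$ via Proposition \ref{pro4}. Your bookkeeping for $F$ in part 2 — namely $F(n)\lesssim k(n)\,n^{2/\gamma}$ with $k(n)=O(\log n)$, so the polylog factor is absorbed into the $\epsilon$-slack — is exactly the computation carried out in the paper's proof of $\rho^{*}(F)\leq 2/\gamma$.
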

\begin{proof}[Proof of (1):]
Let $\sigma: \mathbb{R}_{+} \to \mathbb{R}_{+}$ be any continuous decreasing extension of the function  $\sigma: \mathbb{Z}_{+} \to \mathbb{R}_{+}$. By Proposition \ref{pro3} and Remark \ref{rem1},
\begin{align*}
\Lambda \asymp \sigma \circ \Gamma^{-1}= (\sigma \circ
\log)\circ (\exp \circ \Gamma^{-1})=(\sigma \circ \log)\circ (\Gamma
\circ \log)^{-1}.
\end{align*}
By Proposition \ref{pro4}, the function $\sigma \circ \log$ is
doubling. Hence for some $k>0$ and $u>>1$,
\begin{align*}
\sigma \circ \log(u) \geq u^{-k} \quad \textrm{at} \quad \infty.
\end{align*}
By Stirling's formula, for $u>>1$,
\begin{align*}
(\Gamma \circ \log(u)) \geq (\log u)^{\frac{1}{2}\log u},
\end{align*}
and
\begin{align*}
(\Gamma \circ \log)^{-1}(u) \leq \exp\left( \frac{3 \log u}{\log
\log u} \right).
\end{align*}
It follows that for $u>>1$,
\begin{align*}
0\geq \log \left( \Lambda(u) \right) \geq \frac{-3 k \log
u}{\log \log u}.
\end{align*}
Evidently this inequality implies that $\rho(\Lambda)=0$.

\textit{Proof of (2):}
By Theorem \ref{thm5.1} and Proposition \ref{proposit6.4},
\begin{align*}
\frac{1}{\Lambda_F}\leq \frac{1}{\Lambda}\leq \frac{1}{\mathcal{T}}.
\end{align*}
It follows that
\begin{align*}
\rho_{*}\left(\frac{1}{\Lambda_F}\right)\leq \rho_{*}\left(\frac{1}{\Lambda}\right)\leq \rho^{*}\left(\frac{1}{\Lambda}\right)\leq \rho^{*}\left( \frac{1}{\mathcal{T}}\right).
\end{align*}
For $|G_k|\leq u< |G_{k+1}|$, by Proposition \ref{pro2},
\begin{align*}
\frac{1}{2}\sigma(k+1)< \mathcal{T}(u)< \sigma(k).
\end{align*}
Hence for such $u$ and $k$, and for some $c_1>0$,
\begin{align*}
\frac{1}{\mathcal{T}(u)}\leq \frac{2}{\sigma(k+1)}\leq c_1 |G_{k+1}|^{\gamma}=c_1 ((k+1)!)^{\gamma},
\end{align*}
and 
\begin{align*}
\log \frac{1}{\Lambda(u)}\leq \log \frac{1}{\mathcal{T}(u)}\leq c_1+\gamma \log (k+1)! \leq c_1+\gamma \log (k+1) + \gamma \log u.
\end{align*}
By Stirling's formula $\log k! \sim k\log k$, hence for any $\epsilon>0$ there exists $u_0>1$ such that for all $u > u_0$,
\begin{align*}
\log \frac{1}{\Lambda(u)}\leq (\gamma + \epsilon) \log u.
\end{align*}
This evidently yields the inequality
\begin{align}\label{equat6.10}
\rho^{*}\left(\frac{1}{\Lambda}\right)\leq \gamma.
\end{align}
By the definition of the function $\Lambda_F$, see (\ref{6.4}),
\begin{align*}
\frac{1}{\Lambda_F(u)}=(F^{-1}(u)-1)^2.
\end{align*}
It follows that 
\begin{align*}
\rho_{*}\left( \frac{1}{\Lambda_F(u)}\right)=2 \rho_{*}(F^{-1})=\frac{2}{\rho^{*}(F)}.
\end{align*}
For $n \leq u \leq n+1$,
\begin{align*}
F(u)\leq F(n+1)=|G_{k(n+1)}|=(k(n+1))!,
\end{align*}
where by (\ref{6.3}),
\begin{align*}
k(n+1):=\min \{ k: \, \lambda_1(G_k)\leq (n+1)^{-2}\}.
\end{align*}
By Proposition \ref{proposit6.3},
\begin{align*}
\frac{1}{2}\sigma(k) < \lambda_1(G_k)< \sigma(k),
\end{align*}
hence, for some $c_2>0$,
\begin{align*}
k(n+1) &\leq \min \{ k: \, \sigma(k)\leq (n+1)^{-2}\}\leq \\
& \leq \min \{ k: \, |G_k|\geq c_2(n+1)^{2/\gamma}\}:=\bar{k}.
\end{align*}
Evidently, we have
\begin{align*}
(\bar{k}-1)!\leq c_2 (n+1)^{2/\gamma}\leq \bar{k}!=\bar{k}(\bar{k}-1)!.
\end{align*}
It follows that
\begin{align*}
F(u)\leq \bar{k}!\leq c_2 \bar{k}(n+1)^{2/\gamma}.
\end{align*}
By Stirling's formula, for some $c_3$, $c_4>0$,
\begin{align*}
 \bar{k}\leq \bar{k} \log \bar{k}\leq c_3 \log (\bar{k}-1)!\leq c_3 \log c_2 (n+1)^{2/\gamma}\leq c_4 \log (n+1).
\end{align*}
Hence for some $c_5>0$ and any $\epsilon >0$, and $u>>1$ we obtain
\begin{align*}
\log F(u) \leq c_5 +\log \log (n+1) +\frac{2}{\gamma}\log (n+1)\leq \left(\frac{2}{\gamma}+\epsilon \right) \log u.
\end{align*}
This evidently yields
\begin{align*}
\rho^{*}(F)\leq \frac{2}{\gamma}
\end{align*}
and
\begin{align}\label{equat6.11}
\rho_{*}\left(\frac{1}{\Lambda} \right)=\frac{2}{\rho^{*}(F)}\geq \gamma.
\end{align}
 The inequalities (\ref{equat6.10}) and (\ref{equat6.11}) prove the claim.
\end{proof}
Let $\phi: \mathbb{R}_{+} \to \mathbb{R}_{+}$ be any continuous decreasing function such that $\phi(k)\asymp \sigma(k)$ at $\infty$.
The following three examples illustrate Proposition~\ref{pro5.12}(1):\\
(2.1) If $c_k\asymp q^k$, $0<q<1$, then $\sigma(k) \asymp q^k$, $k \in \mathbb{Z}_{+}$.
By (\ref{equat6.8}),
\begin{align*}
\Lambda(\tau) \asymp(\phi \circ \log)\circ (\Gamma \circ
\log)^{-1}(\tau) \asymp \left( (\Gamma \circ
\log)^{-1}(\tau)\right)^{-A},
\end{align*}
where $A=\log \frac{1}{q}$.
By Stirling's formula,
\begin{align*}
\log \circ \left(\Gamma \circ \log\right)^{-1}(\tau)=\Gamma^{-1}(\tau) \sim
\frac{\log \tau}{\log \log \tau} \quad \textrm{at} \quad \infty.
\end{align*}
Hence,
\begin{align*}
\Lambda(\tau) =\exp \left( -\log \frac{1}{\Lambda(\tau)}\right),
\end{align*}
and
\begin{align*}
\log \frac{1}{\Lambda(\tau)} \sim \frac{A\log \tau}{\log \log
\tau} \quad \textrm{at} \quad \infty.
\end{align*}
(2.2) If $c_k \asymp k^{-p} $, $p>1$, then $\sigma(k) \asymp
k^{-p+1}$, $k \in \mathbb{Z}_{+}$, and
\begin{align*}
\Lambda(\tau) \asymp \left(\phi \circ \Gamma^{-1}\right)(\tau) \asymp
\left(\frac{\log \tau}{\log \log \tau}\right)^{-p+1} \quad
\textrm{at} \quad \infty.
\end{align*}
(2.3) If  $c_k \asymp 1/(k \cdot\log k \cdot \log_{(2)}k \cdot \ldots \cdot (\log_{(n)}k)^p)$, $p>1$, then 
\begin{align*}
\sigma(k) \asymp(\log_{(n)}k)^{1-p}, \qquad k \in \mathbb{Z}_{+},
\end{align*}
and
\begin{align*}
\Lambda(\tau) \asymp \left(\phi \circ \Gamma^{-1}\right)(\tau) \asymp
\left(\log_{(n+1)} \tau \right)^{1-p} \quad \textrm{at} \quad \infty.
\end{align*}
\begin{rem}
Observe that in all examples (2.1)-(2.3), the function $\Lambda$ is comparable to a slowly varying function $\alpha$. For example, in (2.1) one can take $\alpha(\tau)=((\Gamma \circ \log)^{-1}(\tau))^{-A}$. Recall that slow variation means that
\begin{align*}
\lim_{\tau \to \infty} \frac{\alpha(\lambda \tau)}{\alpha(\tau)}=1 \qquad \textrm{for any } \lambda>0.
\end{align*}
\end{rem}
Evidently this property is stronger than the condition $\rho(\Lambda)=0$ in Proposition \ref{pro5.12}(1). See \cite[Sec 1.2, 2.4 and Thm. 2.4.7]{BGT}.

\section{Spectral distribution and return probability.}

Let $\lambda \to E_{\lambda}$ be the spectral resolution of the
Laplacian $\Delta=P-I$,
\begin{align*}
-\Delta=\int_0^{\infty} \lambda dE_{\lambda}.
\end{align*}
We define the spectral distribution function $\lambda \to
N(\lambda)$ as follows:
\begin{align*}
N(\lambda):=(E_{\lambda} \delta_e, \delta_e).
\end{align*}
Evidently, $\lambda \to N(\lambda)$ is a right-continuous, non-decreasing step-function. It has jumps at the
points $\lambda_k=\sigma(k)$ and $N(\lambda_k)=1/|G_{k}|$, $ k \in \mathbb{N}$.
Indeed, by the definition of $E_{\lambda}$, we must have
\begin{align*}
N(\lambda_k)=&(E_{\sigma(k)}\delta_e,
\delta_e)=(P_{k}\delta_e, \delta_e)=\\
=& (\delta_e *m_{k},
\delta_e)=m_{k}(\{e\})=\frac{1}{|G_{k}|}.
\end{align*}

On finitely generated groups, and for symmetric probability measures with generating supports and finite second moments, the (dilatational equivalence class of the) function $N$ is stable under quasi-isometry. See \cite{BPSa}, \cite{K1}, \cite{K2}.  Under mild regularity assumptions, $\Lambda$ and $N$ are related by the formula
\begin{align*}
N(\lambda ) \stackrel{d}{\simeq} \frac{1}{\Lambda^{-1}(\lambda)}.
\end{align*}
See \cite{BPSa} and Proposition \ref{pro5} below. 

In the second part of this section we will use our computations of $N$ to evaluate  the return probability function $t\to p(t)$. This is a first step in estimating the transition function/the heat kernel of the random walk on $G$ under consideration. Heat kernel bounds will be presented in the final Section 8.

\begin{pro} \label{pro5}
Under Condition (A), the following properties hold.
\begin{align}\label{equat7.1}
N \stackrel{d}{\simeq} \frac{1}{\mathcal{T}^{-1}}, \qquad \textrm{at} \quad 0.
\end{align}
\begin{align}\label{equat7.2}
\Lambda \asymp \mathcal{T}, \qquad \textrm{at} \quad \infty. 
\end{align}
\end{pro}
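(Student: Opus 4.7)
The plan is as follows. The second assertion (\ref{equat7.2}), $\Lambda \asymp \mathcal{T}$ at infinity, is exactly Theorem \ref{thm2} and can be invoked directly. All the work is in the first assertion (\ref{equat7.1}). I would prove it by matching the jump structure of the step-function $N$ against the bracketing estimates for $\mathcal{T}$ already derived in Section 6.

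Fix $\lambda>0$ near zero and let $k=k(\lambda)$ be the unique integer with $\sigma(k)\leq\lambda<\sigma(k-1)$. By right-continuity and the description of $N$ recalled at the start of the section, $N(\lambda)=1/|G_k|$. Since $\mathcal{T}$ is continuous and strictly decreasing on $(0,\infty)$ by Proposition \ref{pro2}, the claim $N\stackrel{d}{\simeq}1/\mathcal{T}^{-1}$ at $0$ is equivalent to producing constants $b_1,b_2>0$ such that
\begin{align*}
b_1\lambda \;\leq\; \mathcal{T}(|G_k|) \;\leq\; b_2\lambda.
\end{align*}
By Proposition \ref{proposit6.3}, $\mathcal{T}(|G_k|)=\lambda_1(G_k)$ satisfies $\frac{1}{2}\sigma(k)<\mathcal{T}(|G_k|)<\sigma(k)$. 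The upper bound is immediate from $\mathcal{T}(|G_k|)<\sigma(k)\leq\lambda$, so $b_2=1$ works. For the lower bound I would invoke Condition (A) in the form $\sigma(k-1)\leq(1+\lambda_{\ast})\sigma(k)$, where $\lambda_{\ast}$ is the constant from (A). This yields
\begin{align*}
\mathcal{T}(|G_k|) \;>\; \frac{1}{2}\sigma(k) \;\geq\; \frac{1}{2(1+\lambda_{\ast})}\sigma(k-1) \;>\; \frac{\lambda}{2(1+\lambda_{\ast})},
\end{align*}
so $b_1=1/(2(1+\lambda_{\ast}))$ works. Inverting these inequalities through the continuous decreasing function $\mathcal{T}$ translates them to $1/\mathcal{T}^{-1}(b_1\lambda)\leq N(\lambda)\leq 1/\mathcal{T}^{-1}(b_2\lambda)$, which is exactly dilatational equivalence at zero.

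The main point to watch is that the two estimates live on consecutive tail values, $\sigma(k)$ versus $\sigma(k-1)$, and Condition (A) is precisely what couples these two scales. Without (A) the lower bound degenerates and only the one-sided estimate $N(\lambda)\leq 1/\mathcal{T}^{-1}(\lambda)$ survives, while the sandwich requires $\sigma(k)\asymp\sigma(k-1)$. This is the only substantive step; the remainder is bookkeeping with the step-function structure of $N$ and the values of $\mathcal{T}$ at the integer grid $\{|G_k|\}$ computed in Proposition \ref{proposit6.3}.
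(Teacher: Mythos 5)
Your proof is correct and follows essentially the same route as the paper: both arguments reduce (\ref{equat7.1}) to bracketing $\mathcal{T}$ evaluated at the relevant group order between $\tfrac12\sigma(k)$ and $\sigma(k-1)$ (via Proposition \ref{proposit6.3} / Proposition \ref{pro2}(4)) and then invoke Condition (A) exactly once to couple consecutive tails before inverting through the decreasing function $\mathcal{T}$; your use of the tighter two-sided bound from Proposition \ref{proposit6.3} is only a cosmetic difference in bookkeeping. The reduction of the dilatational equivalence to $b_1\lambda\leq\mathcal{T}(|G_k|)\leq b_2\lambda$ and the choice of constants are both sound.
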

\begin{proof} The equivalence
(\ref{equat7.2}) follows from Theorem \ref{thm2}. To prove (\ref{equat7.1}), observe that for $\sigma(k+1) \leq u < \sigma(k)$,
\begin{align*}
{N(u)}=\frac{1}{|G_{k+1}|}.
\end{align*}
Also, by Proposition \ref{pro2}(4), for $|G_k| \leq \tau \leq
|G_{k+1}|$,
\begin{align*}
\frac{1}{2}\sigma(k+1)< \mathcal{T}(\tau) < \sigma(k).
\end{align*}
Define $\theta_1:=\mathcal{T}(|G_{k+1}|)$ and
$\theta_2:=\mathcal{T}(|G_k|)$. Then,
\begin{align*}
\frac{1}{2}\sigma(k+1)<\theta_1<\theta_2<\sigma(k).
\end{align*}
Observe that Condition (A) implies that there exists a constant $\lambda>0$
such that
\begin{align*}
\sigma(k+1)> \frac{1}{1+\lambda} \sigma(k), \qquad k=0,1,\ldots
\,.
\end{align*}
Putting all these facts together and the fact that $\theta \to
\mathcal{T}^{-1}(\theta)$ decreases, we obtain that for $\sigma(k+1)
\leq u < \sigma(k)$,
\begin{align*}
\frac{1}{N(u)}=&|G_{k+1}|=\mathcal{T}^{-1}(\theta_1)>
\mathcal{T}^{-1}(\sigma(k))>\mathcal{T}^{-1}((1+\lambda )\sigma(k+1) ) >\\
>& \mathcal{T}^{-1}((1+\lambda )u),
\end{align*}
and
\begin{align*}
\frac{1}{N(u)} =&|G_{k+1}|=\mathcal{T}^{-1}(\theta_1)<
\mathcal{T}^{-1}\left(\frac{1}{2}\sigma(k+1)\right)<\\
<&
\mathcal{T}^{-1}\left(\frac{1}{2(1+\lambda )}\sigma(k)\right)<
 \mathcal{T}^{-1}\left(\frac{u}{2(1+\lambda)}\right).
\end{align*}
The proof is finished.
\end{proof}
\begin{exa}\label{ex2} $\textrm{ }$\\
Let $G=\bigcup_{n=0}^{\infty}G_n$ and $\upsilon:\mathbb{R}_{+} \to
\mathbb{R}_{+} $ be the volume function, that is
$\upsilon(k)=|G_k|$, $k=0,1,\ldots\,$. Let $\phi: \mathbb{R}_{+} \to \mathbb{R}_{+}$ be any continuous decreasing function such that $\phi(k)\asymp \sigma(k)$ at $\infty$. Then, according to Proposition \ref{pro3}
 and Proposition \ref{pro5}, under Condition (A),
\begin{align*}
N \stackrel{d}{\simeq} \frac{1}{\upsilon \circ
\phi^{-1}}.
\end{align*}
1) Let $G=\mathbb{Z}(2)^{(\infty)}$. Then, $\upsilon(x)=2^x$, and the
formula for $N$ takes the following form:
\begin{align*}
N(u) \stackrel{d}{\asymp} \exp(-\frac{1}{\gamma}\phi^{-1}(u)),
\qquad \textrm{where} \quad \gamma=1/\log 2.
\end{align*}
In particular, we obtain the following estimates.\\
(1.1) If $c_k \asymp q^k$, $0<q<1$, then $\sigma(k) \asymp q^k$, $k \in \mathbb{Z}_{+}$, and, by Remark~\ref{rem1},
\begin{align*}
N(u) \asymp u^{1/A\gamma}\quad \textrm{at } \quad 0, \quad \textrm{where} \quad A=\log\frac{1}{q} \quad \textrm{and} \quad \gamma=\frac{1}{\log 2}.
\end{align*}
(1.2) If $c_k \asymp k^{-p}$, $p>1$, then $\sigma(k) \asymp k^{-p+1}$, $k \in \mathbb{Z}_{+}$,  and
\begin{align*}
N(u) \stackrel{d}{\simeq} \exp\{-u^{\frac{1}{1-p}}\} \qquad
\textrm{at } \quad 0  .
\end{align*}
(1.3) If  $c_k \asymp 1/(k \cdot \log k \cdot \log_{(2)}k \cdot \ldots \cdot (\log_{(n)}k)^p)$, $p>1$, then
\begin{align*}
\sigma(k) \asymp (\log_{(n)}k)^{1-p}, \qquad k \in \mathbb{Z}_{+}
\end{align*}
and
\begin{align*}
N(u) \stackrel{d}{\simeq}
\exp\{-\exp_{(n)}\left(u^{\frac{1}{1-p}}\right)\} \qquad \textrm{at } \quad 0
.
\end{align*}
2) Let $G=S_{\infty}$ be the infinite symmetric
group endowed with its volume function $\upsilon(x)=\Gamma(1+x)$, that is $G_n=S_n$ and $|G_n|=n!$. 

(a) Assume that Condition (A) holds. Then the formula for
the function $N$ takes the following form:
\begin{align*}
\log \frac{1}{N(u)}  \stackrel{d}{\simeq} (\log \Gamma)\circ
\phi^{-1}(u).
\end{align*}
Stirling's formula and straightforward computations give the following results.\\
(2.1) If $c_k \asymp q^k$, $0<q<1$, then $\sigma(k) \asymp q^k$, $k \in \mathbb{Z}_{+}$, and with $A=\log \frac{1}{q}$
\begin{align*}
\log \frac{1}{N(u)} \sim \frac{1}{A} \left(\log \frac{1}{u}\right)\left(
\log \log \frac{1}{u}
  \right)\quad \textrm{at} \quad 0.
\end{align*}
(2.2) If $c_k \asymp k^{-p}$, $p>1$, then $\sigma(k)\asymp k^{-p+1}$, $k \in \mathbb{Z}_{+}$, and
\begin{align*}
N(u) \stackrel{d}{\simeq} \exp \left(- \left(\frac{1}{u}\right)^{\frac{1}{p-1}} \log \frac{1}{u}\right)
\quad \textrm{at} \quad 0 .
\end{align*}
(2.3) If $c_k \asymp 1/(k \cdot\log k \cdot \log_{(2)}k \cdot \ldots \cdot (\log_{(n)}k)^p)$, $p>1$, then
\begin{align*}
\sigma(k) \asymp (\log_{(n)}k)^{-p+1}, \qquad k \in \mathbb{Z}_{+}
\end{align*}
and
\begin{align*}
N(u) \stackrel{d}{\simeq}  \exp \left( -\exp_{(n)}\left(\frac{1}{u}\right)^{\frac{1}{p-1}}\right) \quad
\textrm{at}\quad 0 .
\end{align*}

(b) Assume that $\sigma(k) \asymp |G_k|^{-\gamma}$ for some $\gamma>0$. \textit{We claim that in this case $N(u)$ is of finite order $1/\gamma$ at zero, that is, $u\to 1/N(1/u)$ is of finite order $1/\gamma$ at infinity. In other words, for any $\epsilon >0$ there exist $c_1,c_2>0$ such that}
\begin{align*}
c_1u^{\frac{1}{\gamma}+\epsilon} \leq N(u) \leq  c_2u^{\frac{1}{\gamma}} \qquad \textrm{at}\quad 0.
\end{align*}
Indeed, let $\sigma(k+1)\leq u < \sigma(k) $. By assumption, for some $c_1>0$,
 \begin{align*}
N(u)=\frac{1}{|G_{k+1}|}=\frac{1}{(k+1)!}\leq \left(\frac{u}{c_1}\right)^{1/\gamma}.
\end{align*}
It follows that
\begin{align*}
\rho_{*}(N):= \liminf_{\lambda \to \infty} \frac{\log \frac{1}{N(1/\lambda )}}{\log \lambda} \geq \frac{1}{\gamma}.
\end{align*}
On the other hand, by assumption,
\begin{align*}
N(u)=\frac{1}{(k+1)!}=\frac{(k!)^{-1}}{k+1}\geq \left(\frac{u}{c_2}\right)^{1/\gamma} \frac{1}{k+1}.
\end{align*}
 By Stirling's formula, for any $\epsilon>0$ and for all $k\geq k(\epsilon)>1$,
\begin{align*}
k+1\leq k \log k \leq (1+\epsilon) \log k! \leq \frac{1+\epsilon}{\gamma} \log \frac{c_2}{u}\leq \frac{1+2\epsilon}{\gamma}\log \frac{1}{u}.
\end{align*} 
It follows that for some $c_3>0$,
\begin{align*}
N(u)\geq \frac{c_3 u^{1/\gamma}}{\log \frac{1}{u}}.
\end{align*}
This inequality shows that
\begin{align*}
\rho^{*}(N):= \limsup_{\lambda \to \infty} \frac{\log \frac{1}{N(1/\lambda )}}{\log \lambda} \leq \frac{1}{\gamma}.
\end{align*}
The claim is proved.
$\textrm{ }$ \hfill{$\square$}
\end{exa}

Let $(\mu_t)_{t>0}$ be a weakly continuous convolution semigroup of probability measures on $G$ associated with the measure $\mu=\mu(c)$, that is, $\mu_t|_{t=1}=\mu$ (see Proposition \ref{proposit3.2}). Let $P$ and $\Delta=P-I$ be the corresponding transition operator and the corresponding Laplacian.

We define  $p: \mathbb{R}_{+} \to  \mathbb{R}_{+}$ as follows:
\begin{align*}
p(t):=(\delta_e*\mu_t, \delta_e)=(P^t\delta_e,\delta_e),
\end{align*}
and call this function \textit{the return probability function}. It coincides with the probability of return at time $t>0$ to the identity of the continuous-time process $X(t)$ defined by the semigroup $(\mu_t)_{t>0}$.

Let $(E_{\lambda})$ be the spectral resolution associated with $-\Delta$. Then equation (\ref{equat3.3}) and the spectral theory show that
\begin{align*}
 p(t)=((I+\Delta)^t\delta_e, \delta_e)=\int_0^{\infty} (1 - \lambda)^t dN(\lambda).
\end{align*}
Our first observation is that
\begin{align}\label{equat7.3}
p(t)  \stackrel{d}{\simeq}\int_0^{\infty} e^{-\lambda t}dN(\lambda).
\end{align}
This equivalence relation follows from the fact that the measure defined as  $B \to \int_B dN$ is concentrated on the interval $ [0, \sigma(0)] \subset [0,1]$ and that for all $\lambda \in [0, \sigma(0)]$ and $t>0$, 
\begin{align*}
e^{-\delta \lambda t}\leq (1 - \lambda)^t \leq  e^{-\lambda t}, \qquad \textrm{for some} \quad \delta>1. 
\end{align*}
Writing the function $t \to p(t)$ in the form
\begin{align*}
 p(t)=\exp (-t \cdot R(t)), \quad t>0,
\end{align*}
we observe that since the group $G$ is amenable, $R(t)=o(1)$ at $\infty$.
\begin{pro}\label{proposit7.3}
Let $\upsilon, \phi: \mathbb{R}_{+} \to \mathbb{R}_{+}  $ be two continuous monotone functions such that $\upsilon(k)\asymp|G_k|$ and $\phi(k) \asymp \sigma(k)$, $k\in \mathbb{Z}_{+}$. Then, under Condition (A),
\begin{align*}
 R \stackrel{d}{\asymp} \left(\frac{(\log \upsilon) \circ \phi^{-1}}{id}\right)^{-1} .
\end{align*}
\end{pro}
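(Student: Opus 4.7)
The plan is to reduce the statement to the computation of a Legendre transform, using the spectral distribution $N$. Starting from $p(t)=\exp(-tR(t))$ and the equivalence (\ref{equat7.3}), write
\[
-\log p(t)\stackrel{d}{\simeq}-\log\int_0^{\infty}e^{-\lambda t}dN(\lambda)=\mathcal{K}(\mathcal{M})(t),
\]
where $\mathcal{M}:=-\log N$. Replacing the step function $\mathcal{M}$ by a continuous decreasing extension to $\mathbb{R}_+$ (without affecting any of the dilatational equivalences), Proposition \ref{thm1}(1) gives $\mathcal{K}(\mathcal{M})\sim\mathcal{L}(\mathcal{M})$ at infinity. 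Consequently
\[
R(t)=\frac{-\log p(t)}{t}\stackrel{d}{\asymp}\frac{\mathcal{L}(\mathcal{M})(t)}{t}.
\]

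Next I would invoke Proposition \ref{thm1}(3): $\mathcal{L}(\mathcal{M})\asymp \mathcal{M}\circ(\mathcal{M}/id)^{-1}$. The crucial observation is that at the balancing point $\tau_{*}:=(\mathcal{M}/id)^{-1}(t)$ one has by definition $\mathcal{M}(\tau_{*})=t\tau_{*}$, hence
\[
\frac{\mathcal{L}(\mathcal{M})(t)}{t}\asymp \frac{\mathcal{M}(\tau_{*})}{t}=\tau_{*}=\bigl(\mathcal{M}/id\bigr)^{-1}(t).
\]
Thus $R\stackrel{d}{\asymp}(\mathcal{M}/id)^{-1}$ at infinity, and the proof is reduced to identifying $\mathcal{M}/id$ in terms of $\upsilon$ and $\phi$.

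For this identification, I would appeal to Example \ref{ex2}: under Condition (A), combining Proposition \ref{pro5} with Proposition \ref{pro3} yields $N\stackrel{d}{\simeq}1/(\upsilon\circ\phi^{-1})$ at $0$, and hence $\mathcal{M}\stackrel{d}{\simeq}(\log\upsilon)\circ\phi^{-1}$ near $0$ (taking $-\log$ preserves dilatational equivalence of positive functions diverging at the same endpoint). An easy bookkeeping shows that the relation $f\stackrel{d}{\simeq}g$ for positive monotone functions passes through the transformation $h\mapsto h/id$ (up to constants, giving $\stackrel{d}{\asymp}$), and then through inversion as well, when the functions are strictly monotone. Applying these two operations to $\mathcal{M}\stackrel{d}{\simeq}(\log\upsilon)\circ\phi^{-1}$ gives
\[
R\stackrel{d}{\asymp}\bigl(\mathcal{M}/id\bigr)^{-1}\stackrel{d}{\asymp}\left(\frac{(\log\upsilon)\circ\phi^{-1}}{id}\right)^{-1}\quad\textrm{at}\ \infty,
\]
which is the claim.

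The main obstacle is the bookkeeping of the various equivalence relations. The asymptotic $\mathcal{K}\sim\mathcal{L}$ is only up to $1+o(1)$, so dividing by $t$ produces only $\stackrel{d}{\asymp}$ (not $\asymp$); the Legendre asymptotics in Proposition \ref{thm1}(3) is only up to $\asymp$; and the passage from $N$ to $\mathcal{M}$, then to $\mathcal{M}/id$, and finally to the inverse $(\mathcal{M}/id)^{-1}$, successively degrades factor to dilatational equivalence. Condition (A) is what keeps the procedure from breaking: by Proposition \ref{pro4} (and Proposition \ref{pro2}(4)) the functions $\sigma$ and $\mathcal{T}$ are slowly enough varying for a continuous decreasing extension of $\mathcal{M}$ to satisfy $\mathcal{M}(+0)=+\infty$ with the regularity required by Proposition \ref{thm1}(3), and for the relation $N\stackrel{d}{\simeq}1/(\upsilon\circ\phi^{-1})$ of Example \ref{ex2} to hold in the first place. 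Once these points are handled, each step is automatic.
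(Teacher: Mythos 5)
Your proposal is correct and follows essentially the same route as the paper: the paper's proof is exactly the chain $\log\frac{1}{p}\sim\mathcal{L}\bigl(\log\frac{1}{N}\bigr)\stackrel{d}{\simeq}\mathcal{L}\bigl((\log\upsilon)\circ\phi^{-1}\bigr)\stackrel{d}{\asymp}id\cdot\bigl(\frac{(\log\upsilon)\circ\phi^{-1}}{id}\bigr)^{-1}$, using Proposition \ref{thm1}(1) and (3) together with Propositions \ref{pro5} and \ref{pro3}, which is what you do. Your version merely makes explicit the balancing-point identity $\mathcal{M}(\tau_*)=t\tau_*$ and the bookkeeping of how $\stackrel{d}{\simeq}$ degrades to $\stackrel{d}{\asymp}$ under $h\mapsto h/id$ and inversion, which the paper leaves as "evident".
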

\begin{proof}
Using first Proposition \ref{thm1}, and then Proposition \ref{pro5} and Proposition \ref{pro3}, we can write
\begin{align*}
\log \frac{1}{p} \sim \mathcal{L} \left(\log \frac{1}{N}\right) \stackrel{d}{\simeq} \mathcal{L}((\log \upsilon) \circ \phi^{-1})\stackrel{d}{\asymp} id\cdot \left(\frac{(\log \upsilon) \circ \phi^{-1}}{id}\right)^{-1} .
\end{align*}
This evidently gives the desired result.
\end{proof}
\begin{exa} $\textrm{ }$\\ \label{ex3}
1) Let $G=\mathbb{Z}(2)^{(\infty)}$. Choosing $\upsilon(x)=2^x$, the
formula for $p$ from Proposition \ref{proposit7.3} takes the following form:
\begin{align*} 
p(t)=\exp(-tR(t)), \quad R(t) \stackrel{d}{\asymp} \left(\frac{\phi^{-1}}{id} \right)^{-1}.
\end{align*}
In particular we obtain the following estimates.\\
(1.1) If $c_k \asymp q^k$, $0<q<1$, then $\sigma(k) \asymp q^k$ and $N(\lambda) \asymp \lambda^{1/A\gamma}$ at~$0$, where $A=\log \frac{1}{q}$ and $\gamma=1/ \log 2$ (see Example \ref{ex2}(1)). Hence, a standard Laplace transform argument gives
\begin{align*}
p(t) \stackrel{d}{\simeq} \int_0^{\infty} e^{-\lambda t} dN(\lambda) \asymp t^{-1/A\gamma} \quad \textrm{at }\quad \infty.
\end{align*}
(1.2) If $c_k \asymp k^{-p}$, $p>1$, then $\sigma(k) \asymp k^{-p+1}$ at $\infty$, and
\begin{align*}
R(t) \asymp t^{-\frac{p-1}{p}},\quad p(t)\stackrel{d}{\simeq} \exp\left(-t^{\frac{1}{p}}\right) \quad
\textrm{at }\quad \infty.
\end{align*}
(1.3) If $c_k \asymp 1/(k \cdot\log k \cdot \log_{(2)}k \cdot \ldots \cdot (\log_{(n)}k)^p)$ and $p>1$, then
\begin{align*}
\sigma(k)\asymp (\log_{(n)}k)^{-p+1}\quad \textrm{at} \quad \infty. 
\end{align*}
\textit{We claim that in this case $R \asymp \phi$.} More generally, the following proposition holds true.
\begin{pro} \label{propo6.2}
Assume that $x \to (\phi \circ \exp)(x)$ is doubling, then
\begin{align*}
\left( \frac{\phi^{-1}}{id}  \right)^{-1} \asymp \phi.
\end{align*}
\end{pro}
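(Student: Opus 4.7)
The plan is to work in logarithmic coordinates, where the doubling hypothesis becomes transparent. Setting $\tilde\phi(s):=\phi(e^{s})$, the assumption reads $\tilde\phi(2s)\asymp\tilde\phi(s)$ for large $s$; iterating this bound and interpolating by monotonicity of $\phi$ yields the key comparability
\[\phi(u^{A})\;\asymp\;\phi(u)\qquad\textrm{at }\infty,\]
uniformly whenever $A$ varies in a fixed compact subset of $(0,\infty)$. A second consequence I shall need is that $\phi$ cannot decay faster than a power: iterating $\tilde\phi(2s)\geq c\,\tilde\phi(s)$ shows $\phi(u)\geq Cu^{-\alpha}$ for some $\alpha>0$, so $-\log\phi(u)=O(\log u)$ at infinity.

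Unraveling the definition of the transform, let $x_{*}:=(\phi^{-1}/id)^{-1}(t)$; by construction $\phi^{-1}(x_{*})=t\,x_{*}$, equivalently $x_{*}=\phi(t\,x_{*})$. I must therefore show that the unique solution $x_{*}$ of this fixed-point equation satisfies $x_{*}\asymp\phi(t)$ at infinity. A short preliminary observation is that $t\,x_{*}\to\infty$: if $t\,x_{*}$ stayed bounded then $x_{*}=\phi(t\,x_{*})$ would stay bounded below by a positive constant, forcing $t\,x_{*}\to\infty$, a contradiction. Hence the polynomial lower bound on $\phi$ applies with argument $t\,x_{*}$.

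The crucial step is to verify that $\log(t\,x_{*})\asymp\log t$, equivalently $|\log x_{*}|\leq\beta\log t+O(1)$ with $\beta<1$. Feeding the bound $-\log\phi(u)\leq\alpha\log u+O(1)$ into the identity $-\log x_{*}=-\log\phi(t\,x_{*})$ gives
\[-\log x_{*}\;\leq\;\alpha\log t+\alpha\log x_{*}+O(1).\]
Since $x_{*}<1$ for large $t$, we have $\log x_{*}<0$, and the inequality rearranges to $-\log x_{*}\leq\tfrac{\alpha}{1+\alpha}\log t+O(1)$. This is the main (and essentially only) technical step, and it is where the polynomial lower bound on $\phi$ is indispensable: without it one could not exclude that $t\,x_{*}$ wanders outside a bounded log-scale of $t$, in which case doubling could no longer control $\phi(t\,x_{*})$.

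With $t\,x_{*}=t^{r(t)}$ and $r(t)$ confined to a fixed compact subinterval of $(0,\infty)$ for large $t$, the uniform comparability $\phi(u^{r})\asymp\phi(u)$ established in the first paragraph, together with monotonicity of $\phi$, yields $\phi(t\,x_{*})\asymp\phi(t)$. Consequently
\[x_{*}=\phi(t\,x_{*})\;\asymp\;\phi(t),\]
which is exactly the claim $(\phi^{-1}/id)^{-1}\asymp\phi$.
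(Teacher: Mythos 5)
Your proof is correct, and it reaches the conclusion by a route that is recognizably the same in spirit as the paper's but organized differently. The paper works directly with the formula $\frac{\phi^{-1}(x)}{x}=\exp\bigl((\phi\circ\exp)^{-1}(x)+\log\frac{1}{x}\bigr)$, uses doubling to get $(\phi\circ\exp)(x)\geq x^{-d}$, hence $(\phi\circ\exp)^{-1}(x)\geq x^{-1/d}\gg\log\frac{1}{x}$, sandwiches $\phi^{-1}/id$ between $\phi^{-1}$ and $\exp\bigl(A(\phi\circ\exp)^{-1}\bigr)$, and inverts; doubling is then invoked a second time to absorb the constant $A$ in the log-argument. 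You instead characterize $x_{*}=(\phi^{-1}/id)^{-1}(t)$ by the fixed-point equation $x_{*}=\phi(tx_{*})$, prove the self-improving estimate $-\log x_{*}\leq\frac{\alpha}{1+\alpha}\log t+O(1)$, and conclude via the uniform comparability $\phi(u^{A})\asymp\phi(u)$ for $A$ in a compact subset of $(0,\infty)$. Both arguments ultimately rest on the same two consequences of doubling (a lower bound on the decay of $\phi$, and absorption of a bounded dilation of the log-argument), but yours gets away with the weaker input $\phi(u)\geq Cu^{-\alpha}$, whereas the paper genuinely needs $\phi(u)\gtrsim(\log u)^{-d}$ to make $\log\frac{1}{x}$ negligible; the price you pay is that your exponent $r(t)$ is only confined to a compact interval rather than tending to $1$, which the uniform doubling estimate handles. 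One small imprecision: iterating $\tilde{\phi}(2s)\geq c\,\tilde{\phi}(s)$ with $\tilde{\phi}(s)=\phi(e^{s})$ directly yields $\phi(u)\geq C(\log u)^{-\alpha}$, which is stronger than the $\phi(u)\geq Cu^{-\alpha}$ you state; since the weaker bound follows and is all you use, this does not affect the validity of the argument.
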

\begin{proof}
Write
\begin{align*}
\frac{\phi^{-1}(x)}{x}=\exp \left( (\phi \circ \exp)^{-1}(x)+\log \frac{1}{x}\right).
\end{align*}
Since $\phi \circ \exp $ is doubling, there exists $d>0$, such that $(\phi \circ \exp )(x) \geq x^{-d}$ at $\infty$. It follows that at $0$, $(\phi \circ \exp)^{-1}(x) \geq x^{-1/d}$. Hence, for some $A>1$, and $x$ close to $0$,
\begin{align*}
\phi^{-1}(x)\leq \frac{\phi^{-1}(x)}{x}\leq \exp \left(A (\phi \circ \exp\right)^{-1}(x)).
\end{align*}
This implies, that at $\infty$,
\begin{align*}
\phi(x) \leq \left(\frac{\phi^{-1}}{id}\right)^{-1}(x)\leq (\phi \circ \exp)\left(\frac{1}{A} \log x\right).
\end{align*}
Again, using the fact that $\phi \circ \exp$ is doubling we obtain the desired result. 
\end{proof}
Finally, assuming (1.3), we obtain
\begin{align*}
p(t)\stackrel{d}{\simeq} \exp \left(-\frac{t}{(\log_{(n)}t)^{p-1}} \right) \quad \textrm{ at } \quad \infty.
\end{align*}
2) Let $G=S_{\infty}$. In this case $\upsilon(x)=\Gamma(1+x)$, $\log \upsilon(x) \sim x \log x$ and, assuming that Condition (A) holds, the formula for
$p(t)$ from Proposition \ref{proposit7.3} takes the following form:
\begin{align*}
p(t)=\exp(-tR(t)), \quad R(t)  \stackrel{d}{\asymp} \left(\frac{\phi^{-1}\log \phi^{-1}}{id} \right)^{-1}.
\end{align*}
(2.1) If $c_k \asymp q^k$, $0<q<1$, then $\sigma (k) \asymp q^k$, $k \in \mathbb{Z}_{+}$. Example  \ref{ex2} (2.1) and Proposition \ref{thm1} (1) yield:
\begin{align*}
\log \frac{1}{N(u)} \sim \frac{1}{A}\log \frac{1}{u} \log \log \frac{1}{u}\qquad \textrm{ at } \quad 0, 
\end{align*}
and
\begin{align*}
\log \frac{1}{p(t)}\sim \frac{1}{A} (\log t)(\log \log t) \qquad \textrm{ at } \quad \infty.
\end{align*}
(2.2) If $c_k \asymp k^{-p}$ and $p>1$, then $\sigma (k) \asymp k^{-p+1}$ at $\infty$. This implies
\begin{align*}
R(t) \asymp \left( \frac{\log t}{t}\right)^{1 - \frac{1}{p}}\quad \textrm{and} \quad p(t)\stackrel{d}{\simeq} \exp \left( -t^{\frac{1}{p}}( \log t)^{1-\frac{1}{p}} \right) \quad \textrm{ at } \quad \infty.
\end{align*}
(2.3) Let $c_k \asymp k^{-1}/(\log k \cdot \log_{(2)}k \cdot \ldots \cdot (\log_{(n)}k)^p)$, $p>1$, then 
\begin{align*}
\sigma (k)\asymp (\log_{(n)}k)^{-p+1} \qquad \textrm{at} \quad \infty.
\end{align*}
This case is similar to (1.3):
proceeding as in the proof of Proposition \ref{propo6.2}, we obtain $R\stackrel{d}{\asymp}\phi$. Hence,
\begin{align*}
R(t)  \asymp \phi(t) = (\log_{(n)}t)^{-p+1} \quad \textrm{ at } \quad \infty,
\end{align*}
and
\begin{align*}
p(t)  \stackrel{d}{\simeq} \exp \left(- \frac{t}{(\log_{(n)}t)^{p-1}}\right) \quad \textrm{ at } \quad \infty.
\end{align*}
(2.4) Assume that $\sigma(k)\asymp |G_k|^{-\gamma}$ for some $\gamma>0$. \textit{Then the function $p(t)$ is of finite order $-1/\gamma$.} Indeed, $p(t)$ and $N(\lambda)$ are related by (\ref{equat7.3}). Example \ref{ex2}(2b) and a standard Laplace transform argument yield the result. In particular, for any $\epsilon>0$ there exist $c_1,c_2>0$ such that
\begin{align*} 
 c_2 t^{-\frac{1}{\gamma}-\epsilon}\leq p(t)\leq c_1 t^{-\frac{1}{\gamma}} \qquad \textrm{at} \quad \infty.
\end{align*}
\end{exa}
Some particular results based on the computations in Examples \ref{ex1}, \ref{ex2} and \ref{ex3} are presented in Table 2.

\begin{table}
\begin{center}
\begin{turn}{90}
\setlength\extrarowheight{10pt}
\begin{threeparttable}
\caption{\textbf{Computations of functions $\Lambda(\lambda)$, $N(\tau)$ and $p(t)$ in several examples.}}
\begin{tabular}{|p{6cm}|p{4cm}|p{6cm}|p{5.5cm}|}\hline
\multicolumn{4}{|c|}{ \rule[-0.2cm]{0pt}{0.7cm} Group $G=\bigcup_k G_k, \quad G_k=\mathbb{Z}(2)^{k}$}\\ \hline 
\rule[-0.2cm]{0pt}{0.6cm}$\sigma$ at $\infty$&$\Lambda$ at $\infty$ & $N$ at $0$ & $p$ at $\infty$\\
\hline 
$\sigma(k)\asymp |G_k|^{-\alpha}$, $\alpha>0$ &$\Lambda(\tau) \asymp \tau^{-\alpha}$  \rule[-0.3cm]{0pt}{0.9cm}&
$N(u)\asymp u^{1/\alpha}$ &$p(t)\asymp t^{-1/\alpha}$ \\ \hline
$\sigma(k)\asymp \left(\frac{1}{k}\right)^p$, $p>0$    &\rule[-0.3cm]{0pt}{0.9cm}$\Lambda(\tau) \asymp \frac{1}{(\log \tau)^{p}}$ &$N(u)\stackrel{d}{\simeq} \exp\left\{-\left(\frac{1}{u}\right)^{\frac{1}{p}}\right\}$ &$p(t)\stackrel{d}{\simeq} \exp\left(-t^{\frac{1}{p+1}}\right)$ \\
\hline 
$\sigma(k)\asymp \frac{1}{\left( \log_{(n)}k\right)^p}$, $n\geq 1$, $p>0$ \tnote{\textbf(a)}&\rule[-0.4cm]{0pt}{1cm}$\Lambda(\tau)\asymp\frac{1}{(\log_{(n+1)} \tau)^{p}}\,$ \tnote{\textbf(a)} &$N(u)\stackrel{d}{\simeq}
\exp\left\{-\exp_{(n)}\left(\frac{1}{u}\right)^{\frac{1}{p}}\right\}\, $ \tnote{\textbf(b)} &$p(t)\stackrel{d}{\simeq} \exp \left(-\frac{t}{(\log_{(n)}t)^{p}} \right)\,$ \tnote{\textbf(b)} \\
\hline \hline
\multicolumn{4}{|c|}{ \rule[-0.2cm]{0pt}{0.7cm} Group $G=\bigcup_k G_k, \quad G_k=S_{k}$}\\ \hline 
$\sigma(k)\asymp |G_k|^{-\gamma}$, $\gamma>0$& $\rho(\Lambda)=-\gamma \,$ \tnote{\textbf(c)}& $\rho(N)=\frac{1}{\gamma}\,$ \tnote{\textbf(c)}& \rule[-0.3cm]{0pt}{0.8cm}$\rho(p)=-\frac{1}{\gamma}\,$ \tnote{\textbf(c)} \\ \hline
$\sigma(k)\succ |G_k|^{-\gamma}$, $\forall \gamma>0$\rule[-0.3cm]{0pt}{0.8cm}& $\rho(\Lambda)=0$ & $\rho(N)=+\infty$& $\rho(p)=-\infty$\\ \hline
 $\sigma(k)\asymp q^k$, $0<q<1$ \rule[-0.3cm]{0pt}{0.8cm}& $\log \frac{1}{\Lambda(\tau)} \sim \frac{A \log \tau}{\log_{(2)} \tau}\,$ \tnote{\textbf(d)} &
$\log \frac{1}{N(u)}\sim \frac{1}{A} \left( \log \frac{1}{u}\right) \left( \log_{(2)} \frac{1}{u}\right)\,$\tnote{\textbf(d)} &$\log \frac{1}{p(t)}\sim \frac{1}{A}(\log t)( \log_{(2)}t)\,$ \tnote{\textbf(d)} \\ \hline
$\sigma(k)\asymp \left(\frac{1}{k}\right)^p$, $p>0$    &\rule[-0.3cm]{0pt}{0.9cm}$\Lambda(\tau) \asymp \left( \frac{\log_{(2)}\tau}{\log \tau}\right)^p$ &$N(u)\stackrel{d}{\simeq} \exp\left\{-\left(\frac{1}{u}\right)^{\frac{1}{p}}\log \frac{1}{u}\right\}$ &$p(t)\stackrel{d}{\simeq} \exp\left(-t^{\frac{1}{p+1}}\left( \log t \right)^{\frac{p}{1+p}}\right)$ \\
\hline 
$\sigma(k)\asymp \frac{1}{\left( \log_{(n)}k\right)^p}$, $n\geq 1$, $p>0$&\rule[-0.4cm]{0pt}{1cm}$\Lambda(\tau)\asymp\frac{1}{(\log_{(n+1)} \tau)^{p}} $ &$N(u)\stackrel{d}{\simeq}
\exp\left\{-\exp_{(n)}\left(\frac{1}{u}\right)^{\frac{1}{p}}\right\}$ &$p(t)\stackrel{d}{\simeq} \exp \left(-\frac{t}{(\log_{(n)}t)^{p}} \right)$ \\
\hline 
\end{tabular}
\begin{tablenotes}[para]
\item[\textbf(a)] $\log_{(k)}(t)=\underbrace{\log
\left(1+\log\left(1+...\log(1+t)\right)\right)}_{\textrm{k
times}}$
\item[\textbf(b)] $\exp_{(k)}(t)=\underbrace{\exp\left(\exp\left(...\exp(t)\right)\right)}_{\textrm{k
times}}$
\item[\textbf(c)] $\rho(f)$ is the order of function $f$ at $\infty$(resp. at $0$), see Example \ref{ex1}(2) (resp. \ref{ex2} (2b)). 
\item[\textbf(d)] $A=\log \frac{1}{q}$.
\end{tablenotes}
\end{threeparttable}
\end{turn}
\end{center}
\end{table}

\newpage

\section{Metric structure and heat kernel bounds.}\label{heat}

In Sections 6 and 7 we developed tools to compute the functions $\Lambda$, $N$ and $p$. These functions are closely related to the spectrum of the Laplacian $\Delta$. Our next step is to estimate the transition function/the heat kernel $h(t;x,y)$, associated with the random walk $X(n)$ on $G$ under consideration. For that, we will introduce a metric $\rho=\rho(x,y)$ on $G$ intrinsically associated to $X(n)$. We will show that as in the classical potential theory, $h(t;x,y)$ can be estimated in terms of the variables $t$ and $\rho$.

For any $x \in G$, put $n:=\min \{k \in \mathbb{N}:\, x \in G_k\}$ and define the $\sigma$-value $|x|_{\sigma}$ of $x$ as follows:
\begin{align*} 
|x|_{\sigma}:=\frac{1}{\sigma(n-1)}-1, \qquad \sigma(-1):=1.
\end{align*}
We define $\rho: G\times G \to \mathbb{R}_{+}$ by the following equation:
\begin{align*} 
\rho(x,y):=|x^{-1}y|_{\sigma}.
\end{align*}
Since $n \to \sigma(n)$ is decreasing, for any $a,b \in G$,
\begin{align*} 
|ab|_{\sigma} \leq \max \{ |a|_{\sigma}, |b|_{\sigma}\}.
\end{align*}
Hence $\rho $ is a metric on $G$ and $(G,\rho)$ is a complete (ultra-) metric space.
\begin{pro}\label{pro8.1} Any ball $B_r(a)\subset (G, \rho)$ is of the form
\begin{align*} 
B_r(a)=aG_k, \quad \textrm{ for some } k\geq 0,
\end{align*}
having radius
\begin{align*} 
r=\frac{1}{\sigma(k-1)}-1,
\end{align*}
and volume (with respect to the counting measure)
\begin{align*} 
|B_r(a)|=\frac{1}{N_{-}\left(\frac{1}{1+r}\right)},
\end{align*}
where $N_{-}(\lambda):=N(\lambda-)$ is the left-continuous modification of $N$.
\end{pro}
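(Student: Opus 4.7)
My plan is to use the left-invariance of $\rho$ to reduce the ball $B_r(a)$ to the corresponding ball at the identity: $B_r(a)=a\cdot B_r(e)=a\cdot\{z\in G:|z|_\sigma\le r\}$. Then I would unwind the definition of $|z|_\sigma$. For $z\in G_n\setminus G_{n-1}$, one has $|z|_\sigma=1/\sigma(n-1)-1$, and since the sequence $j\mapsto\sigma(j)$ is strictly decreasing (all $c_i>0$), the inequality $|z|_\sigma\le r$ is equivalent to $\sigma(n-1)\ge 1/(1+r)$. In turn, this says that the minimal $n$ with $z\in G_n$ must satisfy $n\le k^{*}+1$, where $k^{*}:=\max\{m\ge-1:\sigma(m)\ge 1/(1+r)\}$ (with the convention $\sigma(-1)=1$). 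Hence $B_r(e)=G_{k^{*}+1}$.

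Specializing to the value $r=1/\sigma(k-1)-1$ gives $1/(1+r)=\sigma(k-1)$, and strict monotonicity of $\sigma$ forces $k^{*}=k-1$, so that $B_r(a)=aG_k$. This establishes the first two assertions of the proposition. (It also incidentally shows, as is characteristic of ultrametrics, that the ball does not depend on $r$ in a continuous way: many values of $r$ produce the same ball $aG_k$.)

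For the volume, left-invariance of the counting measure yields $|B_r(a)|=|aG_k|=|G_k|$, so it remains to check that $|G_k|=1/N_{-}(1/(1+r))$. Recalling from the preceding definition of $N$ that it is the right-continuous step function with jumps exactly at the points $\sigma(j)$ and with $N(\sigma(j))=1/|G_j|$, I would observe that on each half-open interval $[\sigma(j),\sigma(j-1))$ the value of $N$ is constantly $1/|G_j|$; consequently $N_{-}(\sigma(j-1))=1/|G_j|$. Substituting $j=k$ together with $1/(1+r)=\sigma(k-1)$ yields $N_{-}(1/(1+r))=1/|G_k|$, completing the argument.

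I do not foresee a serious obstacle here; the only point that must be handled with care is the use of the left limit $N_{-}$ rather than $N$ itself. Since $1/(1+r)=\sigma(k-1)$ falls precisely on a jump point of $N$, evaluating $N$ there would return the larger value $1/|G_{k-1}|$, corresponding to the wrong subgroup. Taking the left limit selects the correct side of the jump, and keeping the index shift straight (the pairing of $G_k$ with $\sigma(k-1)$) is the only bookkeeping one needs to watch.
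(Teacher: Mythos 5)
Your argument is correct and is exactly the direct verification the paper has in mind (the authors simply state that the proof ``is straightforward and follows by inspection''): reduce to $B_r(e)$ by left-invariance, translate $|z|_\sigma\le r$ into $\sigma(n-1)\ge 1/(1+r)$, and read off $|G_k|$ from the left limit of $N$ at the jump point $\sigma(k-1)$. Your closing remark about why $N_-$ rather than $N$ must be used is precisely the one subtlety worth recording.
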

\noindent
The proof of Proposition \ref{pro8.1} is straightforward and follows by inspection.
\begin{figure}[h]
\begin{center}
\includegraphics[width=6cm]
{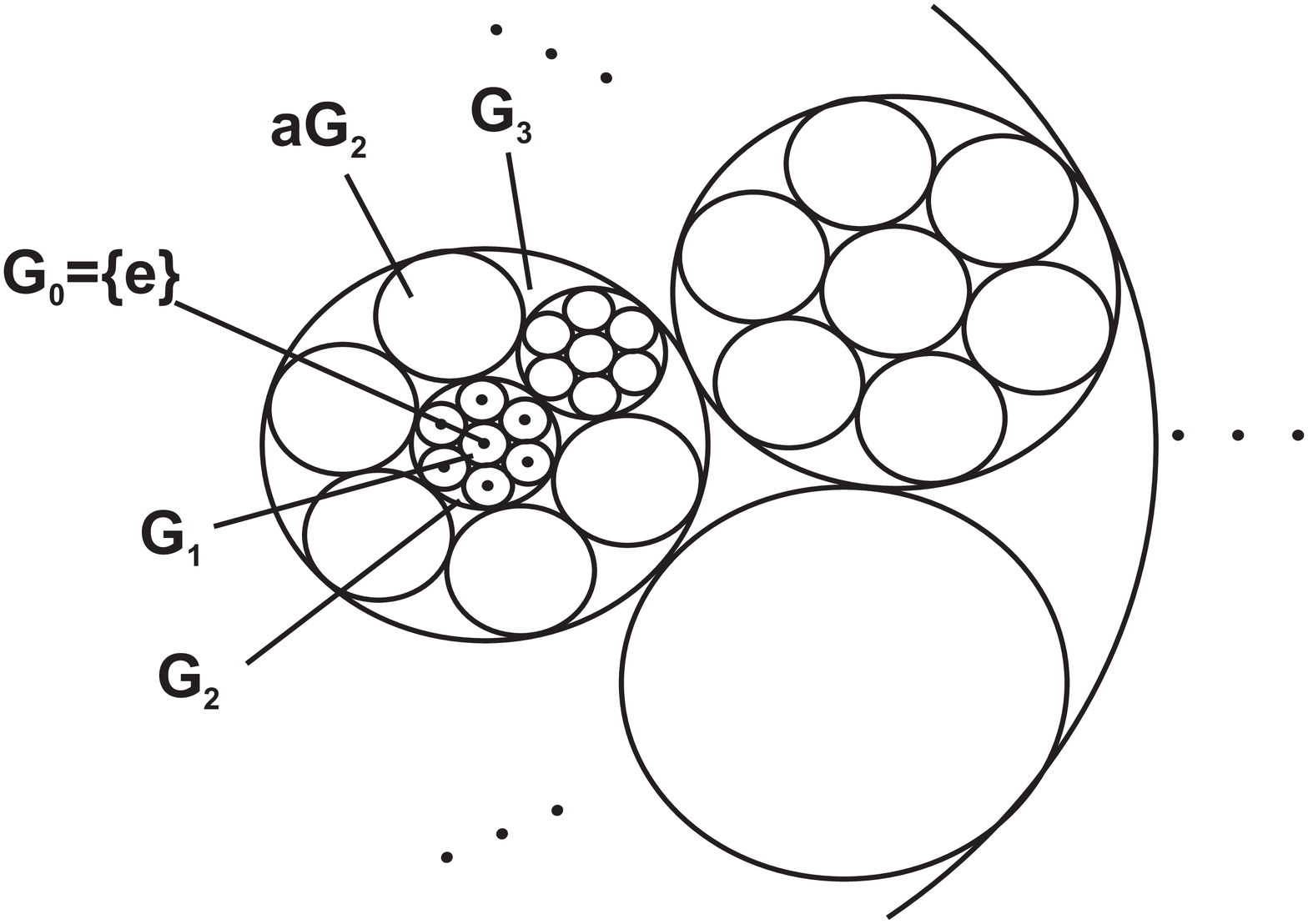}
\caption{The ultra-metric structure of $G$. }
\end{center}
\end{figure}
\begin{pro}\label{pro8.2}
Assume that the following two conditions hold:
\begin{align}\label{equat8.1} 
\lim_{k \to \infty} \frac{\log |G_{k+1}|}{\log |G_k|}=1,
\end{align}
\begin{align} \label{equat8.2}
\sigma(k) \asymp |G_k|^{-\gamma}, \quad \textrm{ for some } \gamma>0.
\end{align}
Then the function $r \to |B_r(a)|$ is of finite order $1/\gamma$.  In particular, for any $\epsilon>0$ there exist $c_1, c_2>0$ such that
\begin{align*}
c_1 r^{\frac{1}{\gamma}}\leq |B_r(a)| \leq c_2 r^{\frac{1}{\gamma}+\epsilon} \qquad \textrm{near} \quad \infty.
\end{align*}
\end{pro}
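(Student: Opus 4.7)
The plan is to read off the volume of $B_r(a)$ from Proposition~\ref{pro8.1}, convert the resulting equation $r = 1/\sigma(k-1) - 1$ into two-sided estimates of $|G_{k-1}|$ and $|G_k|$ via hypothesis (\ref{equat8.2}), and then use (\ref{equat8.1}) to bound $|G_k|$ in terms of $|G_{k-1}|$ for the upper bound.

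First I would fix $r$ large and, by Proposition~\ref{pro8.1}, write $B_r(a) = aG_K$ where $K = K(r)$ is the largest $k \in \mathbb{N}$ with $\sigma(k-1) \geq 1/(r+1)$. In particular $\sigma(K-1) \geq 1/(r+1) > \sigma(K)$, and $|B_r(a)| = |G_K|$. Note that $K(r) \to \infty$ as $r \to \infty$ since $\sigma(k) \to 0$. Using (\ref{equat8.2}), there exist $A_1, A_2 > 0$ such that $A_1 |G_k|^{-\gamma} \leq \sigma(k) \leq A_2 |G_k|^{-\gamma}$ for all $k$. Substituting into the two inequalities for $K$ yields
\begin{align*}
|G_{K-1}| \leq \bigl(A_2(r+1)\bigr)^{1/\gamma}, \qquad |G_K| \geq \bigl(A_1(r+1)\bigr)^{1/\gamma}.
\end{align*}
The second inequality immediately gives the lower bound $|B_r(a)| = |G_K| \geq c_1 r^{1/\gamma}$ for $r$ large.

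For the upper bound, I would exploit (\ref{equat8.1}) to compare $|G_K|$ with $|G_{K-1}|$. The hypothesis $\log |G_{k+1}|/\log|G_k| \to 1$ is equivalent to $\log|G_k| - \log|G_{k-1}| = o(\log|G_{k-1}|)$. Hence, given any $\delta > 0$, there exists $k_0$ such that for all $k \geq k_0$,
\begin{align*}
\frac{|G_k|}{|G_{k-1}|} \leq |G_{k-1}|^{\delta},
\qquad \text{equivalently} \qquad |G_k| \leq |G_{k-1}|^{1+\delta}.
\end{align*}
Since $K(r) \to \infty$, for $r$ sufficiently large we have $K \geq k_0$ and therefore
\begin{align*}
|B_r(a)| = |G_K| \leq |G_{K-1}|^{1+\delta} \leq \bigl(A_2(r+1)\bigr)^{(1+\delta)/\gamma}.
\end{align*}
Given $\epsilon > 0$, choose $\delta > 0$ small enough that $(1+\delta)/\gamma \leq 1/\gamma + \epsilon$. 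Absorbing constants gives $|B_r(a)| \leq c_2 r^{1/\gamma + \epsilon}$ for $r$ large, as required.

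The main (minor) obstacle is the upper bound: the radius condition directly controls $|G_{K-1}|$ but not $|G_K|$, and one cannot in general bound $|G_K|$ by a fixed constant times $|G_{K-1}|$. This is precisely where (\ref{equat8.1}) enters, producing the arbitrarily small loss $\epsilon$ in the exponent. Finally, the finite order assertion $\rho(r \mapsto |B_r(a)|) = 1/\gamma$ is just the logarithmic reformulation of the two-sided bounds above.
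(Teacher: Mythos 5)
Your argument is correct and follows essentially the same route as the paper: identify the index $k$ with $\sigma(k)\le\frac{1}{1+r}<\sigma(k-1)$ so that $|B_r(a)|=|G_k|$, use (\ref{equat8.2}) to get the lower bound on $|G_k|$ and the upper bound on $|G_{k-1}|$, and use (\ref{equat8.1}) in the form $|G_k|\le|G_{k-1}|^{1+\gamma\epsilon}$ for $k$ large to transfer the upper bound to $|G_k|$ at the cost of an $\epsilon$ in the exponent. No gaps.
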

\begin{proof}
Let $\sigma(k) \leq \frac{1}{1+r} < \sigma(k-1)$. Then $|B_r(a)|=|G_k|$. This yields the following two inequalities:
\begin{align} \label{equat8.3}
|B_r(a)|>\frac{c_1}{(\sigma(k))^{1/\gamma}}\geq c_1(1+r)^{1/\gamma}>c_1 r^{1/\gamma}
\end{align}
and, if $k$ is big enough,
\begin{align} \label{equat8.4}
|B_r(a)|=&|G_k|\leq |G_{k-1}|^{1+\gamma \epsilon}\leq \frac{c_2}{(\sigma(k-1))^{\frac{1}{\gamma}+\epsilon}}<\\
\nonumber <& c_2(1+r)^{\frac{1}{\gamma}+\epsilon}<c_3r^{\frac{1}{\gamma}+\epsilon}.
\end{align} 
Inequalities (\ref{equat8.3}) and (\ref{equat8.4}) imply the result.
\end{proof}
\begin{exa}\textrm{ }\\
1) Let $G=\mathbb{Z}(2)^{(\infty)}$. Assume that $\sigma(k)\asymp q^k$, for some $0<q<1$. Then Conditions (\ref{equat8.1}) and (\ref{equat8.2}) hold and Proposition \ref{pro8.2} applies. Observe however that we can also use Proposition \ref{pro8.1} and Example \ref{ex2}(1.1) to get a more precise result: there exist $c_1,c_2>0$ such that
\begin{align*} 
c_2 r^{\frac{1}{\alpha}}\leq |B_r(a)|\leq c_1r^{\frac{1}{\alpha}}, \qquad \textrm{at} \quad \infty,
\end{align*}
where $\alpha= \log \frac{1}{q}/ \log 2$, and $c_1,c_2>0$ are constants.\\
2) Let $G=S_{\infty}$ be the infinite symmetric group. Assume that $\sigma(k)\asymp (k!)^{-\gamma}$ for some $\gamma>0$. Then Conditions (\ref{equat8.1}) and (\ref{equat8.2}) hold. Hence by Proposition \ref{pro8.2} for any $\epsilon>0$ there exist $c_1,c_2>0$ such that
\begin{align*}
c_2 r^{\frac{1}{\gamma}}\leq |B_r(a)|\leq c_1r^{\frac{1}{\gamma}+\epsilon} \qquad \textrm{at} \quad \infty.
\end{align*}
Note that Proposition \ref{pro8.1} and Example \ref{ex2}(2b) give the same result. 
\end{exa}
Let $X(n)$ be the random walk on $G$ with law $\mu=\mu(c)$ starting from $x=e$. For any $\alpha>0$ \textit{the mean $\alpha$-displacement $M_X(\alpha, n)$ of $X(n)$} is defined as follows:
\begin{align*}
M_X(\alpha, n):=E\left\{\rho(e,X(n))^{\alpha} \right\}.
\end{align*}
There is an obvious way to extend the definition of the relation $f \asymp g$ between
functions depending on several numerical variables.
\begin{pro}
With the above notation, the following properties hold true.\\
$1$. For any fixed $n \geq 1$, $M_X(\alpha, n)<\infty$ if and only if $\,\alpha<1$.\\
$2$. Assume that $\alpha <1$ and that Condition (A) holds. Then $M_X(\alpha, n)$ as a function of $(\alpha, n)$ satisfies
\begin{align*}
M_X(\alpha, n)\asymp \frac{n^{\alpha}}{1-\alpha}.
\end{align*}
\end{pro}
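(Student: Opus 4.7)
The plan is to use the distribution-function formula
\[
M_X(\alpha, n) \;=\; \alpha \int_0^\infty r^{\alpha-1}\, \mathbb{P}\bigl(\rho(e, X(n)) > r\bigr)\,dr
\]
together with Proposition~\ref{pro8.1}, which identifies $B_r(e) = G_k$ for $r \in [r_{k-1}, r_k)$, where $r_k := 1/\sigma(k) - 1$. Using $\mu_n = \sum_j C_j(n)m_j$ from Proposition~\ref{proposit3.2} together with $m_j(G_k) = |G_k|/|G_j|$ for $j > k$, one gets $1 - \mu_n(G_k) = \sum_{j > k} C_j(n)(1 - |G_k|/|G_j|)$, and Lagrange's theorem ($|G_k|/|G_j| \leq 1/2$ for $j > k$) yields the two-sided bound $1 - \mu_n(G_k) \asymp 1 - (1-\sigma(k))^n$ with absolute constants. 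Substituting gives the master formula
\[
M_X(\alpha, n) \;\asymp\; \sum_{k \geq 0}\bigl[1 - (1-\sigma(k))^n\bigr]\,(r_k^\alpha - r_{k-1}^\alpha) \qquad (r_{-1}:=0).
\]

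For statement~$1$, the case $\alpha \geq 1$ reduces to $M_X(1, n) = \infty$ via $\rho^\alpha \geq \rho - 1$. A one-term lower bound $\mu_n(G_k \setminus G_{k-1}) \geq c(n)c_k$, obtained from $C_k(n) \geq n(1-\sigma(0))^{n-1}c_k$, further reduces the problem to showing $\sum_k c_k/\sigma(k-1) = \infty$. I would apply the elementary inequality $1 - x \geq \tfrac{1}{2}\min\{1, \log(1/x)\}$ on $(0,1]$ with $x = \sigma(k)/\sigma(k-1)$: either $\sigma(k)/\sigma(k-1) \leq 1/e$ infinitely often and the sum already diverges through constant terms, or eventually the logarithm dominates and $\sum_k \log(\sigma(k-1)/\sigma(k))$ telescopes to $\log(1/\sigma(K)) \to \infty$. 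For $\alpha < 1$, combining Bernoulli's inequality $1 - (1-\sigma(k))^n \leq n\sigma(k)$ with Abel summation gives $\sum_k \sigma(k)(r_k^\alpha - r_{k-1}^\alpha) = \sum_k c_{k+1}r_k^\alpha$ (the boundary $\sigma(N)r_N^\alpha \leq \sigma(N)^{1-\alpha}$ vanishes); the MVT bound $c_{k+1}\sigma(k)^{-\alpha} \leq (\sigma(k)^{1-\alpha} - \sigma(k+1)^{1-\alpha})/(1-\alpha)$ then telescopes to $\sigma(0)^{1-\alpha}/(1-\alpha) < \infty$.

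For statement~$2$, under Condition~(A) I would split the master sum at $k^* := \min\{k : n\sigma(k) \leq 1\}$; Condition~(A) forces $n\sigma(k^*) \in [1/(1+\lambda), 1]$, so $\sigma(k^*) \asymp 1/n$. On $k < k^*$ we have $1 - (1-\sigma(k))^n \asymp 1$, and the sum telescopes to $r_{k^*-1}^\alpha \asymp \sigma(k^*-1)^{-\alpha} \asymp \sigma(k^*)^{-\alpha} \asymp n^\alpha$, where the middle $\asymp$ uses Condition~(A). On $k \geq k^*$ the bound $1 - (1-\sigma(k))^n \asymp n\sigma(k)$ holds (via $1 - e^{-y} \asymp y \wedge 1$), and Abel summation reduces the contribution to $n\sum_{k \geq k^*} c_{k+1}\sigma(k)^{-\alpha}$; Condition~(A) upgrades the MVT to a genuine two-sided comparison $c_{k+1}\sigma(k)^{-\alpha} \asymp \int_{\sigma(k+1)}^{\sigma(k)}\lambda^{-\alpha}d\lambda$ because the intermediate point $\xi \in [\sigma(k+1), \sigma(k)]$ satisfies $\sigma(k)/\xi \leq 1+\lambda$. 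Telescoping yields the tail contribution $\asymp n\sigma(k^*)^{1-\alpha}/(1-\alpha) \asymp n^\alpha/(1-\alpha)$, and summing both pieces gives $M_X(\alpha, n) \asymp n^\alpha + n^\alpha/(1-\alpha) \asymp n^\alpha/(1-\alpha)$.

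The main technical point is to ensure that all absolute constants along the chain of $\asymp$'s are uniform in the pair $(\alpha, n)$, so that the final equivalence is a genuine two-parameter statement. The critical uniformity as $\alpha \uparrow 1$ is delivered by the bound $(\sigma(k)/\xi)^\alpha \leq (1+\lambda)^\alpha \leq 1+\lambda$ coming from Condition~(A); without Condition~(A), the ratio $\sigma(k^*-1)/\sigma(k^*)$ could be arbitrarily large and the low-frequency block would fail to match $n^\alpha$, so Condition~(A) is genuinely necessary for the lower bound in part~$2$.
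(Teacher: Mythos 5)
Your proof follows essentially the same route as the paper's: your layer-cake master formula is the paper's Abel-summation identity $M_X(\alpha,n)=r_1^{\alpha}(1-p(n))+\sum_{k\geq 1}(r_{k+1}^{\alpha}-r_k^{\alpha})\,\mu_n(G\setminus G_k)$, your two-sided tail bound is the paper's relation (\ref{equat8.5}) $\mu_n(G\setminus G_k)\asymp\min\{n\sigma(k),1\}$ obtained from the same Lagrange-index observation, your divergence/convergence dichotomy in part $1$ is the paper's telescoping-logarithm argument and $\int r^{\alpha-2}\,dr$ comparison written in the variable $\sigma$ instead of $r$, and your split at $n\sigma(k^{*})\asymp 1$ with Condition (A) supplying uniformity as $\alpha\uparrow 1$ is exactly the paper's split at $r_{k_0+1}\leq n\leq r_{k_0+2}$. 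The one step you gloss over is the negative boundary term $-n\sigma(k^{*})r_{k^{*}-1}^{\alpha}$ that Abel summation of the high-frequency block produces in the lower bound; since it is comparable to the low-frequency contribution $\asymp n^{\alpha}$, closing the lower bound uniformly in $\alpha$ requires the same small case distinction the paper makes (for $1-\alpha$ bounded away from $0$ the low-frequency block alone gives $n^{\alpha}\asymp n^{\alpha}/(1-\alpha)$ and the nonnegative high block can be discarded, while for $\alpha$ near $1$ the tail integral $\sigma(k^{*})^{1-\alpha}/(1-\alpha)$ strictly dominates the boundary term).
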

\begin{proof}[Proof of (1):] According to Proposition \ref{pro8.1}, each finite group $G_k$ is a ball $B_{r_k}(e)$ of radius
\begin{align*}
r_k=\frac{1}{\sigma(k-1)}-1.
\end{align*}
We claim that as a function of $(k,n)$
\begin{align}\label{equat8.5}
\mu_n (G \backslash G_k) \asymp \min \left\{ \frac{n}{r_{k+1}},1 \right\}.
\end{align}
Indeed, according to Proposition \ref{proposit3.2}, for any $n \geq 1$ and $k \geq 0$,
\begin{align*}
\mu_n (G \backslash G_k)&=\sum_{l \geq 0} C_l(n) m_l (G \backslash G_k)=\sum_{l>k}C_l(n)\left(1-\frac{|G_k|}{|G_l|}  \right)\asymp \\
&\asymp \sum_{l>k}C_l(n)=1-(1-\sigma(k) )^n\asymp \min \{ n \sigma(k),1\}\asymp\\
&\asymp \min \left\{ \frac{n}{r_{k+1}},1 \right\}.
\end{align*}
Next we write
\begin{align*}
M_X(\alpha,n) &=\int_G \rho(e,y)^{\alpha} d\mu_n (y)=\sum_{k \geq 0} \int_{G_{k+1}\backslash G_k} \rho(e,y)^{\alpha} d\mu_n (y)=\\
&=\sum_{k \geq 0} r_{k+1}^{\alpha} \mu_n(G_{k+1}\backslash G_k)=\\
&=\sum_{k \geq 0} r_{k+1}^{\alpha}\left( \mu_n (G \backslash G_k) - \mu_n (G\backslash G_{k+1})\right)=\\
&=r_{1}^{\alpha} \mu_n (G \backslash G_0)+ \sum_{k \geq 1} \left( r_{k+1}^{\alpha}- r_{k}^{\alpha} \right) \mu_n (G \backslash G_k)=\\
&=r_{1}^{\alpha}(1-p(n))+R(\alpha, n).
\end{align*}
Using the equivalence relation (\ref{equat8.5}) we obtain that as a function of~$(\alpha,n)$,
\begin{align}\label{equat8.6}
R(\alpha, n) \asymp \sum_{k \geq 1} \left( r_{k+1}^{\alpha}- r_{k}^{\alpha} \right)\min \left\{ \frac{n}{r_{k+1}},1 \right\}.
\end{align}
Let $0< \alpha <1$, then
\begin{align*}
\sum_{k >> 1} \left( r_{k+1}^{\alpha}- r_{k}^{\alpha} \right)\frac{1}{r_{k+1}}&=\alpha \sum_{k >> 1} \frac{1}{r_{k+1}}
\int_{r_k}^{r_{k+1}} r^{\alpha -1} dr \leq \\ 
& \leq \alpha \sum_{k >> 1} \int_{r_k}^{r_{k+1}} r^{\alpha -2} dr \leq \alpha \int_{r_1}^{\infty} r^{\alpha -2} dr<\infty.
\end{align*}
Hence, for any fixed $n \geq 1$ and $0< \alpha <1$,
\begin{align*}
M_X(\alpha,n) \leq r_1^{\alpha} +R(\alpha, n)<\infty.
\end{align*}
For $\alpha \geq 1$ write
\begin{align*}
\sum_{k >> 1} \left( r_{k+1}^{\alpha}- r_{k}^{\alpha} \right)\frac{1}{r_{k+1}}&=\alpha \sum_{k >> 1} \frac{1}{r_{k+1}}
\int_{r_k}^{r_{k+1}} r^{\alpha -1} dr \geq \\ 
& \geq \sum_{k >> 1} \frac{1}{r_{k+1}}
\int_{r_k}^{r_{k+1}} dr = \sum_{k >> 1} \left( 1- \frac{r_k}{r_{k+1}}\right).
\end{align*}
Assume that the series $\sum_k (1- r_k/ r_{k+1})$ converges. Then, $r_k/ r_{k+1} \to 1$ at $\infty$.
It follows that for any $\epsilon >0$ there exists $k_0>1$ such that for all $k \geq k_0$,
\begin{align*}
r_k < r_{k+1} \leq (1+\epsilon) r_k.
\end{align*}
Hence, for $\epsilon=1$,
\begin{align*}
\sum_{k \geq k_0} \left(1-\frac{r_k}{r_{k+1}} \right) &= \sum_{k \geq k_0} \frac{r_{k+1}-r_k}{r_{k+1}}\geq \frac{1}{2} \sum_{k \geq k_0}  \frac{r_{k+1}-r_k}{r_{k}}=\\
&= \frac{1}{2} \sum_{k \geq k_0} \left(\frac{r_{k+1}}{r_{k}}-1 \right) \geq  \frac{1}{2} \sum_{k \geq k_0} \log \frac{r_{k+1}}{r_{k}}=+\infty.
\end{align*}
This is a contradiction. Hence the series $\sum_k (1- r_k/ r_{k+1})$ diverges and we conclude that for any $n \geq 1$ and $\alpha \geq 1$,
\begin{align*}
M_X(\alpha,n) \geq R(\alpha, n)=\infty.
\end{align*}
\textit{Proof of (2):} Let $0< \alpha <1$. Evidently for all $n \geq 1$,
\begin{align*}
\max \left\{ r_1^{\alpha}(1-p(1)), R(\alpha,n)\right\}<M_X(\alpha,n) <r_1^{\alpha}+ R(\alpha, n).
\end{align*}
Let $c_1,c_2>0$ be constants which justify the equivalence relation (\ref{equat8.6}). Choose $k_0 \geq 1$ such that $r_{k_0+1}\leq n \leq r_{k_0+2}$ and write
\begin{align*}
R(\alpha, n) & \leq c_1 \left( \sum_{k\leq k_0}\left( r_{k+1}^{\alpha}- r_{k}^{\alpha} \right)+n \sum_{k \geq k_0+1} \left( r_{k+1}^{\alpha}- r_{k}^{\alpha} \right)\frac{1}{r_{k+1}} \right)=\\
&= c_1 \left(  r_{k_0+1}^{\alpha}- r_{1}^{\alpha} +n \sum_{k \geq k_0+1} \left( r_{k+1}^{\alpha}- r_{k}^{\alpha} \right)\frac{1}{r_{k+1}} \right)\leq\\
&\leq c_1 \left( n^{\alpha} +\alpha n \int_{k_0+1}^{\infty} r^{\alpha -2} dr \right)=\\
&= c_1 \left( n^{\alpha} +\frac{\alpha n}{1-\alpha} \left( \frac{1}{r_{k_0+1}}\right)^{1-\alpha}\right).
\end{align*}
Condition (A) implies that for some $\lambda>0$,
\begin{align*}
r_{k+1}\leq (1+\lambda ) r_k, \qquad k\geq 1.
\end{align*}
Hence, 
\begin{align*}
R(\alpha, n)&\leq c_1 \left( n^{\alpha} +\frac{\alpha n(1+\lambda)^{1-\alpha}}{1-\alpha} \left( \frac{1}{r_{k_0+2}}\right)^{1-\alpha}\right)\leq \\
&\leq c_1\left( (1-\alpha )+\alpha (1+\lambda )^{1-\alpha}\right) \cdot \frac{n^{\alpha}}{1-\alpha } \leq \\
&\leq c_1 (1+\lambda) \frac{n^{\alpha}}{1-\alpha }.
\end{align*}
This inequality yields the upper bound
\begin{align*}
M_X(\alpha,n) \textrm{\Huge{/}} \frac{n^{\alpha}}{1-\alpha } &\leq c_1(1+\lambda ) +(1-\alpha ) \left( \frac{r_1}{n}\right)^{\alpha} \leq\\
&\leq c_1(1+\lambda )+ \max (r_1,1).
\end{align*}
On the other hand, by (\ref{equat8.6}), for $n^{\alpha} \geq 2(1+\lambda) r_1^{\alpha}$,
\begin{align*}
R(\alpha, n)& \geq c_2 \sum_{k\geq 1}\left( r_{k+1}^{\alpha}- r_{k}^{\alpha} \right)\min \left\{\frac{n}{r_{k+1}},1\right\}=\\
&=c_2 \left( \sum_{k\leq k_0}\left( r_{k+1}^{\alpha}- r_{k}^{\alpha} \right)+n \sum_{k \geq k_0+1} \left( r_{k+1}^{\alpha}- r_{k}^{\alpha} \right)\frac{1}{r_{k+1}} \right)=\\
&= c_2 \left( r_{k_0+1}^{\alpha} -r_1^{\alpha} + n \alpha \sum_{k \geq k_0+1} \frac{1}{r_{k+1}} \int_{r_k}^{r_{k+1}} r^{\alpha -1} dr \right) \geq\\
&\geq c_2 \left( \frac{r_{k_0+2}^{\alpha}}{1+\lambda} -r_1^{\alpha} + \frac{ n \alpha}{1+\lambda} \sum_{k \geq k_0+1} \frac{1}{r_{k}} \int_{r_k}^{r_{k+1}} r^{\alpha -1} dr \right) \geq \\ 
&\geq  c_2 \left( \frac{n^{\alpha}}{1+\lambda} -r_1^{\alpha} + \frac{ n \alpha}{1+\lambda} \sum_{k \geq k_0+1} \int_{r_k}^{r_{k+1}} r^{\alpha -2} dr \right)=\\
&=  c_2 \left( \frac{n^{\alpha}}{1+\lambda} -r_1^{\alpha} + \frac{ n \alpha}{1+\lambda} \int_{r_{k_0+1}}^{\infty} r^{\alpha -2} dr\right)=\\
&=c_2 \left( \frac{n^{\alpha}}{1+\lambda} -r_1^{\alpha} + \frac{n \alpha}{(1+\lambda )(1-\alpha )}  \left( \frac{1}{r_{k_0+1}} \right)^{1-\alpha}\right) \geq\\
&\geq c_2 \left( \frac{n^{\alpha}}{1+\lambda} -r_1^{\alpha} + \frac{n \alpha}{(1+\lambda )(1-\alpha )} \cdot \frac{1}{n^{1-\alpha}} \right)=
\end{align*}
\begin{align*}
&=  c_2 \left( \frac{n^{\alpha}}{(1+\lambda )(1-\alpha )} -  r_1^{\alpha} \right)\geq\\
&\geq \frac{c_2 }{2(1+\lambda )}\cdot\frac{n^{\alpha}}{1-\alpha}.
\end{align*}
Hence, for $n^{\alpha} \geq 2(1+\lambda) r_1^{\alpha}$ and $0<\alpha <1$, we obtain
\begin{align*} 
M_X(\alpha,n) \textrm{\Huge{/}} \frac{n^{\alpha}}{1-\alpha } \geq R(\alpha,n) \textrm{\Huge{/}} \frac{n^{\alpha}}{1-\alpha } \geq \frac{c_2 }{2(1+\lambda )}.
\end{align*}
If $n^{\alpha} < 2(1+\lambda) r_1^{\alpha}$ and $0<\alpha \leq \frac{1}{2}$,
\begin{align*} 
M_X(\alpha,n) \textrm{\Huge{/}} \frac{n^{\alpha}}{1-\alpha } &\geq \frac{r_1^{\alpha}(1-p(1))(1-\alpha )}{n^{\alpha}} \geq \frac{r_1^{\alpha}(1-p(1))(1-\alpha )}{2(1+\lambda) r_1^{\alpha}}=\\
&= \frac{(1-p(1))(1-\alpha )}{2(1+\lambda)}\geq \frac{(1-p(1))}{4(1+\lambda)}.
\end{align*}
If $\frac{1}{2}< \alpha <1$ we repeat our computations from above but without the term $\sum_{k \leq k_0}(r_{k+1}^{\alpha} - r_k^{\alpha})\min \left\{ \frac{n}{r_{k+1}},1\right\}$ and obtain
\begin{align*} 
R(\alpha, n) &\geq c_2 \sum_{k \geq k_0+1} \left( r_{k+1}^{\alpha} - r_k^{\alpha}\right) \min \left\{\frac{n}{r_{k+1}},1\right\} \geq \frac{c_2 \alpha}{1+\lambda}\cdot \frac{n^{\alpha}}{1-\alpha} >\\
&> \frac{c_2}{2(1+\lambda)}\cdot \frac{n^{\alpha}}{1-\alpha}.
\end{align*}
In particular, for $n^{\alpha} < 2(1+\lambda) r_1^{\alpha}$ and $\frac{1}{2}< \alpha <1$,
\begin{align*} 
M_X(\alpha,n) \textrm{\Huge{/}} \frac{n^{\alpha}}{1-\alpha } \geq R(\alpha,n) \textrm{\Huge{/}} \frac{n^{\alpha}}{1-\alpha } > \frac{c_2 }{2(1+\lambda )}.
\end{align*}
The three inequalities obtained above yield the desired low bound.
The proof is finished.
\end{proof}

Let $(\mu_t)_{t>0}$ be a weakly continuous convolution semigroup of probability measures such that $\mu_t|_{t=1}=\mu(c)$, see Proposition \ref{proposit3.2}. Write
\begin{align*} 
P_tf(x)=f*\mu_t(x)=\int_G f(y)h(t;x,y)dm(y),
\end{align*}
and call $(P_t)_{t>0}$ \textit{the heat semigroup} and $h(t;x,y)$ \textit{the heat kernel} associated to the measure $\mu(c)$.
\begin{pro}\label{pro8.4}
Let $\mu$, $h$ be as above and $\rho:=\rho(x,y)$, then 
\begin{align} \label{equat8.7}
h(t;x,y)=t\int_0^{\frac{1}{1+\rho}}N(\lambda)(1-\lambda)^{t-1}d\lambda.
\end{align}
\end{pro}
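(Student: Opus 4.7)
The plan is to interpret the heat kernel directly as $h(t;x,y)=\mu_t(x^{-1}y)$ and then show that the point values of $\mu_t$ given by Proposition~\ref{pro6} coincide with the right-hand side of~(\ref{equat8.7}), by integrating the step function $N$ against the density $t(1-\lambda)^{t-1}\,d\lambda$ piece by piece.

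First I would identify $h$. Since $m$ is the counting measure and $\mu_t$ is symmetric (it is a convex combination of Haar measures of symmetric subgroups and hence inherits the symmetry $\mu_t(z)=\mu_t(z^{-1})$), one has
\begin{align*}
P_tf(x)=f*\mu_t(x)=\sum_{y\in G}f(y)\mu_t(y^{-1}x)=\sum_{y\in G}f(y)\mu_t(x^{-1}y),
\end{align*}
so $h(t;x,y)=\mu_t(x^{-1}y)$. Setting $z=x^{-1}y$, pick the unique $k\geq 0$ with $z\in G_k\setminus G_{k-1}$ (where $G_{-1}=\emptyset$). By the definition of $\rho$ we have $\rho(x,y)=|z|_\sigma=1/\sigma(k-1)-1$, hence $1/(1+\rho)=\sigma(k-1)$. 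By Proposition~\ref{pro6},
\begin{align*}
\mu_t(z)=\sum_{n\geq k}\frac{C_n(t)}{|G_n|},
\end{align*}
with $C_n(t)=(c_0+\cdots+c_n)^t-(c_0+\cdots+c_{n-1})^t=(1-\sigma(n))^t-(1-\sigma(n-1))^t$.

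Next I would evaluate the right-hand side of~(\ref{equat8.7}). The spectral distribution $N$ is a step function: on each interval $[\sigma(n),\sigma(n-1))$ it takes the constant value $1/|G_n|$. Since $\sigma(k-1)$ is precisely the right endpoint of the $k$-th such interval, splitting the integration domain $[0,\sigma(k-1))=\bigsqcup_{n\geq k}[\sigma(n),\sigma(n-1))$ yields
\begin{align*}
t\int_0^{\sigma(k-1)}N(\lambda)(1-\lambda)^{t-1}\,d\lambda
&=\sum_{n\geq k}\frac{1}{|G_n|}\,t\int_{\sigma(n)}^{\sigma(n-1)}(1-\lambda)^{t-1}\,d\lambda\\
&=\sum_{n\geq k}\frac{1}{|G_n|}\bigl[(1-\sigma(n))^t-(1-\sigma(n-1))^t\bigr]\\
&=\sum_{n\geq k}\frac{C_n(t)}{|G_n|}.
\end{align*}
Comparing the two expressions gives $h(t;x,y)=\mu_t(z)=t\int_0^{1/(1+\rho)}N(\lambda)(1-\lambda)^{t-1}\,d\lambda$, as desired.

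There is no serious obstacle: the proof is a direct bookkeeping argument matching the piecewise structure of $N$ to the explicit formula for $\mu_t$ from Proposition~\ref{pro6}. The only points requiring mild care are the symmetry step ensuring $h(t;x,y)=\mu_t(x^{-1}y)$, and the algebraic identification $(1-\sigma(n))^t-(1-\sigma(n-1))^t=C_n(t)$, which is immediate from $1-\sigma(n)=c_0+\cdots+c_n$. The case $z=e$ (i.e.\ $k=0$, $\rho=0$, $1/(1+\rho)=1=\sigma(-1)$) is handled by the same computation, since the series then sums over all $n\geq 0$ and the integral runs over all of $[0,1)$.
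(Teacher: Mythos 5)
Your argument is correct and is essentially the paper's own proof: both identify $h(t;x,y)=\mu_t(x^{-1}y)$, invoke Proposition \ref{pro6}, and match the series $\sum_{n\geq k}C_n(t)/|G_n|$ with the integral by decomposing $[0,\sigma(k-1))$ into the intervals $[\sigma(n),\sigma(n-1))$ on which $N\equiv 1/|G_n|$ and using $t\int_{\sigma(n)}^{\sigma(n-1)}(1-\lambda)^{t-1}\,d\lambda=(1-\sigma(n))^t-(1-\sigma(n-1))^t$. The only cosmetic difference is that you run the computation from the integral toward the series rather than the reverse.
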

\begin{proof}
Put $x^{-1}y=z$, then
\begin{align*} 
h(t;x,y)=\mu_t(\{x^{-1}y\})=\mu_t(\{z\}).
\end{align*}
By Proposition \ref{pro6}, for $z\in G_k \backslash G_{k-1}$,
\begin{align*} 
\mu_t(z)=& \sum_{ n \geq k} \frac{1}{|G_n|} \left[ (1-\sigma (n) )^t -(1-\sigma (n-1))^t \right] =\\
=& \sum_{n\geq k} N(\sigma(n) ) \left[ (1-\sigma(n))^t -(1-\sigma(n-1))^t\right]=\\
=& \sum_{n\geq k} N(\sigma(n) )t\int_{\sigma(n)}^{\sigma(n-1)} (1-\lambda)^{t-1}d\lambda =\\
=& t\int_0^{\sigma(k-1)} N(\lambda) (1-\lambda)^{t-1}d\lambda= \\
=& t\int_0^{\frac{1}{1+|z|_{\sigma}}} N(\lambda) (1-\lambda)^{t-1}d\lambda=  t\int_0^{\frac{1}{1+\rho}} N(\lambda) (1-\lambda)^{t-1}d\lambda.
\end{align*}
The proof is finished.
\end{proof}
Recall that according to (\ref{equat7.3}) the return probability $p(t)$ and the spectral distribution $N(\lambda)$ are related by the formula
\begin{align*}
p(t)=h(t;e,e)=t\int_0^1 N(\lambda)(1-\lambda)^{t-1}d\lambda \stackrel{d}{\asymp}t\int_0^1 N(\lambda)e^{-t\lambda}d\lambda.
\end{align*}
It is easy to see that we always have
\begin{align}  \label{equat8.8}
p(t)\geq \frac{(1-\sigma(0))^2}{2e^2} N\left( \frac{1}{t}\right), \qquad t\geq 1.
\end{align}
Indeed, for $t \geq 2$,
\begin{align*} 
p(t)\geq t \int_{\frac{1}{t}}^1 N(\lambda) (1-\lambda )^{t-1} d\lambda \geq N\left( \frac{1}{t}\right) \left( 1-\frac{1}{t}\right)^t \geq e^{-2} N\left( \frac{1}{t}\right)
\end{align*}
and, for $1\leq t<2$,
\begin{align*} 
p(t)\geq \int_{\sigma(0)}^1 N(\lambda)(1-\lambda) d\lambda =\frac{1}{2} (1-\sigma(0))^2.
\end{align*}
In some cases however
\begin{align*} 
p(t)\stackrel{d}{\asymp} N\left( \frac{1}{t}\right) \qquad \textrm{at} \quad \infty.
\end{align*}
This happens if for instance, $p(t)\asymp t^{-\beta}$ at $\infty$ for some $\beta>0$, equivalently, $N(\lambda)\asymp \lambda^{\beta}$ at $0$. See \cite[Thm. 1.7.1.]{BGT}, where classical Karamata arguments justify this statement. 
\begin{pro}\label{pro8.5}
Let $h$, $N$ and $\rho$ be as above. The following relation holds:
\begin{align}\label{equat8.9} 
h(t;x,y)\stackrel{d}{\asymp} t\int_0^{\frac{1}{1+\rho}} N(\lambda)e^{-t\lambda}d\lambda.
\end{align}
\end{pro}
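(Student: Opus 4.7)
The argument starts from the exact formula of Proposition~\ref{pro8.4},
$$h(t;x,y) = t\int_0^{1/(1+\rho)} N(\lambda)(1-\lambda)^{t-1}\, d\lambda,$$
so the task reduces to comparing the kernels $(1-\lambda)^{t-1}$ and $e^{-t\lambda}$ on the integration range $[0, 1/(1+\rho)]$, up to dilation in $t$. The decisive observation is that $a:=\sigma(0)=1-c_0<1$, so on $[0,a]$ the factor $1-\lambda$ is bounded away from $0$. Using the elementary two-sided bound $\lambda \le -\log(1-\lambda) \le \lambda/(1-a)$, and setting $C:=1/(1-a)$, one obtains
$$e^{-Ct\lambda}\le (1-\lambda)^{t-1}\le e^{a}\,e^{-t\lambda},\qquad \lambda\in[0,a],\ t\ge 1.$$

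Write $I_2(t,\rho):=t\int_0^{1/(1+\rho)}N(\lambda)e^{-t\lambda}\,d\lambda$. When $\rho>0$ we have $x^{-1}y\in G_n\setminus G_{n-1}$ for some $n\ge 1$, and therefore $1/(1+\rho)=\sigma(n-1)\le a$: the entire integration range lies in $[0,a]$. Plugging the pointwise estimate into the integral gives
$$\tfrac{1}{C}\,I_2(Ct,\rho)\le h(t;x,y)\le e^{a}\,I_2(t,\rho),$$
with constants uniform in $x,y$, which is precisely the $\stackrel{d}{\asymp}$ relation (dilation only in the $t$-variable).

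The case $\rho=0$, i.e.\ $h(t;x,x)=p(t)$, is slightly different because the integration extends to $\lambda=1$, where the pointwise comparison degenerates. The plan is to invoke equation~(\ref{equat7.3}) which, combined with integration by parts and $N(0)=0$, yields
$$p(t)\stackrel{d}{\simeq}\int_0^\infty e^{-t\lambda}\,dN(\lambda)=t\int_0^\infty N(\lambda)\,e^{-t\lambda}\,d\lambda.$$
Since the spectral measure $dN$ is supported in $[0,a]$ and $N(\lambda)=1$ for $\lambda\ge a$, the difference between this integral and $I_2(t,0)$ is bounded by $t\int_1^\infty e^{-t\lambda}\,d\lambda=e^{-t}$. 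The main obstacle is precisely to discard this exponentially small tail: one uses the amenability of $G$ (automatic for any locally finite group) together with the Kesten--Berg criterion quoted in the introduction, equivalently the fact that $0$ is an accumulation point of the spectrum of $-\Delta$, to conclude that $p(t)^{1/t}\to 1$; hence $e^{-t}=o(p(t))$ and the tail is absorbed into multiplicative constants, yielding $p(t)\stackrel{d}{\asymp}I_2(t,0)$ and completing the proof.
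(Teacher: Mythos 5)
Your argument is correct and follows essentially the same route as the paper: for $\rho>0$ the integration range lies in $[0,\sigma(0)]$ with $\sigma(0)<1$, and the paper likewise compares $(1-\lambda)^{t}$ with $e^{-t\lambda}$ from above and with $e^{-\delta t\lambda}$, $\delta=\tfrac{1}{1-\sigma(0)}$, from below, while the $\rho=0$ case is referred back to the return-probability equivalence (\ref{equat7.3}). Your treatment of the $\rho=0$ case is in fact slightly more careful than the paper's one-line reduction, since you explicitly dispose of the $e^{-t}$ tail coming from $\lambda\in[\sigma(0),1]$ (which is anyway dominated by $p(t)$ because $p(t)^{1/t}\to 1$).
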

\begin{proof}
When $\rho=\rho(x,y)=0$, (\ref{equat8.9}) reduces to (\ref{equat7.3}). Let $\rho>0$, then $\rho\geq \frac{1}{\sigma(0)}-1$ and $\frac{1}{1+\rho}\leq \sigma(0) <1$. Therefore we can write
\begin{align*} 
h(t;x,y)= t\int_0^{\frac{1}{1+\rho}} N(\lambda)(1-\lambda)^{t}\frac{d\lambda}{1-\lambda} \leq \frac{t}{1-\sigma(0)}\int_0^{\frac{1}{1+\rho}} N(\lambda)e^{-t\lambda}d\lambda.
\end{align*}
On the other hand,
\begin{align*} 
h(t;x,y)\geq t\int_0^{\frac{1}{1+\rho}} N(\lambda)(1-\lambda)^{t}d\lambda \geq t\int_0^{\frac{1}{1+\rho}} N(\lambda)e^{-\delta t\lambda}d\lambda,
\end{align*}
where one can choose $\delta=\frac{1}{1-\sigma(0)}$.
\end{proof}
There is an obvious way to define $f  \stackrel{d}{\leq} g$ so that
\begin{align*} 
 f  \stackrel{d}{\leq} g\quad \textrm{and} \quad  g  \stackrel{d}{\leq} f \iff  f  \stackrel{d}{\asymp} g.
\end{align*}
\begin{pro}\label{pro8.6} Assume that $t+\rho \to \infty$. 
Then the following conditions hold true.\\
$1$. If $\,t/(1+\rho)\leq 1$, then
\begin{align} \label{equat8.10}
h(t;x,y)\stackrel{d}{\asymp}\frac{t}{1+\rho}N\left(\frac{1}{1+\rho}\right).
\end{align}
$2$. If $\,t/(1+\rho) >1$, then
\begin{align} \label{equat8.11}
N\left( \frac{1}{t}\right) \stackrel{d}{\leq} h(t;x,y) \leq p(t).
\end{align}
In particular, for all $x,y \in G$ and $t\geq 1$,
\begin{align}  \label{equat8.12}
h(t;x,y) \stackrel{d}{\geq} \frac{t}{t+\rho}N\left(\frac{1}{t+\rho}\right)
\end{align}
and
\begin{align}\label{equat8.13}
h(t;x,y) \stackrel{d}{\leq} \frac{t}{t+\rho}p(t+\rho).
\end{align}
\end{pro}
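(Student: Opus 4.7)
The plan is to derive all four inequalities from Proposition \ref{pro8.5}, i.e.
\begin{equation*}
h(t;x,y) \stackrel{d}{\asymp} t\int_0^{1/(1+\rho)} N(\lambda)\, e^{-t\lambda}\, d\lambda,
\end{equation*}
by splitting into the regimes $t/(1+\rho)\le 1$ and $t/(1+\rho)>1$ and patching. The basic tools are the monotonicity of $N$, the dilatational identity $p(t)\stackrel{d}{\asymp} t\int_0^1 N(\lambda)e^{-t\lambda}\,d\lambda$ from (\ref{equat7.3}), the pointwise inequality (\ref{equat8.8}) (namely $p(s)\ge c_0 N(1/s)$), and the fact that $p$ is non-increasing (immediate from $p(t)=\int_0^1 (1-\lambda)^t\,dN(\lambda)$).

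For (\ref{equat8.10}), in the regime $t/(1+\rho)\le 1$ the product $t\lambda$ stays in $[0,1]$ throughout the integration, so $e^{-t\lambda}\asymp 1$ and $h(t;x,y)\stackrel{d}{\asymp} t\int_0^{1/(1+\rho)} N(\lambda)\,d\lambda$. The monotonicity bound $\int_0^{1/(1+\rho)} N\le \tfrac{1}{1+\rho}N(\tfrac{1}{1+\rho})$ yields the upper estimate, while restricting integration to $[\tfrac{1}{2(1+\rho)},\tfrac{1}{1+\rho}]$ produces the matching dilatational lower estimate. For (\ref{equat8.11}), in the regime $1/t<1/(1+\rho)$, the upper bound follows by enlarging the integration domain to $[0,1]$ and invoking the identification with $p(t)$; the lower bound follows by restricting to $[0,1/t]$, where $e^{-t\lambda}\in[e^{-1},1]$, and applying the same monotonicity argument to produce $c\,N(1/(2t))\stackrel{d}{\asymp} N(1/t)$.

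The global statements (\ref{equat8.12})--(\ref{equat8.13}) are obtained by patching the two regimes. In regime~1, the constraints $t\ge 1$ and $1+\rho\ge t$ give $1+\rho\le t+\rho\le 2(1+\rho)$, whence $t+\rho\asymp 1+\rho$ and $\tfrac{t}{t+\rho}\asymp \tfrac{t}{1+\rho}$; then (\ref{equat8.12}) reduces to (\ref{equat8.10}), and (\ref{equat8.13}) follows from $\tfrac{t}{t+\rho}p(t+\rho) \ge c_0 \tfrac{t}{t+\rho} N(\tfrac{1}{t+\rho}) \stackrel{d}{\asymp} h(t;x,y)$ via (\ref{equat8.8}). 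In regime~2, $t\le t+\rho\le 2t$ so $\tfrac{t}{t+\rho}\asymp 1$; (\ref{equat8.12}) follows from $\tfrac{t}{t+\rho}N(\tfrac{1}{t+\rho})\le N(1/t)\stackrel{d}{\le}h(t;x,y)$ by monotonicity of $N$, and (\ref{equat8.13}) uses $h(t;x,y)\stackrel{d}{\le} p(t)$ from (\ref{equat8.11}) combined with the monotonicity of $p$: with the dilation $b=\tfrac12$ one has $p(\tfrac{t+\rho}{2})\ge p(t)$ (because $\tfrac{t+\rho}{2}\le t$) and $\tfrac{t}{t+\rho}\ge\tfrac12$, which gives the required dilatational upper bound.

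The main obstacle is the careful bookkeeping of dilatational constants when passing between the regimes: in particular, (\ref{equat8.13}) in regime~2 forces a genuine shift of the argument of $p$, and this is only legitimate because $p$ is non-increasing -- a property not explicitly emphasized in the text but immediate from the spectral representation of $p$. Once the constants are tracked, all four inequalities fall out uniformly.
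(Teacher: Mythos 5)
Your argument is correct and follows essentially the same route as the paper: both rest on the integral representation of Proposition \ref{pro8.5}, the monotonicity of $N$, inequality (\ref{equat8.8}), and the (implicit) monotonicity of $p$, with only cosmetic differences in where the dilation constants are placed (e.g.\ you apply (\ref{equat8.8}) directly at $t+\rho$ where the paper first passes to $2(1+\rho)$). The one point worth tightening is the right-hand inequality of (\ref{equat8.11}): the statement asserts the exact bound $h(t;x,y)\leq p(t)$, which your enlargement of the integration domain only yields dilatationally, whereas it holds pointwise because $h(t;x,y)=\mu_t(x^{-1}y)\leq \mu_t(e)=p(t)$ by the explicit formula of Proposition \ref{pro6}; this does not affect (\ref{equat8.12})--(\ref{equat8.13}).
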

\begin{cor}\label{cor8.7} Assume that $p(t)\stackrel{d}{\asymp}N\left(\frac{1}{t}\right)$. Then for all $x,y\in G$ and $t\geq 1$,
\begin{align} 
h(t;x,y) \stackrel{d}{\asymp} \frac{t}{t+\rho}N\left(\frac{1}{t+\rho}\right).
\end{align}
\end{cor}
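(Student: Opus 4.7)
The plan is to derive Corollary \ref{cor8.7} as an immediate consequence of the unified two-sided bounds \eqref{equat8.12} and \eqref{equat8.13} established in Proposition \ref{pro8.6}, combined with the standing hypothesis $p(t)\stackrel{d}{\asymp} N(1/t)$. No new estimate on $h$ is needed; the corollary is a packaging of what has already been proved.

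First, I would quote \eqref{equat8.12} verbatim to obtain the lower bound
\begin{align*}
h(t;x,y)\stackrel{d}{\geq}\frac{t}{t+\rho}\,N\!\left(\frac{1}{t+\rho}\right),
\end{align*}
which already has exactly the right form. Next I would quote \eqref{equat8.13} to get
\begin{align*}
h(t;x,y)\stackrel{d}{\leq}\frac{t}{t+\rho}\,p(t+\rho).
\end{align*}
Here I apply the hypothesis $p(s)\stackrel{d}{\asymp} N(1/s)$ at $s=t+\rho$: by the very definition of $\stackrel{d}{\asymp}$, there exist constants $a_1,a_2,b_1,b_2>0$ such that
\begin{align*}
a_1\,N\!\left(\frac{1}{b_1(t+\rho)}\right)\leq p(t+\rho)\leq a_2\,N\!\left(\frac{1}{b_2(t+\rho)}\right).
\end{align*}
Plugging this into the upper bound for $h$ and absorbing the constants into the definition of $\stackrel{d}{\leq}$ yields
\begin{align*}
h(t;x,y)\stackrel{d}{\leq}\frac{t}{t+\rho}\,N\!\left(\frac{1}{t+\rho}\right).
\end{align*}

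Combining the two directions gives $h(t;x,y)\stackrel{d}{\asymp}\frac{t}{t+\rho}N\!\left(\frac{1}{t+\rho}\right)$ for all $x,y\in G$ and $t\geq 1$. The only point worth verifying explicitly is that the dilational factors coming from $\stackrel{d}{\asymp}$ compose properly with the multiplicative factor $t/(t+\rho)$; since that factor is treated as a separate prefactor (not as an argument of $N$), the dilational rescaling only affects the argument of $N$, and the relation $\stackrel{d}{\asymp}$ is preserved. I do not expect any real obstacle here: the work has been done in Proposition~\ref{pro8.6}, and the corollary merely observes that the upper and lower envelopes coincide (in the sense of $\stackrel{d}{\asymp}$) precisely when $p(t)$ and $N(1/t)$ do.
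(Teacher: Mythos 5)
Your proposal is correct and is exactly the intended argument: the paper gives no separate proof of Corollary \ref{cor8.7}, leaving it as the immediate combination of the lower bound (\ref{equat8.12}), the upper bound (\ref{equat8.13}), and the hypothesis $p(t)\stackrel{d}{\asymp}N(1/t)$ applied at $t+\rho$. Your remark that the dilation constants only rescale the prefactor $t/(t+\rho)$ by a multiplicative constant (absorbed into the $\stackrel{d}{\asymp}$ relation) correctly disposes of the only point needing care.
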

\begin{proof}[Proof of Proposition \ref{pro8.6}(1):] The relation (\ref{equat8.9})
implies the following two inequalities:
\begin{align*} 
h(t;x,y)\stackrel{d}{\leq} N\left(\frac{1}{1+\rho}\right)\left(1-e^{-\frac{t}{1+\rho}}\right) \leq \frac{t}{1+\rho}N\left(\frac{1}{1+\rho}\right)
\end{align*}
and
\begin{align*} 
h(t;x,y) &\stackrel{d}{\geq} t \int_{\frac{1}{2(1+\rho)}}^{\frac{1}{1+\rho}} N(\lambda)e^{-t\lambda}d\lambda \geq \frac{t}{2(1+\rho)}N\left(\frac{1}{2(1+\rho)}\right)e^{-\frac{t}{1+\rho}} \geq \\
& \geq \frac{t}{2e(1+\rho)}N\left(\frac{1}{2(1+\rho)}\right).
\end{align*}
These inequalities show that the first statement holds true.\\
\textit{ Proof of Proposition \ref{pro8.6}(2):}
Observe that for all $x,y\in G$ and $t\geq 1$,
\begin{align*} 
h(t;x,y)\leq h(t;e,e)=p(t).
\end{align*}
On the other hand, since $\frac{t}{1+\rho}>1$, we must have $\frac{1}{1+\rho}>\frac{1}{t}$. Hence, for such $t> 1$, $x,y\in G$ and for some $\delta>1$ we obtain
\begin{align*} 
h(t;x,y) &{\geq} t \int_{0}^{\frac{1}{t}} N(\lambda)e^{-t\delta \lambda}d\lambda \geq e^{-\delta}t \int_{0}^{\frac{1}{t}} N(\lambda ) d\lambda \geq \\
&\geq e^{-\delta}t \int_{\frac{1}{2t}}^{\frac{1}{t}} N(\lambda ) d\lambda \geq \frac{1}{2} e^{-\delta} N\left( \frac{1}{2t}\right).
\end{align*}
Finally, (\ref{equat8.12}) and (\ref{equat8.13}) follow from (\ref{equat8.10}) and (\ref{equat8.11}). Indeed, depending on whether the quantity $\frac{t}{1+\rho}$ is less or equal than one, or greater than one, we have the following inequalities.

(1) If $\frac{t}{1+\rho}\leq 1$, then
\begin{align*} 
h(t;x,y) \stackrel{d}{\geq}\frac{t}{1+\rho}N\left(\frac{1}{1+\rho}\right) \geq \frac{t}{t+\rho}N\left(\frac{1}{t+\rho}\right). 
\end{align*}
By assumption $t\leq 1+\rho$, hence
\begin{align*} 
t+\rho \leq 2(1+\rho).
\end{align*}
It follows that
\begin{align*} 
\frac{t}{t+\rho} \geq \frac{1}{2}\frac{t}{1+\rho},
\end{align*}
consequently, by (\ref{equat8.8})
\begin{align*} 
\frac{t}{t+\rho} p(t+\rho) &\geq \frac{t}{2(1+\rho)} p(2(1+\rho))\geq\\ &\geq \frac{(1-\sigma(0))^2}{4e^2}\frac{t}{1+\rho}N\left(\frac{1}{2(1+\rho)}\right) \stackrel{d}{\asymp} h(t;x,y).
\end{align*}

(2) If $\frac{t}{1+\rho}> 1$, then
\begin{align*} 
h(t;x,y) \stackrel{d}{\geq}N\left(\frac{1}{t}\right) \geq \frac{t}{t+\rho}N\left(\frac{1}{t}\right)\geq \frac{t}{t+\rho}N\left(\frac{1}{t+\rho}\right) 
\end{align*}
and
\begin{align*} 
\frac{t}{t+\rho} p(t+\rho) \geq \frac{1+\rho}{1+2\rho} p(2t) \geq \frac{1}{2} p(2t) \stackrel{d}{\asymp}p(t) \geq h(t;x,y).
\end{align*}
The proof is finished.
\end{proof}
\begin{exa}\label{ex8.9}\textrm{ }\\
1) Let $G=\mathbb{Z}(2)^{(\infty)}$ and $\sigma(k) \asymp q^k$, $0<q<1$. According to Examples \ref{ex2}(1.1) and \ref{ex3}(1.1)
\begin{align*} 
N(\lambda)\asymp \lambda^{\frac{1}{\alpha}} \qquad \textrm{at }\quad 0,
\end{align*}
and 
\begin{align*} 
p(t)\asymp t^{-\frac{1}{\alpha}} \qquad \textrm{at } \quad \infty,
\end{align*}
where $\alpha=\log \frac{1}{q}/ \log 2$. Thus Corollary \ref{cor8.7} applies and we obtain
\begin{align}\label{eq8.15}
h(t;x,y)\asymp \frac{t}{(t+\rho(x,y))^{1+\frac{1}{\alpha}}}. 
\end{align}
2) Let $G=S_{\infty}$ and $\sigma(k)\asymp (k!)^{-\gamma}$, for some $\gamma>0$. According to Example \ref{ex2}(2b), for any $\epsilon>0$, there exist $c_1, c_2>0$ such that
\begin{align*} 
c_2 \lambda^{\frac{1}{\gamma}+\epsilon} \leq N(\lambda) \leq c_1 \lambda^{\frac{1}{\gamma}} \qquad \textrm{at} \quad 0,
\end{align*}
and similarly, for some $c_3, c_4>0$ 
\begin{align*} 
c_3 t^{-\left(\frac{1}{\gamma}+\epsilon\right)} \leq p(t) \leq c_4 t^{-\frac{1}{\gamma}} \qquad \textrm{at} \quad \infty. 
\end{align*}
Applying Proposition \ref{pro8.6} we obtain
\begin{align}\label{eq8.16}
h(t;x,y)\leq \frac{c_5t}{(t+\rho(x,y))^{1+\frac{1}{\gamma}}}. 
\end{align}
and
\begin{align}\label{eq8.17}
h(t;x,y)\geq \frac{c_6t}{(t+\rho(x,y))^{1+\frac{1}{\gamma}+\epsilon}}. 
\end{align}
for some $c_5, c_6>0$ and all $x,y \in G$, $t \geq 1$.
\end{exa} 
\begin{rem}\label{rem8.10}
$1$. The heat kernel bounds (\ref{equat8.12}) and (\ref{equat8.13}) given in Proposition \ref{pro8.6} are optimal. Indeed, if $t\geq 1$ is bounded and $\rho=\rho(x,y)\to \infty$, (\ref{equat8.10}) yields
\begin{align*}
h(t;x,y) \stackrel{d}{\asymp}\frac{t}{t+\rho}N\left(\frac{1}{t+\rho}\right).
\end{align*}
On the other hand, if $\rho=\rho(x,y)$ is bounded and $t\to \infty$,
\begin{align*}
h(t;x,y) \leq p(t),
\end{align*}
and by (\ref{equat8.7}), 
\begin{align*}
h(t;x,y) = p(t)-t \int_{\frac{1}{1+\rho}}^1 N(\lambda)(1-\lambda)^{t-1}d\lambda\geq p(t)-\left(1-\frac{1}{1+\rho}\right)^t\sim p(t).
\end{align*}
Thus, in this case,
\begin{align*}
h(t;x,y) \sim p(t) \stackrel{d}{\asymp} \frac{t}{t+\rho}p(t+\rho).
\end{align*}
$2$. Let $h^{\beta}(t;x,y)$ be the heat kernel associated with the symmetric stable process in $\mathbb{R}^d$ of index $0<\beta < 2$. According to \cite{Blu} (see also \cite{Ben2})
\begin{align*}
h^{\beta}(t;x,y)\asymp \frac{t}{\left( t^{1/\beta} +|x-y|   \right)^{\beta +d}}.
\end{align*}
The relations (\ref{eq8.15}) and (\ref{eq8.16})-(\ref{eq8.17}) show that if $\sigma(k)\asymp |G_k|^{-\gamma}$ the heat kernel on the group $\mathbb{Z}(2)^{(\infty)}$ (resp. $S_{\infty}$) has a shape similar to that of the $1$-stable law of ``dimension'' $d=\frac{1}{\alpha}$, (resp. $d=\frac{1}{\gamma} + \epsilon$).
\end{rem}

\begin{xack}
This paper was started at Bielefeld University (SFB 701). The authors express their gratitude to A.~Grigor'yan for his kind invitation and encouragement. Part of the paper was written
during stays at the Laboratoire Poncelet in Moscow (UMI 2615) and at the
Steklov Mathematical Institute. We are grateful to M. Tsfasman and to A. Shiryaev for their invitations. We thank A.~Erschler, Y.~Guivarc'h, V.~Kaimanovich, L.~Saloff-Coste, R.~Shah, A.~Vershik, and W.~Woess, for fruitful discussions and valuable comments. 
\end{xack}
\begin{bibdiv}
\begin{biblist}
	
	\bib{A}{article}{
	   author={Bakry, D.},
	   author={Coulhon, T.},
	   author={Ledoux, M.},
	   author={Saloff-Coste, L.},
	   title={Sobolev inequalities in disguise},
	   journal={Indiana Univ. Math. J.},
	   volume={44},
	   date={1995},
	   number={4},
	   pages={1033--1074},
	}

	\bib{BB}{article}{
	   author={Bendikov, Alexander},
	   author={Bobikau, Barbara},
	   title={Long time behavior of random walks on abelian groups},
	   journal={Colloq. Math.},
	   volume={118},
	   date={2010},
	   number={2},
	   pages={445--464},
	   }
	
	\bib{Ben2}{article}{
	   author={Bendikov, A.},
	   title={Asymptotic formulas for symmetric stable semigroups},
	   journal={Exposition. Math.},
	   volume={12},
	   date={1994},
	   number={4},
	   pages={381--384},
	}
	
	\bib{Ben}{article}{
	   author={Bendikov, Alexander},
	   author={Coulhon, Thierry},
	   author={Saloff-Coste, Laurent},
	   title={Ultracontractivity and embedding into $L\sp \infty$},
	   journal={Math. Ann.},
	   volume={337},
	   date={2007},
	   number={4},
	   pages={817--853},
	}

	\bib{BPSa}{article}{
	   author={Bendikov, Alexander},
	   author={Pittet, Christophe},
	   author={Sauer, Roman},
	   title={Spectral distribution and $L^2$-isoperimetric profile of Laplace operators on groups},
		journal={Math. Ann. (to appear)}

	}

	\bib{BenSC}{article}{
	   author={Bendikov, Alexander},
	   author={Saloff-Coste, Laurent},
	   title={Random walks on groups and discrete subordinations},
		status={preprint}

	}

	\bib{Blu}{article}{
	   author={Blumenthal, R. M.},
	   author={Getoor, R. K.},
	   title={Some theorems on stable processes},
	   journal={Trans. Amer. Math. Soc.},
	   volume={95},
	   date={1960},
	   pages={263--273},
}

\bib{Ber}{article}{
   author={Berg, Christian},
   author={Christensen, Jens Peter Reus},
   title={Sur la norme des op\'erateurs de convolution},
   language={French},
   journal={Invent. Math.},
   volume={23},
   date={1974},
   pages={173--178},
}

\bib{Bfo}{book}{
   author={Berg, Christian},
   author={Forst, Gunnar},
   title={Potential theory on locally compact abelian groups},
   note={Ergebnisse der Mathematik und ihrer Grenzgebiete, Band 87},
   publisher={Springer-Verlag},
   place={New York},
   date={1975},
   pages={vii+197},
}
	\bib{BGT}{book}{
	   author={Bingham, N. H.},
	   author={Goldie, C. M.},
	   author={Teugels, J. L.},
	   title={Regular variation},
	   series={Encyclopedia of Mathematics and its Applications},
	   volume={27},
	   publisher={Cambridge University Press},
	   place={Cambridge},
	   date={1989},
	}

	\bib{Bro}{article}{
	   author={Brofferio, Sara},
	   author={Woess, Wolfgang},
	   title={On transience of card shuffling},
	   journal={Proc. Amer. Math. Soc.},
	   volume={129},
	   date={2001},
	   number={5},
	   pages={1513--1519},
}
\bib{D}{article}{
   author={Carron, Gilles},
   title={In\'egalit\'es isop\'erim\'etriques de Faber-Krahn et
   cons\'equences},
   language={French, with English and French summaries},
   conference={
      title={Actes de la Table Ronde de G\'eom\'etrie Diff\'erentielle
      (Luminy, 1992)},
   },
   book={
      series={S\'emin. Congr.},
      volume={1},
      publisher={Soc. Math. France},
      place={Paris},
   },
   date={1996},
   pages={205--232},
}

\bib{E}{article}{
   author={Coulhon, Thierry},
   title={Ultracontractivity and Nash type inequalities},
   journal={J. Funct. Anal.},
   volume={141},
   date={1996},
   number={2},
   pages={510--539},
}
\bib{H}{article}{
   author={Coulhon, Thierry},
   author={Grigor{\cprime}yan, Alexander},
   author={Levin, Daniel},
   title={On isoperimetric profiles of product spaces},
   journal={Comm. Anal. Geom.},
   volume={11},
   date={2003},
   number={1},
   pages={85--120},
}

\bib{F}{article}{
   author={Coulhon, Thierry},
   author={Grigor'yan, Alexander},
   author={Pittet, Christophe},
   title={A geometric approach to on-diagonal heat kernel lower bounds on
   groups},
   language={English, with English and French summaries},
   journal={Ann. Inst. Fourier (Grenoble)},
   volume={51},
   date={2001},
   number={6},
   pages={1763--1827},
}

\bib{Gui}{article}{
   author={Dani, S. G.},
   author={Guivarc'h, Y.},
   author={Shah, R.},
   title={On the embedding problem for probability measures on Lie groups},
	status={preprint}

}

\bib{Dar}{article}{
   author={Darling, D. A.},
   author={Erd{\H{o}}s, P.},
   title={On the recurrence of a certain chain},
   journal={Proc. Amer. Math. Soc.},
   volume={19},
   date={1968},
   pages={336--338},
}

\bib{Dud}{article}{
   author={Dudley, R. M.},
   title={Random walks on abelian groups},
   journal={Proc. Amer. Math. Soc.},
   volume={13},
   date={1962},
   pages={447--450},
}

\bib{Dod}{article}{
	   author={Dodziuk, Jozef},
	   title={Difference equations, isoperimetric inequality and transience of
	   certain random walks},
	   journal={Trans. Amer. Math. Soc.},
	   volume={284},
	   date={1984},
	   number={2},
	   pages={787--794},
	}
	\bib{Dyn}{book}{
	   author={Dynkin, Evgenii B.},
	   author={Yushkevich, Aleksandr A.},
	   title={Markov processes: Theorems and problems},
	   series={Translated from the Russian by James S. Wood},
	   publisher={Plenum Press},
	   place={New York},
	   date={1969},
	   pages={x+237},
}

\bib{Ers}{article}{
   author={Erschler, Anna},
   title={Isoperimetry for wreath products of Markov chains and multiplicity
   of selfintersections of random walks},
   journal={Probab. Theory Related Fields},
   volume={136},
   date={2006},
   number={4},
   pages={560--586},
}

\bib{Fel}{book}{
   author={Feller, William},
   title={An introduction to probability theory and its applications. Vol.
   I},
   series={Third edition},
   publisher={John Wiley \& Sons Inc.},
   place={New York},
   date={1968},
   pages={xviii+509},
}
\bib{Fel2}{book}{
   author={Feller, William},
   title={An introduction to probability theory and its applications. Vol.
   II. },
   series={Second edition},
   publisher={John Wiley \& Sons Inc.},
   place={New York},
   date={1971},
   pages={xxiv+669},
}
\bib{Fla}{article}{
   author={Flatto, Leopold},
   author={Pitt, Joel},
   title={Recurrence criteria for random walks on countable Abelian groups},
   journal={Illinois J. Math.},
   volume={18},
   date={1974},
   pages={1--19},
}

\bib{GriTheHeat}{article}{
   author={Grigor{\cprime}yan, A. A.},
   title={The heat equation on noncompact Riemannian manifolds},
   language={Russian},
   journal={Mat. Sb.},
   volume={182},
   date={1991},
   number={1},
   pages={55--87},
   issn={0368-8666},
   translation={
      journal={Math. USSR-Sb.},
      volume={72},
      date={1992},
      number={1},
      pages={47--77},
}}

\bib{GriHeatKernel}{article}{author={Grigor{\cprime}yan, Alexander},
	   title={Heat kernel upper bounds on a complete non-compact manifold},
	   journal={Rev. Mat. Iberoamericana},
	   volume={10},
	   date={1994},
	   number={2},
	   pages={395--452},
}

	\bib{K2}{article}{
	   author={Gromov, M.},
	   title={Asymptotic invariants of infinite groups},
	   conference={
	      title={Geometric group theory, Vol.\ 2},
	      address={Sussex},
	      date={1991},
	   },
	   book={
	      series={London Math. Soc. Lecture Note Ser.},
	      volume={182},
	      publisher={Cambridge Univ. Press},
	      place={Cambridge},
	   },
	   date={1993},
	   pages={1--295},
	}

	\bib{K1}{article}{
	    author={Gromov, Mikhail},
	    author={Shubin, Mikhail},
	     title={von Neumann spectra near zero},
	   journal={Geom. Funct. Anal},
	    volume={1},
	      date={1991},
	    number={4},
	     pages={375--404},
	      issn={},
	 }
	\bib{Hew1}{book}{
	   author={Hewitt, Edwin},
	   author={Ross, Kenneth A.},
	   title={Abstract harmonic analysis. Vol. I},
	   series={Grundlehren der Mathematischen Wissenschaften [Fundamental
	   Principles of Mathematical Sciences]},
	   volume={115},
	   edition={2},
	   note={Structure of topological groups, integration theory, group
	   representations},
	   publisher={Springer-Verlag},
	   place={Berlin},
	   date={1979},
	   pages={ix+519},
	}
	
	\bib{Hey}{book}{
	   author={Heyer, Herbert},
	   title={Probability measures on locally compact groups},
	   note={Ergebnisse der Mathematik und ihrer Grenzgebiete, Band 94},
	   publisher={Springer-Verlag},
	   place={Berlin},
	   date={1977},
	   pages={x+531},
}

\bib{L}{book}{
   author={Jacob, N.},
   title={Pseudo differential operators and Markov processes. Vol. I},
   note={Fourier analysis and semigroups},
   publisher={Imperial College Press},
   place={London},
   date={2001},
   pages={xxii+493},
}	

\bib{Kai}{article}{
   author={Ka{\u\i}manovich, V. A.},
   title={Examples of nonabelian discrete groups with nontrivial exit
   boundary},
   language={Russian, with English summary},
   note={Differential geometry, Lie groups and mechanics, V},
   journal={Zap. Nauchn. Sem. Leningrad. Otdel. Mat. Inst. Steklov. (LOMI)},
   volume={123},
   date={1983},
   pages={167--184},
}

\bib{Kas}{article}{
   author={Kasymdzhanova, M. A.},
   title={Recurrence of invariant Markov chains on a class of abelian
   groups},
   language={Russian, with English summary},
   journal={Vestnik Moskov. Univ. Ser. I Mat. Mekh.},
   date={1981},
   number={3},
   pages={3--7, 80},
}

	\bib{kaimanovich-vershik}{article}{
	   author={Ka{\u\i}manovich, Vadim A.},
	   author={Vershik, Anatoly M.},
	   title={Random walks on discrete groups: boundary and entropy},
	   journal={Ann. Probab.},
	   volume={11},
	   date={1983},
	   number={3},
	   pages={457--490},
	   issn={0091-1798},
	}
	\bib{Kes}{article}{
	   author={Kesten, Harry},
	   title={Full Banach mean values on countable groups},
	   journal={Math. Scand.},
	   volume={7},
	   date={1959},
	   pages={146--156},
}
\bib{Law}{article}{
   author={Lawler, Gregory F.},
   title={Recurrence and transience for a card shuffling model},
   journal={Combin. Probab. Comput.},
   volume={4},
   date={1995},
   number={2},
   pages={133--142},
}
\bib{Lax}{book}{
   author={Lax, Peter D.},
   title={Functional analysis},
   series={Pure and Applied Mathematics (New York)},
   publisher={Wiley-Interscience [John Wiley \& Sons]},
   place={New York},
   date={2002},
   pages={xx+580},
}
	\bib{Lue}{book}{
	    author={L{\"u}ck, Wolfgang},
	     title={$L\sp 2$-invariants: theory and applications to geometry and
	            $K$-theory},
	     series={Ergebnisse der Mathematik und ihrer Grenzgebiete. 3. Folge. A
	            Series of Modern Surveys in Mathematics [Results in Mathematics
	            and Related Areas. 3rd Series. A Series of Modern Surveys in
	            Mathematics]},
	     volume={44},
	 publisher={Springer-Verlag},
	     place={Berlin},
	      date={2002},
	     pages={xvi+595},
	}
	\bib{Mcc}{article}{
	   author={McCrudden, Mick},
	   title={The embedding problem for probabilities on locally compact groups},
	   conference={
	      title={Probability measures on groups: recent directions and trends},
	   },
	   book={
	      publisher={Tata Inst. Fund. Res.},
	      place={Mumbai},
	   },
	   date={2006},
	   pages={331--363},
}
\bib{Mol}{article}{
   author={Fere{\u\i}g, Nabil{\cprime}},
   author={Mol{\v{c}}anov, S. A.},
   title={Random walks on abelian groups with an infinite number of
   generators},
   language={Russian, with English summary},
   journal={Vestnik Moskov. Univ. Ser. I Mat. Mekh.},
   date={1978},
   number={5},
   pages={22--29},
}

	\bib{ChP}{article}{
	   author={Pittet, Christophe},
	   author={Saloff-Coste, Laurent},
	   title={Amenable groups, isoperimetric profiles and random walks},
	   conference={
	      title={Geometric group theory down under},
	      address={Canberra},
	      date={1996},
	   },
	   book={
	      publisher={de Gruyter},
	      place={Berlin},
	   },
	   date={1999},
	   pages={293--316},
	}

	\bib{PitSal}{article}{
	   author={Pittet, Ch.},
	   author={Saloff-Coste, L.},
	   title={Random walks on finite rank solvable groups},
	   journal={J. Eur. Math. Soc. (JEMS)},
	   volume={5},
	   date={2003},
	   number={4},
	   pages={313--342},
	}

	\bib{ChP1}{article}{
	    author={Pittet, Christophe},
	     author={Saloff-Coste, Laurent},
	     title={On the stability of the behavior of random walks on groups},
	   journal={J. Geom. Anal.},
	    volume={10},
	      date={2000},
	    number={4},
	     pages={713--737},
	      issn={},
	}
	\bib{Rev}{book}{
	   author={Revuz, D.},
	   title={Markov chains},
	   series={North-Holland Mathematical Library},
	   volume={11},
	   edition={2},
	   publisher={North-Holland Publishing Co.},
	   place={Amsterdam},
	   date={1984},
	   pages={xi+374},
	}

	\bib{Roc}{book}{
	   author={Rockafellar, R. Tyrrell},
	   title={Convex analysis},
	   series={Princeton Landmarks in Mathematics},
	   note={Reprint of the 1970 original;
	   Princeton Paperbacks},
	   publisher={Princeton University Press},
	   place={Princeton, NJ},
	   date={1997},
	}
	\bib{Sal}{article}{
	   author={Saloff-Coste, Laurent},
	   title={Sur la d\'ecroissance des puissances de convolution sur les
	   groupes},
	   language={French, with English summary},
	   journal={Bull. Sci. Math. (2)},
	   volume={113},
	   date={1989},
	   number={1},
	   pages={3--21},
	}
	\bib{Shi}{book}{
	   author={Shiryaev, A. N.},
	   title={Probability},
	   series={Graduate Texts in Mathematics},
	   volume={95},
	   edition={2},
	   note={Translated from the first (1980) Russian edition by R. P. Boas},
	   publisher={Springer-Verlag},
	   place={New York},
	   date={1996},
	   pages={xvi+623},}
	
	\bib{Spi}{book}{
	   author={Spitzer, Frank},
	   title={Principles of random walks},
	   edition={2},
	   note={Graduate Texts in Mathematics, Vol. 34},
	   publisher={Springer-Verlag},
	   place={New York},
	   date={1976},
	   pages={xiii+408},
}

\end{biblist}
\end{bibdiv}
\end{document}